\setlist[enumerate]{itemsep=0mm}
\newcommand{\R}{\mathbb{R}}
\newcommand{\N}{\mathbb{N}}
\newcommand{\change}[1]{{\color{black}#1}}
\newcommand{\es}[1]{{\color{olive}#1}}
\newcommand{\moha}[1]{{\color{orange}#1}}
\newcommand{\BR}{B-rex}
\newcommand{\Ball}{\mathcal{B}}
\newcommand{\Binf}{\mathcal{B}_{\infty}}
  {\end{list}}
  \gdef\listctr{list\romannumeral\the\@listdepth}\expandafter
\newcommand{\ve}[1]{\mathbf{#1}}
\def\x{\ve{x}}
\def\y{\ve{y}} 
\def\u{\ve{u}}
\def\z{\ve{z}}
\def\e{\ve{e}}
\def\v{\ve{v}}
\def\a{\ve{a}}
\def\d{\ve{d}}
\def\A{\ve{A}}
\def\Cc{\mathcal{C}}
\def\R{\mathbb R}
\def\prox{{\mbox{prox}}}
\def\prox{\mathrm{prox}} \def\arg{\mathrm{arg}}
\newcommand{\argmin}{\operatornamewithlimits{argmin}}
\newcommand{\indic}{\mathds{1}}
\newcommand{\reg}{$\ell_0\ell_2$}
\newcommand{\prx}{\operatorname{prox}_{\rho B_\Psi}}
\newtheorem{theorem}{Theorem}
\newtheorem{proposition}[theorem]{Proposition}%
\newtheorem{example}{Example}%
\newtheorem{remark}{Remark}%
\newtheorem{lemma}{Lemma}
\newtheorem{coro}{Corollary}
\newtheorem{definition}{Definition}%
\newtheorem{assump}{Assumption}
\begin{document}

\title[Article Title]{Exact continuous relaxations of $\ell_0$-regularized criteria with non-quadratic data terms}


\author*[1]{\fnm{M'hamed} \sur{Essafri}}\email{mhamed.essafri@irit.fr}

\author[2]{\fnm{Luca} \sur{Calatroni}}\email{luca.calatroni@unige.it}

\author[1]{\fnm{Emmanuel} \sur{Soubies}}\email{emmanuel.soubies@irit.fr}

\affil[1]{IRIT, Université de Toulouse, CNRS, Toulouse, France}
\affil[2]{\orgdiv{MaLGa Center, DIBRIS}, \orgname{Università degli studi di Genova \&  MMS, Istituto Italiano di Tecnologia}, \city{Genoa}, \country{Italy}}


\abstract{
We consider the minimization of $\ell_0$-regularized criteria involving non-quadratic data terms such as the Kullback-Leibler divergence and the logistic regression, possibly combined with an $\ell_2$ regularization. We first prove the existence of global minimizers for such problems and characterize their local minimizers. Then, we propose a new class of continuous relaxations of the $\ell_0$ pseudo-norm, termed as  $\ell_0$ Bregman Relaxations (\BR{}). They are defined in terms of suitable Bregman distances and
lead to  \emph{exact} continuous relaxations of the original $\ell_0$-regularized problem in the sense that they do not alter its set of global minimizers and reduce its non-convexity by eliminating certain local minimizers.  Both features  make such relaxed problems more amenable to be solved by standard non-convex optimization algorithms. 
In this spirit, we consider the proximal gradient algorithm  
and provide explicit computation of proximal points for the \BR{} penalty in several cases.
Finally, we report a set of numerical results illustrating the geometrical behavior of the proposed \BR{} penalty for different choices of the underlying Bregman distance, its relation with convex envelopes, as well as its exact relaxation properties in 1D/2D and higher dimensions.
}

\keywords{$\ell_0$ regularization, non-convex/non-smooth optimization, Bregman distances, proximal gradient algorithm, exact relaxations, convex envelopes.}


\maketitle
\section{Introduction}
Sparse models have attracted extensive attention over the last decades due to their importance in various fields such as statistics, computer vision, signal/image processing and machine learning. 
The main goal in sparse optimization is to compute a sparse solution with only few representative variables. This is crucial in many real-world scenarios, where high-dimensional and often redundant data are either too large or too noisy to be effectively processed as a whole.

In this paper, we consider \reg-regularized minimization
problems of the form:
\begin{equation}\label{eq:problem_setting}
    \hat{\x} \in \argmin_{\x \in \Cc^N} \; J_0(\x)  \; \text{ with } \; J_0(\x) := F_\y(\A \x)  +\lambda_0 \|\x\|_0+\frac{\lambda_2}{2}\|\x\|^2_2
    ,
\end{equation}
where $\A \in \R^{M \times N}$ is (usually) an underdetermined matrix ($M \ll N$), $\y \in \mathcal{Y}^M$ is the vector of observations, and $\lambda_0>0$ and $\lambda_2 \geq 0$ are two hyperparameters controlling respectively the strengths of the $\ell_0$ sparsity-promoting term and the $\ell_2$ ridge regularization term. The case $\lambda_2=0$ corresponds to a pure sparsity-promoting model while choosing $\lambda_2>0$ can also be of interest from a statistical viewpoint~\cite{hazimeh2022sparse}.
The set  $\Cc \subseteq \R$ is a constraint set which we consider to be either $\Cc = \R$ (unconstrained minimization) or $\Cc = \R_{\geq 0}$ (non-negativity constraint). Finally, the term {$\|\cdot\|^2_2$ denotes the standard squared $\ell_2$ norm}, and $\|\cdot\|_0$ stands for the $\ell_0$ pseudo-norm defined on $\mathbb{R}^N$ which counts the non-zero components of its arguments, that is
\begin{equation}
    \|\x\|_0 := \sharp\left\lbrace
    x_n , \; n \in [N] \; : x_n \neq 0 
             \right\rbrace
             ,
\end{equation}
where~$\sharp$ denotes cardinality and $[N] = \{1, \ldots, N\}$.

The data fidelity function  $F_\y : \R^M \to \R_{\geq 0}$ is a measure of fit between the model $\A\x$ and the data $\y$. In the context of signal and image processing, for instance,  following a classical Bayesian paradigm, its  form depends on the noise statistics assumed on the data, see, e.g., \cite{Stuart_2010}. In this work, we make the following assumption on $F_\y$.

\begin{assump}\label{assump}
The data fidelity function $F_\y$ is coordinate-wise separable, i.e. $F_\y(\z) =  \sum_{m=1}^M f(z_m; y_m)$, where for each $y \in \mathcal{Y}$, $f(\cdot; y)$ is strictly convex, proper, twice differentiable on $\mathrm{int}(\Cc)$ and bounded from below. 
Note that, to simplify notations, we define $f'(x;y) = (f(\cdot;y))'(x)$ and $f''(x;y) = (f(\cdot;y))''(x)$.
\end{assump} 

The exemplar data fidelity terms considered in this paper are reported in Table~\ref{tab:data_fidelty}. 
The \change{fifth} column refers to the constraint imposed on the ridge parameter $\lambda_2$. Note that the need for $\lambda_2>0$ arises solely for the logistic regression data fidelity in order to ensure the existence of global minimizers (see Section~\ref{sec:existance}). Finally, the last column defines the constraint set $\Cc$ ensuring that the problem is  properly defined.
\begin{table}[t]
    \renewcommand{\arraystretch}{2} 
    \centering
    \begin{tabular}{|c|c|c|c|c|c|}
    \hline
       \cellcolor[HTML]{FFFFC7} \textbf{Data terms} & \cellcolor[HTML]{FFFFC7}   $F_\y(\A\x)$ & \cellcolor[HTML]{FFFFC7}   $\A$ & \cellcolor[HTML]{FFFFC7}   $\mathcal{Y}$
       & \cellcolor[HTML]{FFFFC7} 
 $\lambda_2$ & \cellcolor[HTML]{FFFFC7}  $\Cc$\\ \hline
       Least squares (LS) & $\displaystyle \frac{1}{2}\sum_{m=1}^M\left([\A\x]_m- y_m\right)^2$ &
       $\R^{M \times N}$
&
       $\R$
       &
       $ \geq 0$ & $\R$ \\ \hline 
        Logistic regression (LR) & $\displaystyle \sum_{m=1}^M\log \left( 1 + e^{[\A\x]_m}\right)-y_m[\A\x]_m$ &
       $\R^{M \times N}$
&
        $\left\{0,1\right\}$
        &
        $ >0$ & $\R$ \\ \hline 
        Kullback-Leibler (KL) & $\displaystyle \sum_{m=1}^M [\A\x]_m+b-y_m\log\left([\A\x]_m+b\right)$, &
       $\R^{M \times N}_{\geq 0}$
&
        $\R_{\geq 0}$
        &
        $ \geq 0$ & $\R_{\geq 0} $ \\ \hline 
    \end{tabular}
    \caption{Some exemplar data fidelity terms considered in this work with minimal bounds on $\lambda_2$ to ensure existence of a global solution (see Theorem~\ref{th:existence-minimizers}) and the corresponding constraint sets $\Cc$  required to make Problem \eqref{eq:problem_setting} well defined. Note that for KL we should have $b > 0$.}
    \label{tab:data_fidelty}
\end{table}

Although the $\ell_0$ pseudo-norm is the natural measure of sparsity,  solving problems like \eqref{eq:problem_setting} 
 is known to be a NP-hard task~\cite{Natarajan1995-np-hard,nguyen-np-hard}. Also, being the $\ell_0$ pseudo-norm a non-convex function discontinuous at the origin, the design of optimization algorithms tailored to \eqref{eq:problem_setting} is challenging. 

 In the following section, we provide an overview of the  previous work addressing some of the main challenges encountered in both the modeling and the numerical optimization of problems analogous to \eqref{eq:problem_setting}.

\paragraph{Related works} 
There exists a vast literature dedicated to the aforementioned challenges encountered when dealing with pure $\ell_0$-minimization problems (i.e., $\lambda_2=0$)~\cite{tillmann2024cardinality}.  First, note that Problem~\eqref{eq:problem_setting} can be seen as a mixed binary integer program. The two standard ways for showing this correspond to express $\|\x\|_0 = \sum_{n \in [N]}z_n$ with $\z \in \{0,1\}^N$ along with some constraints either in the form $\x (1 - \z) = \mathbf{0}$ (with component-wise products) or $\x \leq \z M$ (big-M constraint). For moderate-size problems involving hundreds of variables, such mixed-integer problems can be solved exactly via branch-and-bound algorithms at a reasonable computational cost~\cite{bourguignon2015exact,bertsimas2016best,guyard2022node,delle2023novel}. Alternatively, other approaches (see, e.g., \cite{frangioni2006perspective,gunluk2010perspective,wei2022ideal,convexifications2023, wei2024convex,rank2024}) provide good convex relaxations of the set of constraints involved in such problems for their solution.

Other approaches, known in the literature under the name of greedy algorithms, share the common idea of computing solutions of $\ell_0$-regularized least-squares criteria by iteratively  modifying the support of the solution (e.g., adding, removing or swapping components)  according to a given rule. Examples include matching pursuit (MP)~\cite{Mallat}, orthogonal matching pursuit (OMP)~\cite{Pati1993Orthogonal},  single best replacement (SBR)~\cite{Soussen2011FromBD}, or the CowS algorithm proposed in~\cite{beck2018proximal}. While the use of sophisticated update rules (with increased combinatorics) can ensure the convergence to local minimizers verifying more restrictive necessary optimally conditions~\cite{beck2018proximal,soubies2020new}, this can significantly increase the computational cost.   


Another popular strategy aims at defining relaxations of the original Problem~\eqref{eq:problem_setting} through the replacement of the $\ell_0$ term by a  continuous approximation. Replacing the $\ell_0$ pseudo-norm by the convex and continuous $\ell_1$ norm is the most popular approach as it allows one to leverage efficient convex optimization tools~\cite{tibshirani1996regression}. Moreover,  the seminal works~\cite{Candes,donoho2006most} showed that under some conditions on the model operator $\A$, the $\ell_1$ relaxation enjoys the theoretical guarantees of retrieving solutions of the original $\ell_2$-$\ell_0$ problem. However, these conditions are based on randomness assumptions on $\A$ which are not met for many practical problems, such as classical imaging inverse problems where, for instance, $\A$ is a convolution operator. Non-convex continuous relaxations have thus been extensively studied as alternatives to the $\ell_1$ relaxation. They include (but are not limited to) the transformed-$\ell_1$~\cite{Nikolova2000LocalSH}, capped-\(\ell_1\)~\cite{zhang2008multi}, \(\ell_p\)-norms \((0<p<1)\)~\cite{Foucart2009SparsestSO}, \(\log\)-sum penalty~\cite{Cands2007EnhancingSB}, smoothed \(\ell_0\) penalty~\cite{Mohimani2008AFA}, smoothly clipped absolute deviation (SCAD)~\cite{Fan2001VariableSV}, minimax concave penalty (MCP)~\cite{Cun-Hui}, exponential approximation~\cite{Mangasarian1996}, ratio \(\ell_p/\ell_q\)~\cite{Repetti2015,cherni2020spoq} and reverse Huber penalty~\cite{Pilanci}.
Among this plethora of continuous non-convex relaxations of Problem~\eqref{eq:problem_setting}, one may wonder which one to choose in practice?

From a statistical perspective, according to the authors in~\cite{Fan01,antoniadis2001regularization}, a ``good'' penalty function should lead to an estimator which is  i)~unbiased when the true solution is large (to avoid unnecessary modelling bias),
ii)~a thresholding rule that enforces sparsity (to reduce model complexity), and iii)~continuous with respect to the data (to avoid instabilities in model prediction). Such properties are the root of MCP and SCAD penalties. 

From an optimization point of view, a series of works have proposed to tune the parameters of non-convex penalties so as to maintain the convexity of the whole relaxed objective function~\cite{selesnick2017sparse,lanza2022convex}. As such, one can leverage efficient convex optimization tools while keeping the relaxation ``closer'' to the initial Problem~\eqref{eq:problem_setting} than that obtained  with the $\ell_1$ relaxation. These can be seen as the initial convex relaxations of  graduated non-convexity approaches~\cite{nikolova1998inversion}.
A different idea consists in defining relaxations that ``reduce" the non-convexity of Problem~\eqref{eq:problem_setting} (in terms, e.g., of fewer local minimizers and wider basins of attraction) while preserving its solution(s), which are both appealing properties in the context of non-convex optimization. Relaxations with such properties are  referred to as \emph{exact} continuous relaxations. 

Early works on this topic date back to \cite{Bradley} where the authors proved that, for a certain class of functions \(F_\y\), the intersection between the solution set of the \(\ell_0\)-penalized criteria and the one of the relaxed criteria obtained with the exponential penalty~\cite{Mangasarian1996} is non-empty. Similar results were obtained later on with the \(\log\)-sum penalty~\cite{Rinaldi2010ConcavePF} and \(\ell_p\)-norms for \(p \leq 1\)~\cite{Fung2011EquivalenceOM}. From a  different perspective, asymptotic connections in terms of global minimizers have been shown for a class of smooth non-convex approximations of the \(\ell_0\) pseudo-norm~\cite{Chouzenoux2011AMS}. When the composite $\ell_0$-dependent functional to minimize possesses further structure, other type of approaches can be considered as well. For instance,  when the data fidelity term can be expressed as the difference of two convex functions (DC function) and \(\Cc\) is a polyhedral convex set, a family of continuous DC approximations has been proposed in~\cite{Thi2015} where a precise link between the resulting relaxation and the original problem is made: any minimizer of the relaxed problem lies in an \(\varepsilon\)-neighborhood of a minimizer of the initial problem. Moreover, under some assumptions on \(F_\y\), optimal solutions of the relaxed problem are included in those of the initial problem. A stronger result is shown therein also for the capped-\(\ell_1\) penalty (see also~\cite{LeThi2014FeatureSI}), for which global solutions of the relaxation exactly coincide with those of the initial functional. However, these last two results are limited to global solutions and do not hold for local minimizers, meaning that this relaxation can potentially add spurious local minimizers.

The authors in~\cite{soubies2017unified} defined a class of sparse penalties leading to exact relaxations of the $\ell_0$-regularized least-squares problem. Interestingly, as the inferior limit of this class of exact penalties, they retrieve a special instance of MCP, referred to as CEL0 (continuous exact $\ell_0$) that was initially analyzed in~\cite{Soubies2015}. Still in the context of least-squares problems, the author in~\cite{Carlsson2019} showed that the CEL0 relaxation can actually be obtained by computing the quadratic envelope (also known as proximal hull~\cite{rockafellar2009variational}) of \(\lambda_0\|\cdot\|_0\) defined by
\begin{equation}\label{eq:quadratic}
\mathcal{Q}_\gamma(\lambda_0 \|\cdot\|_0)(\x)=\sup _{\alpha \in \R, \z \in \R^N}\left\{\alpha-\frac{\gamma}{2}\|\x-\z\|_2^2: \alpha-\frac{\gamma}{2}\|\cdot-\z\|_2^2 \leq \lambda_0\|\cdot\|_0\right\}.
\end{equation}
Furthermore, it was shown in~\cite{Carlsson2019} that, for a range of \(\gamma\) values (i.e., \(\gamma>\|\A\|^2\)), the exact relaxations properties hold. 

The study of similar properties in the case of non-quadratic data fidelity terms is  less developed.
We are only aware of the works~\cite{liu2018equivalent,Weigeneral,Lazzaretti2021}.
In~\cite{liu2018equivalent}, the authors propose a class of MPEC (mathematical programs with equilibrium constraints)  exact reformulations. In~\cite{Weigeneral} the authors demonstrated that the capped-\(\ell_1\) penalty leads to an exact relaxation of \eqref{eq:problem_setting} whenever $F_\y$ is Lipschitz continuous. In~\cite{Lazzaretti2021} a weighted-CEL0 relaxation is defined for $\ell_0$-penalized problems coupled with a weighted-$\ell_2$ data term to model signal-dependent noise in fluorescence microscopy inverse problems. 

Finally, although the literature related to \reg-regularization  (i.e., $\lambda_2>0$) is less extensive, we can mention branch-and-bounds types methods~\cite{hazimeh2022sparse,LRhazim}, safe-screening rules~\cite{atamturk20a}, MPEC reformulations~\cite{zhang2023zero}, and exact relaxation properties of the capped-$\ell_1$ penalty for Lipschitz continuous data fidelity terms~\cite{li2022difference}.

\paragraph{Contributions and outline}
In this paper, we derive a new class of exact continuous relaxations for Problem~\eqref{eq:problem_setting}. It extends the previous works~\cite{Soubies2015,Carlsson2019} to problems involving non-quadratic data terms.
Our framework heavily relies  on the use of separable Bregman divergences $D_\Psi(\cdot,\cdot)$~\cite{Bregman1967,CensorZenios1997,Burger2016} generated by a family $\Psi=\left\lbrace \psi_n \right\rbrace_{n \in [N]}$ of strictly convex functions  $\psi_n: \Cc \to \R$, so as to replace the squared Euclidean distance in \eqref{eq:quadratic}. Specifically, we define the class of \textit{$\ell_0$ Bregman relaxations} (\BR{}) as 
\begin{equation}\label{eq:continuous_form} 
    B_\Psi(\x)=\sup _{\alpha \in \R} \sup_{\z \in \Cc^N}\left\{\alpha-D_\Psi(\x,\z): \alpha-D_\Psi(\cdot,\z) \leq \lambda_0\|\cdot\|_0\right\}
    .
\end{equation}
Then, we derive sufficient conditions  (independent on $\A$) on the family $\Psi$ such that the continuous relaxation of $J_0$ defined by  
\begin{equation}\label{eq:relaxation_Jpsi}
        J_\Psi(\x) = F_\y(\A \x) + B_\Psi(\x)+\frac{\lambda_2}{2} \|\x\|^2_2,
\end{equation}
is an exact relaxation of $J_0$ in the sense that the following two properties hold 
 \begin{align}
         & \argmin_{\x \in \Cc^N} J_\Psi(\x) = \argmin_{\x \in \Cc^N} J_0(\x) \label{pr:whiched_form_glb},
         \tag{P1}\\
         & \hat{\x} \text{ local (not global) minimizer of } J_\Psi \; \Rightarrow \; \hat{\x} \text{ local (not global) minimizer of } J_0 .\label{pr:whiched_form_local}
         \tag{P2}
\end{align}

    
In other words,  $J_\Psi$ preserves  global minimizers of $J_0$, can potentially remove some local minimizers of $J_0$, and do not add new local minimizers.

To establish such results, it is first necessary to ensure that Problem~\eqref{eq:problem_setting} is well-posed in the sense that its solution set is non-empty. 
We provide in Section~\ref{sec:existance} results on the existence of global minimizers of $J_0$. It turns out to be a special case of a more general result proved in Appendix~\ref{appendix:existence}, which entails the existence of global minimizers of $J_\Psi$ as well. A characterization of (strict) local minimizers of $J_0$ completes Section~\ref{sec:existance}.

In Section~\ref{sec:brex},  we introduce the \BR{} and demonstrate its exact relaxation properties. More precisely,
we provide sufficient conditions on $\psi_n$ to ensure the validity of both properties~\eqref{pr:whiched_form_glb} and~\eqref{pr:whiched_form_local}. Under these conditions, $J_\Psi$ is therefore an exact continuous (non-convex) relaxation of $J_0$ that lies below it. Moreover, we identify the local minimizers of $J_0$ that are eliminated by $J_\Psi$. Along with these results, we characterize, critical points as well as (strict) local minimizers of $J_\Psi$.

In Section~\ref{sec:choices}, we give insights on how to choose the generating functions $\Psi$. In particular, we draw connections between the relaxed functionals and convex envelopes in the simple case where $\A$ is diagonal. More specifically, we show that choosing $\psi_n(x)=f(x;y_n) + \frac{\lambda_2}{2} x^2$ (note that $M=N$) entails  that $J_\Psi$ is the l.s.c. convex envelope of $J_0$.

Section~\ref{sec:algo} focuses on the minimization of the relaxed functional $J_\Psi$ via the proximal gradient algorithm. We present a formula for the proximal operator of \BR{}, along with explicit expressions for different choices of Bregman distance.

To conclude, we present in Section \ref{sec:application}  numerical results in 1D/2D and higher dimensions covering a wide range of problems involving both least-squares data terms (as reference) and widely-used non-quadratic data terms (Kullback-Leibler divergence and logistic regression).

\paragraph{Notations}
We make use of the following notation:
\begin{itemize}
    \item $\mathbb{R}_{\geq 0} = \left\{x\in\mathbb{R} \; : \; x\geq 0\right\}$;
      \item $\Ball(\x;\varepsilon)$, the open ball of center $\x \in \R^N$ and radius $\varepsilon>0$.
    \item $\mathbf{I}\in\R^{N\times N}$, the identity matrix;
    \item $[N]=\left\lbrace 1,\dots,N\right\rbrace $;
    \item $\x^{(n)}=\left(x_1, \dots,x_{n-1},0,x_{n+1},\dots,x_N \right) \in \mathbb{R}^N$;
    \item $\sigma \left( \x \right) = \left\lbrace i \in [N]~:~x_i \neq 0\right\rbrace$, the support of $\x \in \mathbb{R}^N$;
    \item $\sharp \omega$ denotes the cardinality of the set $\omega \subseteq [N]$;
    \item $\A_\omega= \left( \a_{\omega[1]}, \dots, \a_{\omega[\sharp \omega]} \right) \in \mathbb{R}^{M \times \sharp \omega}$, the submatrix of $\A$ formed by selecting only the columns indexed by the elements of $\omega \subseteq [N]$;
    \item $\x_\omega = \left(x_{\omega[1]}, \ldots, x_{\omega[ \sharp\omega]}\right) \in \mathbb{R}^{\sharp \omega}$, the restriction of $\x \in \mathbb{R}^N$ to the entries indexed by the elements of $\omega \subseteq [N]$;
    \item $\e_n \in \R^N$, the unitary vector of the standard basis of $\R^N$.
    \item $\|\cdot\|=\|\cdot\|_2$, the $\ell_2$ norm.
  
\end{itemize}

\section{On the minimizers of $J_0$}\label{sec:existance}

In this section we prove the existence of a solution to Problem \eqref{eq:problem_setting} and characterize  local minimizers.

\subsection{Existence}  \label{sec:existence}

One observes that whenever $\lambda_2>0$, existence of a solution to~\eqref{eq:problem_setting}  trivially holds since the functional is lower semi-continuous and coercive. 
However, for pure sparsity problems (where $\lambda_2=0$), establishing existence results is more challenging since the functional $J_0$ may not be coercive. To address this challenge, we exploit the notion of asymptotically level stable functions~\cite{AuslenderTeboulle} in the case where $F_\y$ is coercive (note that $F_\y(\A\cdot)$ is not necessary coercive). Existence results for the $\ell_0$-regularized least-squares problem have been established by Nikolova in~\cite{nikolova2013description} using this notion. Here, we provide a general proof ensuring the existence of a solution for other types of data terms under the mild Assumption~\ref{assump}. 
\begin{theorem}[Existence of solutions to~\eqref{eq:problem_setting}] \label{th:existence-minimizers}
Let $F_\y$ be coercive or $\lambda_2>0$. Then, the solution set of~\eqref{eq:problem_setting}  is non-empty.
\end{theorem}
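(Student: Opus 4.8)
The plan is to separate the two hypotheses. When $\lambda_2>0$ the result is immediate by the direct method: under Assumption~\ref{assump} the map $\x\mapsto F_\y(\A\x)+\tfrac{\lambda_2}{2}\|\x\|^2$ is continuous, while $\x\mapsto\lambda_0\|\x\|_0$ is lower semicontinuous and nonnegative, so $J_0$ is proper and l.s.c. on the closed set $\Cc^N$; since $F_\y\geq 0$ and $\|\x\|_0\geq 0$, the quadratic term makes $J_0$ coercive, and Weierstrass' theorem yields a minimizer. The substantive case is therefore $\lambda_2=0$ with $F_\y$ coercive, where $J_0=F_\y\circ\A+\lambda_0\|\cdot\|_0$ need not be coercive because $\ker\A\neq\{0\}$ (recall $M\ll N$), so the argument above breaks down.

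For this case I would follow Auslender--Teboulle~\cite{AuslenderTeboulle} and show that $J_0+\iota_{\Cc^N}$ is \emph{asymptotically level stable} (als); since $J_0\geq 0$ implies that its asymptotic function $(J_0)_\infty$ is nonnegative, their existence theorem for als functions then gives $\argmin_{\x\in\Cc^N} J_0\neq\emptyset$. It thus suffices to (i) record that $J_0+\iota_{\Cc^N}$ is proper, l.s.c.\ and bounded below, which is immediate because $\Cc^N$ is closed, $F_\y\circ\A$ is continuous, $\|\cdot\|_0$ is l.s.c., and $J_0(\mathbf{0})=F_\y(\mathbf{0})<\infty$; and (ii) verify the als property itself.

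For step (ii) I would take a bounded sequence $\{t_k\}$ and points $\x_k\in\Cc^N$ with $J_0(\x_k)\leq t_k$, $\|\x_k\|\to\infty$ and $\x_k/\|\x_k\|\to d$ with $d$ in the kernel of $(J_0)_\infty$, and show that $\x_k-d$ lies in the same sublevel set $\{J_0\leq t_k\}$ for $k$ large. First I would argue $\A d=0$: writing $\A\x_k=\|\x_k\|\,\A(\x_k/\|\x_k\|)$, if $\A d\neq 0$ then $\|\A\x_k\|\to\infty$, so coercivity of $F_\y$ forces $F_\y(\A\x_k)\to\infty$, contradicting $J_0(\x_k)\leq t_k$. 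With $d\in\ker\A$ the data term is invariant under the retraction, $F_\y(\A(\x_k-d))=F_\y(\A\x_k)$. Next the support stabilizes: for $i$ with $d_i\neq 0$ one has $|(\x_k)_i|\to\infty$, hence $(\x_k-d)_i\neq 0$, while for $d_i=0$ the $i$-th entry is untouched; therefore $\sigma(\x_k-d)=\sigma(\x_k)$ and $\|\x_k-d\|_0=\|\x_k\|_0$ for $k$ large. Finally $\x_k-d\in\Cc^N$ for $k$ large: this is clear for $\Cc=\R$, and for $\Cc=\R_{\geq 0}$ it follows because $d\geq 0$ and $(\x_k)_i\to+\infty$ on $\sigma(d)$. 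Combining the three facts gives $J_0(\x_k-d)=J_0(\x_k)\leq t_k$, which is exactly the als inequality.

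I expect the main obstacle to be precisely the interaction between the discontinuous $\ell_0$ term and the lack of coercivity: one must guarantee that translating by a recession direction neither raises the support count nor exits $\Cc^N$. The observation that resolves this, and that I would emphasize, is that along any bounded-value sequence the limiting direction must lie in $\ker\A$; consequently the data term is preserved exactly and only the bounded $\ell_0$ term and the constraint remain to be controlled, both of which behave well since the diverging coordinates are exactly those indexing $\sigma(d)$. The same scheme should extend verbatim to the more general functional $J_\Psi$ treated in the appendix, with $\lambda_0\|\cdot\|_0$ replaced by $B_\Psi$.
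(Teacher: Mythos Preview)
Your proposal is correct and follows essentially the same route as the paper: the case $\lambda_2>0$ is dispatched by coercivity, and for $\lambda_2=0$ with $F_\y$ coercive you verify asymptotic level stability in the sense of Auslender--Teboulle~\cite{AuslenderTeboulle} by showing that any recession direction $d$ lies in $\ker\A$ (so the data term is invariant under the retraction $\x_k\mapsto\x_k-\rho d$) and that the $\ell_0$ term and the constraint are eventually preserved coordinate-wise. The only cosmetic differences are that the paper computes $(J_0)_\infty$ explicitly to identify its kernel with $\ker\A$ (you instead derive $\A d=0$ directly from boundedness of $F_\y(\A\x_k)$, which is arguably cleaner), and that the paper carries out the argument for a general bounded penalty $\Phi$ satisfying~\eqref{eq:cond_phi}, which you correctly anticipate in your closing remark; note also that the als definition requires the conclusion for every $\rho>0$, but your argument extends verbatim upon replacing $d$ by $\rho d$.
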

\begin{proof}
If $\lambda_2 > 0$, then the result is trivial due to the coercivity of the squared $\ell_2$ norm and the fact that both the data fidelity term (Assumption~\ref{assump}) and the $\ell_0$ term are bounded from below. If $\lambda_2 = 0$ and $F_\y$ is coercive, the proof is a particular case of Theorem~\ref{th:existence-general}, whose statement and proof are given in Appendix~\ref{appendix:existence}, under the choice $\Phi(\cdot) = \lambda_0 \|\cdot\|_0$ in~\eqref{eq:jphi}.
\end{proof}

\subsection{Characterization of local minimizers} \label{sec:characterization}

The following proposition gives a characterization of local minimizers of Problem~\eqref{eq:problem_setting}. It  shows that the task of finding local minimizers is easy. It corresponds to solving a convex problem for a given  support  (i.e., the subset of $[N]$ identifying the non-zero entries of the considered minimizer).

\begin{proposition}[Local minimizers of $J_0$~\cite{Thi2015}]\label{prop:local_min_J0}
A point $\hat{\x} \in \Cc^N$ is a local minimizer of $J_0$ if and only if it solves
\begin{equation}
\label{eq:restriction_j0}
    \hat{\x}_{\hat{\sigma}} \in  \argmin_{\z \in \Cc^{\sharp\hat{\sigma}}}  F_\y(\A_{\hat{\sigma}} \z)+\frac{\lambda_2}{2}\|\z\|^2_2
\end{equation}
where $\hat{\sigma} = \sigma(\hat{\x})$ stands for the support of $\hat{\x}$. 
\end{proposition}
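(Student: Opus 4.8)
The plan is to exploit the fact that $\|\cdot\|_0$ is locally constant on each ``support stratum'' and jumps upward by at least one whenever the support strictly grows, so that near a fixed point $\hat{\x}$ the only genuinely free variables are those indexed by $\hat{\sigma} = \sigma(\hat{\x})$, on which $J_0$ restricts to a convex function. I would prove the two implications separately.

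For the necessity direction ($\Rightarrow$), suppose $\hat{\x}$ is a local minimizer. I first restrict attention to perturbations supported on $\hat{\sigma}$: for $\z \in \Cc^{\sharp\hat{\sigma}}$ close enough to $\hat{\x}_{\hat{\sigma}}$, the vector $\tilde{\x}$ equal to $\z$ on $\hat{\sigma}$ and zero elsewhere keeps every coordinate of $\hat{\sigma}$ nonzero by continuity, hence $\|\tilde{\x}\|_0 = \sharp\hat{\sigma}$ is constant there. On this set $J_0(\tilde{\x}) = F_\y(\A_{\hat{\sigma}}\z) + \lambda_0\sharp\hat{\sigma} + \frac{\lambda_2}{2}\|\z\|^2$, so local minimality of $\hat{\x}$ forces $\hat{\x}_{\hat{\sigma}}$ to be a local minimizer of the restricted objective $g(\z) := F_\y(\A_{\hat{\sigma}}\z) + \frac{\lambda_2}{2}\|\z\|^2$ over $\Cc^{\sharp\hat{\sigma}}$. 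The key structural remark is that $g$ is convex (a convex separable $F_\y$ from Assumption~\ref{assump} composed with the linear map $\A_{\hat{\sigma}}$, plus a convex quadratic) and $\Cc^{\sharp\hat{\sigma}}$ is convex, so a local minimizer of $g$ is automatically a global one; this yields \eqref{eq:restriction_j0}.

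For the sufficiency direction ($\Leftarrow$), suppose $\hat{\x}_{\hat{\sigma}}$ solves \eqref{eq:restriction_j0}, and pick a radius $\varepsilon$ below $\min_{i\in\hat{\sigma}}|\hat{x}_i|$, so that every $\x\in\Ball(\hat{\x};\varepsilon)$ satisfies $\sigma(\x)\supseteq\hat{\sigma}$. I then split the ball into two regimes. If $\sigma(\x)=\hat{\sigma}$, then $\A\x = \A_{\hat{\sigma}}\x_{\hat{\sigma}}$, $\|\x\|^2=\|\x_{\hat{\sigma}}\|^2$ and $\|\x\|_0=\|\hat{\x}\|_0$, so $J_0(\x)\ge J_0(\hat{\x})$ follows directly from $\hat{\x}_{\hat{\sigma}}$ solving the restricted convex problem. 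If instead $\sigma(\x)\supsetneq\hat{\sigma}$, then $\|\x\|_0 \ge \|\hat{\x}\|_0 + 1$, supplying a fixed gain of at least $\lambda_0$ in the penalty term. Here I invoke continuity: shrinking $\varepsilon$ if necessary so that the smooth part $\x\mapsto F_\y(\A\x)+\frac{\lambda_2}{2}\|\x\|^2$ varies by strictly less than $\lambda_0$ over $\Ball(\hat{\x};\varepsilon)$, the penalty jump dominates and again $J_0(\x) > J_0(\hat{\x})$.

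The routine parts are the continuity estimate and the bookkeeping of $\A\x=\A_{\hat{\sigma}}\x_{\hat{\sigma}}$ and feasibility $\x_{\hat{\sigma}}\in\Cc^{\sharp\hat{\sigma}}$ on the support stratum, which I would not expand. The step deserving care, and the crux of the statement, is the second regime of the sufficiency direction where the support strictly enlarges: the whole point is that the upward jump of $\lambda_0\|\cdot\|_0$ by at least one unit cannot be compensated by the continuous decrease of the data and ridge terms in a small enough neighborhood. The convexity of $g$, used to upgrade a local minimizer to a global one on the support, is the other essential ingredient, and is exactly where Assumption~\ref{assump} enters.
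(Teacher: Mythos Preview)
Your proof is correct and follows essentially the same route as the paper's: both directions hinge on the same two observations---convexity of the restricted objective on $\Cc^{\sharp\hat{\sigma}}$ to upgrade a local minimizer to a global one, and the $\lambda_0$-jump of the $\ell_0$ penalty beating the continuity of the smooth part when the support strictly enlarges. The only cosmetic difference is that the paper phrases the necessity direction by contradiction while you argue it directly, but the content is identical.
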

\begin{proof}
   Although this result is known from~\cite{Thi2015}, we prove it in Appendix~\ref{appendix:characterization} for completeness.
\end{proof}

\begin{coro}\label{coro:criter_locmin_j0}
  Let~$\hat{\x} \in \Cc^N$. Then $\hat{\x}$ is a local minimizer of~$J_0$ if and only if  
    \begin{equation}
        \hat{\x}_{\hat{\sigma}} \quad \text{solves} \quad \A_{\hat{\sigma}}^T \nabla F_\y\left(\A_{\hat{\sigma}}\hat{\x}_{\hat{\sigma}}  \right)+\lambda_2 \hat{\x}_{\hat{\sigma}}=\mathbf{0}
    \end{equation}
    for $\hat{\sigma} = \sigma(\hat{\x})$.
\end{coro}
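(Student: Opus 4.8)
The plan is to build directly on Proposition~\ref{prop:local_min_J0}, which already establishes that $\hat{\x}$ is a local minimizer of $J_0$ if and only if its restriction $\hat{\x}_{\hat{\sigma}}$ to the support $\hat{\sigma}=\sigma(\hat{\x})$ solves the reduced problem~\eqref{eq:restriction_j0}. Since that reduced problem is a smooth \emph{convex} minimization, all that remains is to translate its optimality into a first-order stationarity condition, so the corollary should follow essentially by unpacking standard convex optimality applied to Proposition~\ref{prop:local_min_J0}.

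First I would record that the reduced objective $g(\z):=F_\y(\A_{\hat{\sigma}}\z)+\frac{\lambda_2}{2}\|\z\|_2^2$ is convex on $\Cc^{\sharp\hat{\sigma}}$: by Assumption~\ref{assump} each $f(\cdot;y_m)$ is convex, hence $F_\y$ is convex, its composition with the linear map $\z\mapsto\A_{\hat{\sigma}}\z$ stays convex, and the ridge term is convex. The function $g$ is also differentiable, with gradient $\nabla g(\z)=\A_{\hat{\sigma}}^T\nabla F_\y(\A_{\hat{\sigma}}\z)+\lambda_2\z$ by the chain rule.

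The key step is the observation that $\hat{\x}_{\hat{\sigma}}$ always lies in the \emph{interior} of the feasible set $\Cc^{\sharp\hat{\sigma}}$. Indeed, by definition of the support every component of $\hat{\x}_{\hat{\sigma}}$ is nonzero; when $\Cc=\R$ this is immediate, and when $\Cc=\R_{\geq 0}$, being nonzero together with $\hat{\x}\in\R_{\geq 0}^N$ forces each entry to be strictly positive, hence interior to $\R_{\geq 0}$. For a differentiable convex function, a feasible point lying in the interior of the domain minimizes the reduced problem if and only if its gradient vanishes there (the variational inequality $\langle\nabla g(\hat{\x}_{\hat{\sigma}}),\z-\hat{\x}_{\hat{\sigma}}\rangle\geq 0$ over all feasible $\z$ collapses to $\nabla g(\hat{\x}_{\hat{\sigma}})=\mathbf{0}$ precisely because one may move in every direction). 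Applying this to $g$ at $\z=\hat{\x}_{\hat{\sigma}}$ yields exactly $\A_{\hat{\sigma}}^T\nabla F_\y(\A_{\hat{\sigma}}\hat{\x}_{\hat{\sigma}})+\lambda_2\hat{\x}_{\hat{\sigma}}=\mathbf{0}$, which combined with Proposition~\ref{prop:local_min_J0} closes both implications at once.

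The main obstacle I anticipate is not the convex-analytic equivalence itself but the book-keeping around the constraint set and the domain of differentiability: one must ensure $\nabla F_\y$ is well-defined at $\A_{\hat{\sigma}}\hat{\x}_{\hat{\sigma}}$. This is where the interiority observation pays off twice — it guarantees both that the optimality condition may be written as a plain equality (no multipliers are needed for the sign constraint, since it is inactive) and that $\A_{\hat{\sigma}}\hat{\x}_{\hat{\sigma}}$ falls inside $\mathrm{int}(\Cc)^M$, where $F_\y$ is twice differentiable by Assumption~\ref{assump}; for the KL instance this relies on $b>0$ together with $\A\geq 0$, which keeps each $[\A_{\hat{\sigma}}\hat{\x}_{\hat{\sigma}}]_m+b$ strictly positive.
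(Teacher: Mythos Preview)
Your proposal is correct and follows essentially the same approach as the paper: invoke Proposition~\ref{prop:local_min_J0} to reduce to the convex subproblem~\eqref{eq:restriction_j0}, then use the first-order optimality condition together with the observation that $\hat{\x}_{\hat{\sigma}}$ has no zero entries (hence lies in the interior of $\Cc^{\sharp\hat{\sigma}}$, so no complementary slackness terms appear). You simply spell out in more detail what the paper compresses into two sentences.
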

\begin{proof}
The proof directly stems from Proposition~\ref{prop:local_min_J0} and the first-order optimality condition for the convex problem~\eqref{eq:restriction_j0}. Note that in the case $\mathcal{C}=\mathbb{R}_{\geq 0}$ we have $\hat{\x}_{\hat{\sigma}} > 0$, so that there is no need to consider the complementary conditions.
\end{proof}

     Given these results, we see that the difficulty in finding a global minimizer  lies in the determination of the correct support. The amplitudes of non-zero coefficients are then easy to obtain.

\subsection{Characterization of strict local minimizers} \label{sec:strict_minimizers}

The following results provide a characterization of the strict (local) minimizers of $J_0$.

\begin{definition}\label{def:strict-minimizer}
A point $\hat{\x} \in \Cc^N$ is called a strict (local) minimizer for Problem~\eqref{eq:problem_setting} if there exists $\varepsilon >0$ such that 
\[
J_0(\hat{\x}) < J_0(\x), \quad \forall \x \in (\Ball(\hat{\x};\varepsilon) \cap \mathcal{C}^N) \setminus \{\hat{\x}\}
.
\]
\end{definition}
\begin{lemma}\label{lemma:0-is-strict-min}
     $J_0$ has a strict (local) minimizer at $\hat{\x}=\mathbf{0} \in \mathcal{C}^N$.
\end{lemma}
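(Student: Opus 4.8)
The plan is to exploit the discontinuous jump that the $\ell_0$ term produces at the origin and to show that, in a sufficiently small neighbourhood, this jump (of size at least $\lambda_0$) dominates the variation of the remaining, continuous, part of $J_0$. First I would record that $J_0(\mathbf{0}) = F_\y(\A\mathbf{0}) = F_\y(\mathbf{0})$, using $\A\mathbf{0}=\mathbf{0}$ together with $\|\mathbf{0}\|_0 = 0$ and $\|\mathbf{0}\|_2 = 0$. The only structural fact about $\mathbf{0}$ that is really used is that every $\x \in \Cc^N \setminus \{\mathbf{0}\}$ has at least one non-zero entry, so that $\|\x\|_0 \geq 1$ and hence $\lambda_0\|\x\|_0 \geq \lambda_0 > 0$.

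Next I would isolate the continuous part $g(\x) := F_\y(\A\x) + \tfrac{\lambda_2}{2}\|\x\|_2^2$. Under Assumption~\ref{assump}, each $f(\cdot; y_m)$ is a proper convex function that is twice differentiable — hence continuous — in a neighbourhood of the relevant argument at $\x=\mathbf{0}$ (for LS and LR because $0 \in \mathrm{int}(\Cc)=\R$; for KL because the effective argument equals $b>0$ and therefore lies strictly inside the domain of $f$). Consequently $g$ is continuous at $\mathbf{0}$, and I can select $\varepsilon>0$ such that $|g(\x) - g(\mathbf{0})| < \lambda_0$ for every $\x \in \Ball(\mathbf{0};\varepsilon) \cap \Cc^N$.

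Combining the two ingredients, for any $\x \in (\Ball(\mathbf{0};\varepsilon)\cap\Cc^N)\setminus\{\mathbf{0}\}$ one obtains
\[
J_0(\x) = g(\x) + \lambda_0\|\x\|_0 \;\geq\; g(\x) + \lambda_0 \;>\; \big(g(\mathbf{0}) - \lambda_0\big) + \lambda_0 \;=\; g(\mathbf{0}) \;=\; J_0(\mathbf{0}),
\]
which is precisely the strict inequality required by Definition~\ref{def:strict-minimizer}. The argument is elementary, and the only point deserving care is the continuity of $g$ at $\mathbf{0}$ when $\Cc = \R_{\geq 0}$, where the origin sits on the boundary of $\Cc^N$. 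This causes no difficulty for the data terms of Table~\ref{tab:data_fidelty}, since the shift $b>0$ in the KL term keeps $\mathbf{0}$ inside the interior of the effective domain of each $f(\cdot; y_m)$; I would flag this as the single place where the specific form of the penalties (rather than Assumption~\ref{assump} alone) is implicitly invoked.
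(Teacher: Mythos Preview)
Your proof is correct and follows essentially the same logic as the paper's: use that any non-zero point picks up at least $\lambda_0$ from the $\ell_0$ term, and choose a neighbourhood small enough that the smooth part cannot decrease by that much. The only cosmetic difference is that the paper phrases the lower bound on the smooth part via \emph{calmness} (the one-sided inequality $F_\y(\A\v)\geq F_\y(\mathbf{0})-l\|\v\|$, obtained from local Lipschitz continuity of the convex data term), whereas you invoke plain continuity of $g$ at $\mathbf{0}$; both yield $g(\x)>g(\mathbf{0})-\lambda_0$ on a small ball and the conclusion follows identically. Your explicit remark about the boundary case $\Cc=\R_{\geq 0}$ and the role of the shift $b>0$ for KL is a nice touch that the paper leaves implicit.
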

\begin{proof}
The proof is given in Appendix~\ref{appendox:0strict}.
\end{proof}

\begin{theorem}[Strict local minimizers of $J_0$]\label{thr:strict-minimizer}
    Let $\hat{\x}$ be a (local) minimizer of $J_0$. Define $\hat{\sigma}=\sigma(\hat{\x})$. Then $\hat{\x}$ is strict if and only if $\operatorname{rank}(\A_{\hat{\sigma}})=\sharp \hat{\sigma}$ or $\lambda_2 >0$.
\end{theorem}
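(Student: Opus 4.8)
The plan is to localize the analysis around $\hat{\x}$ and to split every nearby feasible point according to its support. First I would fix $\varepsilon < \min_{n \in \hat{\sigma}} |\hat{x}_n|$, so that every $\x \in \Ball(\hat{\x};\varepsilon) \cap \Cc^N$ satisfies $\sigma(\x) \supseteq \hat{\sigma}$; hence either $\sigma(\x) = \hat{\sigma}$ or $\sigma(\x) \supsetneq \hat{\sigma}$. Writing $g(\x) := F_\y(\A\x) + \tfrac{\lambda_2}{2}\|\x\|_2^2$ for the continuous part of $J_0$, I would dispose of the enlarged-support case first: if $\sigma(\x) \supsetneq \hat{\sigma}$ then $\|\x\|_0 \geq \sharp\hat{\sigma} + 1$, whence $J_0(\x) - J_0(\hat{\x}) \geq g(\x) - g(\hat{\x}) + \lambda_0$, which by continuity of $g$ exceeds $\lambda_0/2 > 0$ once $\varepsilon$ is small enough. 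Since there are only finitely many admissible supports, a single radius works uniformly. This step is unconditional and uses neither the rank hypothesis nor $\lambda_2$.

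It then remains to control $J_0$ on the points with $\sigma(\x) = \hat{\sigma}$, where the $\ell_0$ term equals the constant $\lambda_0\sharp\hat{\sigma}$ and the behaviour is governed by the restricted convex function $h(\z) := F_\y(\A_{\hat{\sigma}}\z) + \tfrac{\lambda_2}{2}\|\z\|_2^2$ on $\Cc^{\sharp\hat{\sigma}}$, of which $\hat{\x}_{\hat{\sigma}}$ is a minimizer by Proposition~\ref{prop:local_min_J0}. Strictness of $\hat{\x}$ against same-support competitors is therefore equivalent to $\hat{\x}_{\hat{\sigma}}$ being a strict minimizer of $h$. For the \emph{if} direction I would show that $h$ is strictly convex: if $\lambda_2 > 0$ the quadratic term is strictly convex and dominates, while if $\operatorname{rank}(\A_{\hat{\sigma}}) = \sharp\hat{\sigma}$ the map $\z \mapsto \A_{\hat{\sigma}}\z$ is injective, so composing the strictly convex $F_\y$ (Assumption~\ref{assump}) with it yields a strictly convex $h$. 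In either case $h$ has a unique, strict minimizer, giving strict inequality for same-support competitors; together with the previous paragraph this proves that $\hat{\x}$ is a strict local minimizer.

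For the \emph{only if} direction I would argue by contraposition: assume $\lambda_2 = 0$ and $\operatorname{rank}(\A_{\hat{\sigma}}) < \sharp\hat{\sigma}$, so $\ker \A_{\hat{\sigma}} \neq \{\mathbf{0}\}$. Choosing $\d \in \ker\A_{\hat{\sigma}} \setminus \{\mathbf{0}\}$, the function $h(\z) = F_\y(\A_{\hat{\sigma}}\z)$ is constant along $t \mapsto \hat{\x}_{\hat{\sigma}} + t\d$. Since all entries of $\hat{\x}_{\hat{\sigma}}$ are nonzero (indeed strictly positive when $\Cc = \R_{\geq 0}$, by Corollary~\ref{coro:criter_locmin_j0}), for $|t|$ small the perturbed vector stays feasible with unchanged support $\hat{\sigma}$; re-embedding it into $\R^N$ yields points arbitrarily close to $\hat{\x}$ at which $J_0$ equals $J_0(\hat{\x})$, so $\hat{\x}$ is not strict. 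This closes the equivalence, and the boundary case $\hat{\x} = \mathbf{0}$ is consistent, as then $\sharp\hat{\sigma} = \operatorname{rank}(\A_{\hat{\sigma}}) = 0$ while $\mathbf{0}$ is strict by Lemma~\ref{lemma:0-is-strict-min}.

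The main obstacle is not any single estimate but the careful bookkeeping of the feasibility and support structure: ensuring that one radius $\varepsilon$ simultaneously preserves the support of $\hat{\x}$, keeps enlarged-support competitors strictly above $J_0(\hat{\x})$, and—crucially when $\Cc = \R_{\geq 0}$—keeps the perturbations $\hat{\x}_{\hat{\sigma}} + t\d$ in the interior of the positive orthant so that the constraint is inactive. The convexity arguments themselves are short; the delicate conceptual point is that full column rank is exactly what upgrades the merely convex composition $F_\y \circ \A_{\hat{\sigma}}$ to a strictly convex one, which is why rank deficiency together with $\lambda_2 = 0$ is precisely the obstruction to strictness.
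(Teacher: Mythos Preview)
Your proof is correct and follows essentially the same route as the paper. The paper factors the argument through an intermediate lemma (strictness of $\hat\x$ is equivalent to uniqueness of the minimizer of the restricted subproblem~\eqref{eq:restriction_j0}) and then links uniqueness to strict convexity of $H_{\hat\sigma}$, which in turn is characterized by $\lambda_2>0$ or $\operatorname{rank}(\A_{\hat\sigma})=\sharp\hat\sigma$; you merge these two steps into a single direct argument. For the enlarged-support competitors the paper invokes calmness (a local Lipschitz lower bound) of $H_\y$, whereas you use plain continuity of $g$—both yield the same conclusion, and your version is marginally simpler. The remark about ``finitely many admissible supports'' is harmless but unnecessary: the continuity bound $|g(\x)-g(\hat\x)|<\lambda_0/2$ already works uniformly over all enlarged supports at once.
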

\begin{proof}
The proof is given in Appendix~\ref{appendix:strict-min}.
\end{proof}

\begin{theorem}\label{th:glob_min_J0_strict}
    Global minimizers of $J_0$ are strict.
\end{theorem}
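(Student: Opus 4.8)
The plan is to reduce the statement to the characterization of strict minimizers already obtained in Theorem~\ref{thr:strict-minimizer}. Indeed, a global minimizer $\hat{\x}$ of $J_0$ is \emph{a fortiori} a local minimizer, so Theorem~\ref{thr:strict-minimizer} applies and it suffices to verify that the dichotomy it requires holds at $\hat{\x}$, namely that $\operatorname{rank}(\A_{\hat{\sigma}})=\sharp\hat{\sigma}$ or $\lambda_2>0$, where $\hat{\sigma}=\sigma(\hat{\x})$. When $\lambda_2>0$ there is nothing to prove. Hence the entire argument concentrates on the pure-sparsity case $\lambda_2=0$, where I must show that global optimality forces the active columns $\A_{\hat{\sigma}}$ to be linearly independent.

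I would establish this by contraposition. Suppose $\lambda_2=0$ and $\operatorname{rank}(\A_{\hat{\sigma}})<\sharp\hat{\sigma}$, so that $\ker\A_{\hat{\sigma}}$ contains a nonzero vector $\mathbf{v}$. Consider the affine path $t\mapsto \hat{\x}_{\hat{\sigma}}+t\mathbf{v}$ restricted to the support. Since $\A_{\hat{\sigma}}\mathbf{v}=\mathbf{0}$, the image $\A_{\hat{\sigma}}(\hat{\x}_{\hat{\sigma}}+t\mathbf{v})=\A\hat{\x}$ is independent of $t$, so the data term $F_\y(\A\cdot)$ is constant along the path. Because every coordinate of $\hat{\x}_{\hat{\sigma}}$ is nonzero, each coordinate with $v_i\neq 0$ is an affine function of $t$ vanishing at a unique nonzero value; picking the step $t^\star$ of smallest modulus that annihilates a coordinate, the extended point $\tilde{\x}$ (equal to $\hat{\x}_{\hat{\sigma}}+t^\star\mathbf{v}$ on $\hat{\sigma}$ and $0$ elsewhere) satisfies $\sigma(\tilde{\x})\subsetneq\hat{\sigma}$, hence $\|\tilde{\x}\|_0\leq\sharp\hat{\sigma}-1$. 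With $\lambda_2=0$ this yields $J_0(\tilde{\x})=F_\y(\A\hat{\x})+\lambda_0\|\tilde{\x}\|_0<F_\y(\A\hat{\x})+\lambda_0\sharp\hat{\sigma}=J_0(\hat{\x})$, contradicting the global optimality of $\hat{\x}$ and proving $\operatorname{rank}(\A_{\hat{\sigma}})=\sharp\hat{\sigma}$.

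The one point requiring care — and the place where the argument is not completely mechanical — is feasibility in the constrained setting $\Cc=\R_{\geq 0}$, where the slid point must remain in the nonnegative orthant. Here I would exploit that $\hat{\x}_{\hat{\sigma}}>\mathbf{0}$ componentwise: moving along $+\mathbf{v}$ (resp. $-\mathbf{v}$) the coordinates with $v_i<0$ (resp. $v_i>0$) decrease, and choosing the sign for which such a coordinate exists — at least one does since $\mathbf{v}\neq\mathbf{0}$ — the first boundary crossing occurs at a finite step $t^\star$ setting one coordinate to zero while keeping all others strictly positive. Thus $\tilde{\x}\in\Cc^N$ remains feasible and the contradiction above goes through verbatim, so that in all cases Theorem~\ref{thr:strict-minimizer} applies and $\hat{\x}$ is strict.
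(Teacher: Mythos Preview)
Your proposal is correct and follows essentially the same approach as the paper: reduce to Theorem~\ref{thr:strict-minimizer}, then in the case $\lambda_2=0$ argue by contradiction that a nontrivial kernel of $\A_{\hat\sigma}$ would allow sliding along a kernel direction to kill a support coordinate while leaving the data term unchanged, strictly decreasing $J_0$. Your treatment of the constrained case $\Cc=\R_{\geq 0}$ via the first boundary crossing is slightly more direct than the paper's interior--exterior path argument, but the two are equivalent in spirit.
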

\begin{proof}
 The proof is given in Appendix~\ref{sec:proof_glob_min_J0_strict}.
\end{proof}

Theorem~\ref{th:glob_min_J0_strict} shows that, among local minimizers of $J_0$, strict ones are of great interest as they contains global minimizers. Moreover, from Theorem~\ref{thr:strict-minimizer} one can see that strict local minimizers of $J_0$ are countable, although growing exponentially with the dimension.

\section{The \BR{}  and its exact relaxation properties}\label{sec:brex}

\subsection{Definition}

In this section, we introduce the $\ell_0$ Bregman relaxation (\BR{}), a continuous approximation of $\lambda_0 \|\cdot\|_0$. We then provide a geometrical interpretation of this relaxation as well as its relation to generalized conjugates. 

\begin{definition}[The $\ell_0$ Bregman relaxation] \label{def:Breg_L0} Let $\Psi=\{\psi_n\}_{n \in [N]}$ be a family  generating functions $\psi_n: \Cc \to \R$ which are strictly convex, proper,  twice differentiable over $\mathrm{int}(\Cc)$, and such that $z \mapsto \psi_n'(z)z - \psi_n(z)$ is coercive. Then,  we define the $\ell_0$-Bregman relaxation (\BR{}) associated to $\Psi$ as 
\begin{equation}~\label{eq:prop:def_bergman1}
    B_\Psi(\x): = \sup_{\alpha\in \mathbb{R}} \; \sup_{\z \in \Cc^N}~ \left\lbrace \alpha- D_{\Psi}(\x,\z): \alpha -  D_{\Psi}(\cdot,\z) \leq \lambda_0 \|\cdot\|_0  \right\rbrace ,
\end{equation} 
where $D_{\Psi} : \Cc^N \times \Cc^N \to \R_{\geq 0}$ is the separable Bregman distance associated to $\Psi$, 
\begin{equation}\label{eq:bregman-distance}
    D_\Psi(\x,\z) = \sum_{n=1}^N d_{\psi_n}(x_n,z_n) \quad \text{ with } \quad d_{\psi_n}(x,z) =  \psi_n(x) - \psi_n(z) - \psi_n'(z) (x-z) 
\end{equation}
for all $\x,\z \in\Cc^N$ and $x,z\in\Cc$.
\end{definition}

Example of classical generating functions used in this paper are reported in Table~\ref{tab:generating-functions}.

\begin{table}
\renewcommand{\arraystretch}{2} 
    \centering
    \begin{tabular}{|c|c|c|}
        \hline
       \cellcolor[HTML]{FFFFC7} \textbf{Function} & \cellcolor[HTML]{FFFFC7}\textbf{$\mathcal{C}$} &  \cellcolor[HTML]{FFFFC7}\textbf{$\psi_n$} \\
        \hline
        Power Function & $\mathbb{R}$ & $\displaystyle \frac{\gamma_n}{p(p-1)}|x|^p$,\quad $p > 1$  \\ 
        \hline
        Shannon Entropy  & $\mathbb{R}_{\geq 0}$ & $\displaystyle \gamma_n(x \log(x) - x + 1)$ \\ 
        \hline
        Kullback Leibler & $\mathbb{R}_{\geq 0}$ & $\displaystyle \gamma_n(x + b - y\log(x + b))$,\quad $y,b > 0$ \\
        \hline
    \end{tabular}
    \caption{Generating functions of the form $\psi_n =\gamma_n \psi$ satisfying assumptions in Definition~\ref{def:Breg_L0}.}
    \label{tab:generating-functions}
\end{table}

\begin{remark}
    The requirement that $z \mapsto \psi_n'(z)z - \psi_n(z)=d_{\psi_n}(0,z) \moha{-\psi_n(0)}$ is coercive is rather mild and holds for all the common generating functions considered in this paper (see Table~\ref{tab:generating-functions}). By \cite[Corollary 3.11]{Bauschke1997} and observing that the functions $\psi_n$ are Legendre functions, we can equivalently assume that $\text{dom} ~\psi_n^*$ (where $\psi_n^*$ denotes the Fenchel conjugate of $\psi_n$)  is open to guarantee the required coercivity (actually, in this case we have the coercivity of $d_\psi(x, \cdot)$ for all $x$). Such condition is necessary to ensure the existence of the $\alpha_n^-$ and $\alpha_n^+$ in Proposition~\ref{prop:Breg_l0} which are quite central to the subsequent analysis.  Note that, for a fixed $\lambda_0$, we can relax this condition by requiring the functions $\psi_n$ to be  such that the $\lambda_0$-sublevel set of   $d_{\psi_n}(0,\cdot)$ is bounded.
\end{remark}

 When choosing $\psi_n(x)=\gamma \frac{x^2}{2}$ for $\gamma>0$, the Bregman distance $D_{\Psi}$ corresponds to the  standard Euclidean distance. In this case, the \BR{} appeared in~\cite[Example 1.44]{rockafellar2009variational} under the name of ``proximal hulls'', for which the link with convex envelops and exact relaxation were not studied. This link was explored in~\cite{Carlsson2019}, where the \BR{} penalty with quadratic generating functions was studied under the name of ``quadratic envelopes'' (cf. equation~\eqref{eq:quadratic}). This term refers indeed to the use of the Euclidean distance in Definition~\ref{def:Breg_L0} and to the use of quadratic data terms in \eqref{eq:problem_setting}. The intuition behind our proposal consists in changing the underlying geometry in terms of suitable Bregman divergences in order to study the link with  convex envelopes and to derive exact relaxations in the case of non-quadratic data terms.
The following proposition provides a tool for computing the \BR{} penalty explicitly, given the familly of generating functions $\Psi$.

\begin{proposition}\label{prop:Breg_l0}
Let $\Psi$ be as in Definition~\ref{def:Breg_L0}. Then, for every $\x\in\Cc^N$, we have
$
    B_\Psi(\x) = \sum_{n=1}^N \beta_{\psi_n}(x_n)
$ with
\begin{equation}\label{eq:prop1_equa2} 
    \beta_{\psi_n}(x) = \left\lbrace 
    \begin{array}{ll}
        \psi_n(0) - \psi_n(x) +  \psi_n'(\alpha_n^-)x, \quad &\quad  \text{ if } x \in [\alpha_n^-,0] \\
          \psi_n(0) - \psi_n(x) +  \psi_n'(\alpha_n^+)x,   \quad & \quad \text{ if } x \in [0,\alpha_n^+] \\
      \lambda_0,  \quad  & \quad  \text{ otherwise}
    \end{array}
    \right.,
\end{equation}
where the interval $[\alpha_n^-,\alpha_n^+] \ni 0$ defines the $\lambda_0$-sublevel set of $ d_{\psi_n}(0,\cdot)$. Note that $\alpha_n^-=0$ in the case $\Cc=\R_{\geq 0}$.
\end{proposition}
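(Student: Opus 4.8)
The plan is to first eliminate the scalar $\alpha$ from the supremum in~\eqref{eq:prop:def_bergman1}, then exploit separability of both $\|\cdot\|_0$ and $D_\Psi$ to reduce the computation to $N$ independent one-dimensional problems, and finally carry out each one-dimensional maximization explicitly to recover~\eqref{eq:prop1_equa2}. First I would observe that for a fixed $\z \in \Cc^N$ the constraint $\alpha - D_\Psi(\cdot,\z) \leq \lambda_0\|\cdot\|_0$ is equivalent to $\alpha \leq \lambda_0\|\w\|_0 + D_\Psi(\w,\z)$ for every $\w \in \Cc^N$, so the largest admissible value is $\alpha^*(\z) := \inf_{\w \in \Cc^N}\{\lambda_0\|\w\|_0 + D_\Psi(\w,\z)\}$ and $B_\Psi(\x) = \sup_{\z \in \Cc^N}\{\alpha^*(\z) - D_\Psi(\x,\z)\}$. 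Because $\|\cdot\|_0$ and $D_\Psi(\cdot,\z)$ are separable, this infimum decouples coordinatewise: for each $n$, minimizing $\lambda_0\indic(w\neq 0) + d_{\psi_n}(w,z_n)$ over $w \in \Cc$ yields $\min\{d_{\psi_n}(0,z_n),\,\lambda_0\}$, since the choice $w=0$ gives $d_{\psi_n}(0,z_n)$ while any $w \neq 0$ gives at least $\lambda_0$, the value $\lambda_0$ being approached as $w\to z_n$ (recall $d_{\psi_n}(z_n,z_n)=0$). Hence $\alpha^*(\z) = \sum_{n} \min\{d_{\psi_n}(0,z_n),\lambda_0\}$, and the coercivity hypothesis on $z \mapsto \psi_n'(z)z - \psi_n(z)$ guarantees that the $\lambda_0$-sublevel set $[\alpha_n^-,\alpha_n^+]$ of $d_{\psi_n}(0,\cdot)$ is a bounded interval containing $0$ (with $\alpha_n^-=0$ when $\Cc = \R_{\geq 0}$).

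Next, since $\alpha^*(\z) - D_\Psi(\x,\z) = \sum_n [\min\{d_{\psi_n}(0,z_n),\lambda_0\} - d_{\psi_n}(x_n,z_n)]$ is again separable, the outer supremum over $\z$ splits and gives $B_\Psi(\x) = \sum_n \beta_{\psi_n}(x_n)$ with $\beta_{\psi_n}(x) = \sup_{z\in\Cc}\{\min\{d_{\psi_n}(0,z),\lambda_0\} - d_{\psi_n}(x,z)\}$. To evaluate this one-dimensional supremum I would split $\Cc$ into the sublevel set $[\alpha_n^-,\alpha_n^+]$ and its complement. On $[\alpha_n^-,\alpha_n^+]$ the cap is inactive and the elementary identity $d_{\psi_n}(0,z) - d_{\psi_n}(x,z) = \psi_n(0) - \psi_n(x) + \psi_n'(z)\,x$ holds; since $\psi_n'$ is strictly increasing, this expression is monotone in $z$ with the sign of $x$, hence maximized at $z=\alpha_n^+$ for $x\geq 0$ and at $z=\alpha_n^-$ for $x\leq 0$, producing exactly the first two branches of~\eqref{eq:prop1_equa2}. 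On the complement the cap is active, $\lambda_0 - d_{\psi_n}(x,z) \leq \lambda_0$, and equality is attainable (at $z=x$) precisely when $x \notin [\alpha_n^-,\alpha_n^+]$, which yields the value $\lambda_0$ of the ``otherwise'' branch. A short comparison of the two regimes, using $\partial_z[\min\{d_{\psi_n}(0,z),\lambda_0\}-d_{\psi_n}(x,z)] = \psi_n''(z)\,x$ inside the sublevel set and $-\psi_n''(z)(z-x)$ outside, confirms that the claimed branch dominates in each range of $x$, while continuity at $z=\alpha_n^\pm$ ensures the pieces match.

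I expect the main obstacle to be this last one-dimensional maximization: one must track carefully which endpoint or interior point realizes the supremum across all sign/location regimes of $x$, check that the active-cap value $\lambda_0$ never beats the sublevel-set value when $x \in [\alpha_n^-,\alpha_n^+]$ (and conversely when $x$ lies outside), and handle the boundary $z=\alpha_n^\pm$ so that the two branches agree. The earlier reduction steps are comparatively routine, the only subtlety being that the coordinatewise infimum defining $\alpha^*(\z)$ realizes the value $\lambda_0$ as an infimum rather than a minimum when $z_n = 0$; this is exactly where the coercivity assumption of Definition~\ref{def:Breg_L0} enters, by making the thresholds $\alpha_n^\pm$ well defined.
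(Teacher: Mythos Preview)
Your proposal is correct and follows essentially the same approach as the paper's proof: eliminate $\alpha$ to obtain $\min\{d_{\psi_n}(0,z),\lambda_0\}$, reduce by separability to the one-dimensional supremum, and evaluate it by splitting $\Cc$ into the $\lambda_0$-sublevel set $[\alpha_n^-,\alpha_n^+]$ and its complement, using the identity $d_{\psi_n}(0,z)-d_{\psi_n}(x,z)=\psi_n(0)-\psi_n(x)+\psi_n'(z)x$ together with monotonicity of $\psi_n'$. The paper organizes the final case analysis slightly differently (first fixing the location of $x$, then computing the two partial suprema over $z$ and comparing), but the substance is identical.
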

\begin{proof}
    The proof can be found in Appendix~\ref{appendix:structure_bcel0}. Note that the existence of $\alpha_n^-$ and $\alpha_n^+$ is ensured with the assumption that $z \mapsto \psi_n'(z)z - \psi_n(z)$ is coercive which implies that  $ d_{\psi_n}(0,\cdot)$ is coercive and has bounded sublevel sets. 
\end{proof}

 In other words, Proposition~\ref{prop:Breg_l0} states that we can get a closed-form expression of \BR{} as soon as we have access, for all $n\in [N]$, to  $\psi_n'$ and to  the bounds $\alpha^-_n$ and $\alpha^+_n$ of the $\lambda_0$-sublevel set of $d_{\psi_n}(0,\cdot)$. The closed-form expressions of \BR{} for the generating functions of Table~\ref{tab:generating-functions} are reported in Table~\ref{tab:B-rex}. Technical details on the computation of the points $\alpha_n^{\pm}$ are reported in Appendix~\ref{appendix:compute-brex}. We plot the graphs of these \BR{} in Figure~\ref{fig:norm-zero-brex}. Finally, note that whenever the \BR{} penalty is associated with the power function, by setting $p=2$ we get the relaxation proposed in~\cite{Soubies2015,Carlsson2019} for $\gamma_n=\|\a_n\|^2$ where $\a_n$ denote the $n$th column of the matrix $\A$.

\begin{table}
\captionsetup{width=0.9\textwidth}
\renewcommand{\arraystretch}{1.5} 
    \centering
   \begin{tabular}{|c|c|c|c|}
   \hline
  \cellcolor[HTML]{FFFFC7}     $\psi_n$ &  \cellcolor[HTML]{FFFFC7}  $ \alpha_n^-$ &  \cellcolor[HTML]{FFFFC7}   $\alpha_n^+$ &  \cellcolor[HTML]{FFFFC7} $\psi_n'$ \\ \hline
    Power Function &  $  -\left(\frac{p\lambda_0}{\gamma_n} \right)^{\frac{1}{p}}  $ & $ \left(\frac{p\lambda_0}{\gamma_n} \right)^{\frac{1}{p}} $ &  $\; \frac{\gamma_n\operatorname{sign}(x)}{p-1}|x|^{p-1}  \;$  \\ \hline
    Shannon Entropy &  $0$ & $\frac{\lambda_0}{\gamma_n}$& $\gamma_n\log(x)$  \\ \hline
    Kullback-Leibler  &   $0$ & $ \; \frac{-b}{W(-b\e^{-\kappa})}-b$ & $\gamma_n(1-\frac{y}{x+b}) \;$ \\ \hline
    \end{tabular}
\bigskip
   \begin{tabular}{|c|c|}
    \hline
  \cellcolor[HTML]{FFFFC7}     $\psi_n$ & \cellcolor[HTML]{FFFFC7}   $\beta_{\psi_n}(x)$ \\ \hline
    Power Function &  $\begin{cases}
    \frac{-\gamma_n}{p(p-1)}|x|^p-\frac{\gamma_n}{p-1}  \left( \frac{p\lambda_0}{\gamma_n} \right)^{\frac{p-1}{p}}  x & \text{ if } x \in [\alpha_n^- ,0] \\
    \frac{-\gamma_n}{p(p-1)}|x|^p +  \frac{\gamma_n}{p-1} \left( \frac{p\lambda_0}{\gamma_n} \right)^{\frac{p-1}{p}} x   & \text{ if } x \in [0, \alpha_n^+ ] \\ 
    \lambda_0   &  \text{ otherwise}.
    \end{cases}$ \\ \hline
    Shannon Entropy & $ \begin{cases}
    \gamma_n x \left(\log\left(\frac{\lambda_0}{\gamma_n x} \right)+1 \right),  & \text{ if } x \in \left[ 0,\frac{\lambda_0}{\gamma_n} \right]\\
    \lambda_0, &\text{ if }  x \geq \alpha_n^+
    \end{cases}$ \\ \hline
    Kullback-Leibler  &   $\begin{cases}
    \gamma_n y \left(\log \left ( \frac{x+b}{b} \right )+\frac{W(-be^{-\kappa})}{b} x \right)  & \text{ if } x \in [0 , \alpha_n^+] \\
    \lambda_0,   &  \text{ if } x \geq \alpha_n^+
    \end{cases}$\\ \hline
    \end{tabular}
    \caption{Top: quantities $\alpha_n^-$, $\alpha_n^+$ and $\psi_n'$ for the  generating functions $\psi_n$ of Table~\ref{tab:generating-functions}. Bottom: Corresponding expressions of the \BR{} penalty.  $W(\cdot)$ denotes the Lambert function and $k=\frac{\lambda_0}{y\gamma_n}+\log(b)+1$. }
    \label{tab:B-rex}
\end{table}

\begin{figure}
  \centering
  \begin{subfigure}[b]{0.45\textwidth}
    \includegraphics[width=\textwidth]{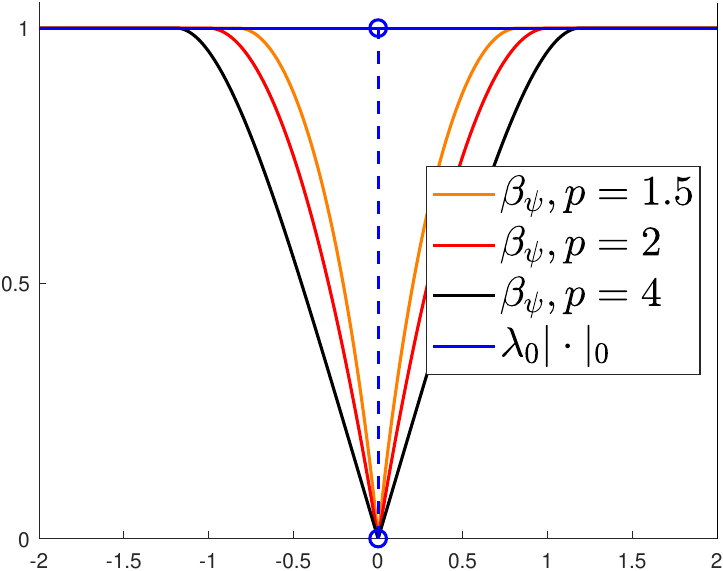}
    \caption{The \BR{} penalty on $\mathcal{C}=\R$ for different choices of $p$-power functions $\psi$.}
    \label{fig:brex_power_function}
  \end{subfigure}
  \hfill
  \begin{subfigure}[b]{0.45\textwidth}
    \includegraphics[width=\textwidth]{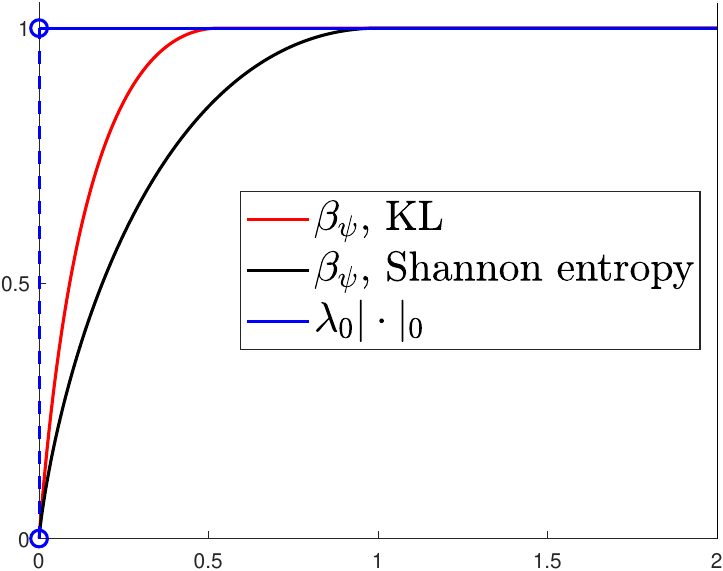}
    \caption{The \BR{} penalty on $\mathcal{C}=\R_{\geq 0}$ for $\psi=\mathrm{KL}$ and $\psi=$ Shannon entropy.}
    \label{fig:brex_kl_shannon}
  \end{subfigure}
  \caption{A plot of $\lambda_0|\cdot|_0$ on $\Cc$ along with its \BR{} penalty for different choices of  generating functions $\psi$ (see Tables~\ref{tab:generating-functions} and~\ref{tab:B-rex}).}
  \label{fig:norm-zero-brex}
\end{figure}

\begin{remark}
    As in Definition~\ref{def:Breg_L0}, all our theoretical results will be expressed with a general family of generating functions $\Psi$. However, in all our illustrations, we will consider generating functions of the form $\psi_n(x) = \gamma_n \psi(x)$ for a given $\psi$ and  $\gamma_n>0$ (cf Table~\ref{tab:generating-functions}).  As such, exact relaxation properties are fully controlled by the values of parameters $\gamma_n$ which simplifies the presentation.  
\end{remark}

\paragraph{Geometrical interpretation}

We provide in Figure~\ref{fig:def_illu}  a geometrical interpretation of the  proposed \BR{} of Table~\ref{tab:B-rex}. More precisely, we plot the graphs of the $\ell_0$ pseudo-norm in 1D, alongside its \BR{} relaxation computed by formula \eqref{eq:prop1_equa2}. We also plot in blue the minorants functions $x \to \alpha - d_{\psi}(x,z)$, for different $z$, and for the $\alpha$ attaining  the $\sup$ in Definition~\ref{def:Breg_L0} (i.e., the largest $\alpha$ which ensures the function to remain below $\lambda_0 |x|_0$). Then, taking the supremum with respect to $z$ leads to the \BR{} curve shown in red. Furthermore, note that \BR{} is constant and equal to $\lambda_0$ outside $[\alpha^{-}, \alpha^+]$, and concave on intervals $[\alpha^-,0]$ and $[0,\alpha^+]$. Such concavity arises from the convexity of Bregman distances with respect to the first argument, for all choices of $\psi$. 

\begin{figure}
\centering
\begin{subfigure}[t]{0.85\textwidth} 
\centering
    \includegraphics[width=\linewidth]{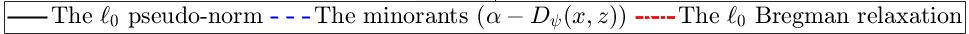}
\end{subfigure}

\begin{subfigure}[t]{0.45\textwidth} 
\centering
\includegraphics[width=\linewidth]{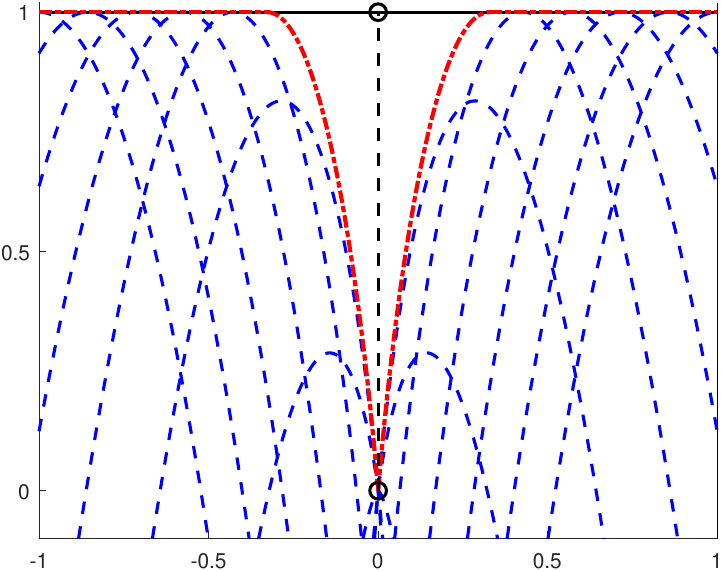}
\caption{ Power function with $p=3/2$.}
\label{subfig:def_illu_p2}
\end{subfigure}
\hfill
\begin{subfigure}[t]{0.45\textwidth}
\centering
\includegraphics[width=\linewidth]{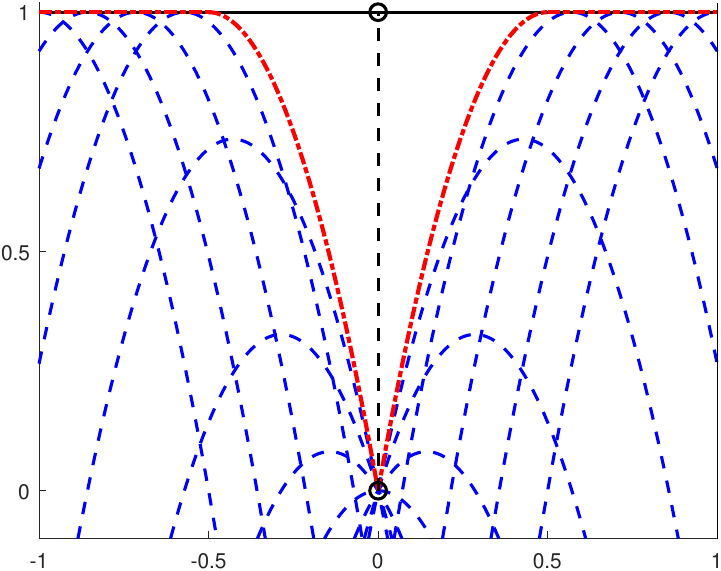}
\caption{Power function with $p=2$.}
\label{subfig:def-illu-p1.5}
\end{subfigure}
\begin{subfigure}[t]{0.45\textwidth}
\centering
\includegraphics[width=\linewidth]{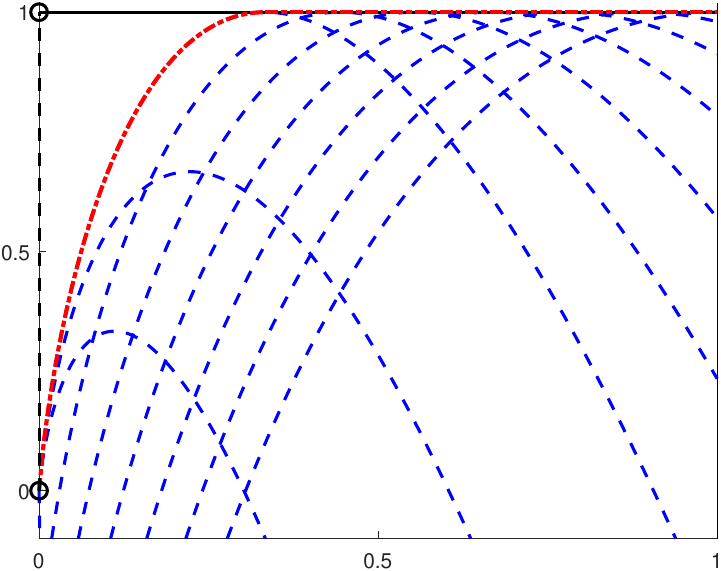}
\caption{Shannon entropy.}
\label{subfig:def_illu_shannon}
\end{subfigure}
\hfill
\begin{subfigure}[t]{0.45\textwidth}
\centering
\includegraphics[width=\linewidth]{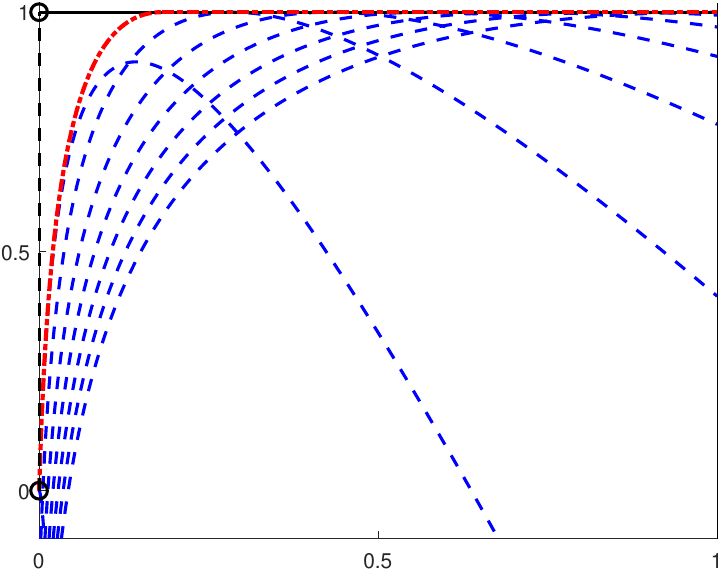}
\caption{KL.}
\label{subfig:def_illu_kl}
\end{subfigure}
\caption{One-dimensional geometrical illustration of the \BR{}  of Table~\ref{tab:B-rex}, over their respective domain $\Cc$. }
\label{fig:def_illu}
\end{figure}

\paragraph{Relation to generalized conjugates} 

We  follow \cite[Section 11.L]{RockWets98}  to draw connections between the proposed \BR{} penalty and a generalization of standard Fenchel conjugates.
 Consider any (not necessarily symmetric) function $\Phi: \Cc^N \times \Cc^N \to  \R \cup \left\lbrace \pm \infty \right\rbrace$. Generalized conjugates of a l.s.c. function~$g : \Cc^N \to \Cc \cup \left\lbrace \pm \infty \right\rbrace$ can be defined by simply changing the standard Fenchel coupling term $\left<\z,\x\right>$ to $\Phi(\x,\z)$ (or $\Phi(\z,\x)$).
 \begin{definition}[Generalized conjugate~\cite{RockWets98}]\label{def:general-fenchel-conjugate}
 Let $\Phi: \Cc^N \times \Cc^N \to \R \cup \left\lbrace \pm \infty \right\rbrace$. For $g: \R^N \to \R \cup \left\lbrace \pm \infty \right\rbrace$, the  $\Phi$-conjugate $g^{\Phi}$ of $g$ is defined by:
\begin{equation}\label{eq:Phi-conjugate-def}
    g^{\Phi}(\z) := \sup_{\x \in \Cc^N} ~ \Phi(\x,\z)-g(\x).
\end{equation}
\end{definition}

For instance, this notion of generalized conjugacy has been used to define a coupling for which the $\ell_0$ pseudo-norm equals its biconjugate~\cite{chancelier2021hidden}.
Here, we use it to introduce a generalized version of the $S$-transform proposed in~\cite[Section 3]{Carlsson2019}.
\begin{definition}
 Let $\Psi$ be defined as in Definition~\ref{def:Breg_L0}.
    The generalized $S_{\Psi}$ transform is the ($-D_{\Psi}$)-conjugate of $\lambda_0\|\cdot\|_0$, i.e. 
    \begin{equation}\label{eq:definition:s_transfrom}
   S_{\Psi}(\z):=\sup_{\x \in \Cc^N}~-\lambda_0 \|\x\|_0 - D_{\Psi}(\x,\z).
   \end{equation}
 \end{definition}
 
Note that an analogous notion which has been used mostly in the framework of optimal transport is the one of $c$-transform, see, e.g., ~\cite[Definition 1.8]{Ambrosio2013}. According to this definition, $S_\Psi$ is the \change{$(-D_{\Psi})_{-}$-conjugate} of the function $\lambda_0 \|\cdot\|_0$. Also, note that by considering a family of the form $\left\lbrace \gamma \psi \right\rbrace$ with $\gamma >0$, $S_{\Psi}$ corresponds to the left Bregman-Moreau envelope computed with constant $\gamma^{-1}$ \cite{Bauschke2017RegularizingWB}. 

\begin{proposition}\label{prop:Bregman-composition-of-S}
 Let $\Psi$ be defined as in Definition~\ref{def:Breg_L0}. Then, $S_{\Psi} : \Cc^N \to \R_{\leq 0}$ is a continuous function. Moreover, we have  
\begin{equation}\label{eq:B_Psi_composition}
     B_{\Psi} = S_{\Psi} \circ S_{\Psi}
     ,
 \end{equation}
that is:
\begin{equation}\label{eq:c-transform-brex}
 B_{\Psi}(\x)= \sup_{\u \in \Cc^N} \left( \inf_{\v \in \Cc^N} \lambda_0 \|\v\|_0+D_\Psi(\v,\u) \right)-D_\Psi(\x,\u)
 .
\end{equation}
 Finally, $B_\Psi : \Cc^N \to \R_{\geq 0}$ is continuous on $\operatorname{int}(\operatorname{dom}(B_\Psi))$.
 \end{proposition}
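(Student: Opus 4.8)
The plan is to split the statement into three parts: (i) $S_\Psi$ is real-valued, nonpositive and continuous; (ii) the composition identity \eqref{eq:B_Psi_composition}--\eqref{eq:c-transform-brex}; and (iii) continuity of $B_\Psi$ on the interior of its domain. The guiding idea is to exploit separability throughout: since both $\|\cdot\|_0$ and $D_\Psi$ decouple across coordinates, every supremum and infimum below reduces to a one-dimensional problem, so it suffices to analyse the scalar building blocks $s_{\psi_n}$ and $\beta_{\psi_n}$.

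First I would treat $S_\Psi$. Writing $S_\Psi(\z)=\sum_n s_{\psi_n}(z_n)$ with $s_{\psi_n}(z)=\sup_{x\in\Cc}\bigl(-\lambda_0|x|_0-d_{\psi_n}(x,z)\bigr)$, I would split the inner supremum according to $x=0$, which contributes $-d_{\psi_n}(0,z)$, versus $x\neq 0$, which contributes $-\lambda_0-\inf_{x\neq 0}d_{\psi_n}(x,z)$. For $z\neq 0$ the strict convexity of $\psi_n$ forces $\inf_{x\neq 0}d_{\psi_n}(x,z)=0$, attained at $x=z\neq 0$; for $z=0$ the candidate $x=0$ already yields the value $0$, which dominates every $x\neq 0$ candidate (each being $\le-\lambda_0<0$). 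In all cases this collapses to $s_{\psi_n}(z)=-\min\{d_{\psi_n}(0,z),\lambda_0\}$, which is finite, nonpositive, and equals $-d_{\psi_n}(0,z)$ on the $\lambda_0$-sublevel set $[\alpha_n^-,\alpha_n^+]$ and $-\lambda_0$ outside it. Continuity of $s_{\psi_n}$ then follows from continuity of $z\mapsto d_{\psi_n}(0,z)$ on $\Cc$ (guaranteed by the assumptions of Definition~\ref{def:Breg_L0}), and summing gives that $S_\Psi\colon\Cc^N\to\R_{\le 0}$ is continuous.

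Next I would establish \eqref{eq:B_Psi_composition}. Starting from Definition~\ref{def:Breg_L0}, I fix $\z$ and perform the inner maximization over $\alpha$ first. The constraint $\alpha-D_\Psi(\cdot,\z)\le\lambda_0\|\cdot\|_0$ is equivalent to $\alpha\le\inf_{\w\in\Cc^N}\bigl(\lambda_0\|\w\|_0+D_\Psi(\w,\z)\bigr)$, and this infimum is finite, being nonnegative and bounded above by $\lambda_0\|\z\|_0$ (take $\w=\z$). Since $\alpha-D_\Psi(\x,\z)$ is increasing in $\alpha$ on the admissible half-line, the supremum over $\alpha$ is attained at the right endpoint, yielding $\inf_{\w}\bigl(\lambda_0\|\w\|_0+D_\Psi(\w,\z)\bigr)-D_\Psi(\x,\z)$. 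Recognizing that this infimum equals $-S_\Psi(\z)$, the remaining supremum over $\z$ (renamed $\u$) produces exactly \eqref{eq:c-transform-brex}, i.e. $B_\Psi(\x)=\sup_{\u}\bigl(-S_\Psi(\u)-D_\Psi(\x,\u)\bigr)$, which is the $(-D_\Psi)$-conjugate of $S_\Psi$ in the sense of Definition~\ref{def:general-fenchel-conjugate}, hence the asserted double transform $S_\Psi\circ S_\Psi$. Evaluating at $\u=\x$, where $D_\Psi(\x,\x)=0$, gives $B_\Psi(\x)\ge -S_\Psi(\x)\ge 0$, confirming $B_\Psi\ge 0$. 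This part is essentially definitional; the only thing to verify carefully is the finiteness and attainment in the $\alpha$-maximization.

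Finally, for continuity of $B_\Psi$ I would invoke Proposition~\ref{prop:Breg_l0}, which provides the separable closed form $B_\Psi=\sum_n\beta_{\psi_n}$ with $\beta_{\psi_n}$ as in \eqref{eq:prop1_equa2}. On the open intervals $(\alpha_n^-,0)$ and $(0,\alpha_n^+)$ each middle branch is continuous because $\psi_n$ is convex and finite, hence continuous on $\operatorname{int}(\Cc)$, while the exterior branch is the constant $\lambda_0$. The only nontrivial point is continuity across the junctions: at $x=0$ both middle branches return $\psi_n(0)-\psi_n(0)+\psi_n'(\alpha_n^\pm)\cdot 0=0$, and at $x=\alpha_n^\pm$ the middle branch returns $\psi_n(0)-\psi_n(\alpha_n^\pm)+\psi_n'(\alpha_n^\pm)\alpha_n^\pm=d_{\psi_n}(0,\alpha_n^\pm)=\lambda_0$, matching the exterior value precisely because $\alpha_n^\pm$ lie on the boundary of the $\lambda_0$-sublevel set of $d_{\psi_n}(0,\cdot)$. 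Hence each $\beta_{\psi_n}$, and therefore $B_\Psi$, is continuous on the interior of its domain. I expect the main obstacle to be the one-dimensional analysis of $s_{\psi_n}$, specifically the case distinction at $z=0$ (where the supremum is governed by $x=0$ rather than by an interior minimizer, as happens for the Shannon and KL generators with $\psi_n'(0^+)=-\infty$) together with the boundary continuity of $d_{\psi_n}(0,\cdot)$; both hinge on the sublevel-set characterization of $\alpha_n^\pm$ already exploited in Proposition~\ref{prop:Breg_l0}.
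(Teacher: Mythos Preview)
Your argument for the composition identity \eqref{eq:B_Psi_composition}--\eqref{eq:c-transform-brex} is essentially identical to the paper's: fix $\z$, solve the affine maximization in $\alpha$ under the constraint, recognize the optimal value as $-S_\Psi(\z)$, then take the outer sup. This part is fine and matches the paper exactly.

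Where you diverge is on the two continuity claims. The paper handles these abstractly: it observes that $S_\Psi$ is a supremum of functions of the form ``l.s.c.\ plus continuous'' and then invokes standard properties of l.s.c.\ convex functionals (citing \cite[Proposition~3.2]{Carlsson2019}) to conclude continuity of $S_\Psi$, and similarly asserts that $B_\Psi$ is l.s.c.\ and nonnegative as a sup of such functions, with continuity on the interior of its domain following from the same general principle. You instead derive the explicit scalar formula $s_{\psi_n}(z)=-\min\{d_{\psi_n}(0,z),\lambda_0\}$ by a two-branch case analysis (which coincides with the intermediate step~\eqref{eq:proof_prop1_eq2} the paper uses later for Proposition~\ref{prop:Breg_l0}), and then for $B_\Psi$ you appeal directly to the closed form of Proposition~\ref{prop:Breg_l0} and check the junctions at $0$ and $\alpha_n^\pm$ by hand. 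Your route is more elementary and self-contained---it avoids the external reference and makes the verification fully explicit---while the paper's route is terser but relies on the reader accepting the cited black box. Both are valid; yours is arguably cleaner pedagogically, though it leans on Proposition~\ref{prop:Breg_l0} (which in the paper is proved independently, so there is no circularity).
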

 \begin{proof}
 The proof is given in Appendix~\ref{appendix:Bregman-composition-of-S}.
 \end{proof}
 
In \cite{Carlsson2019} the fact that the quadratic envelope corresponds to a double $S$-transform and its link to generalized conjugates is widely exploited to demonstrate exact relaxation results. However, the extension of these proofs to the non-quadratic case and to Bregman divergences turned-out to be very challenging (quadratics often lead to handy simplifications that do not hold anymore for more complex cases). We thus took a different route, inspired by~\cite{soubies2017unified,soubies2020direction}, to prove the exact relaxations properties of the proposed \BR{} penalty.



\subsection{Characterization of the critical points of $J_\Psi$}

    This section is devoted to the characterization of critical points of $J_\Psi$. This will be useful to describe the local minimizers of $J_0$ which are eliminated by $J_\Psi$ (Proposition~\ref{Local_minimizers_of_J0_preserved_by_JPsi}). We start  by providing the expression of the Clarke's subdifferential $\partial B_\Psi$ in the unconstrained case ($\Cc = \R$). It can be computed directly from the generalized derivative of the one-dimensional \BR{}, thanks to its separability. For all $\x \in \R^N$, we indeed have $\partial B_\Psi (\x) =\prod_{n \in [N]} \partial  \beta_{\psi_n} (x_n)$ with 
\begin{equation}\label{eq:CP_beta}
\partial \beta_{\psi_n}(x)=\begin{cases}
    -\psi_n'(x)+\psi_n'\left(\alpha_n^-\right), &  \text{if} \quad x \in \left[\alpha_n^- , 0 \right),  \\
    -\psi_n'(x)+\psi_n'\left(\alpha_n^+\right), &  \text{if} \quad x \in \left( 0, \alpha_n^+ \right],
    \\
    \left[\ell^-_n  , \ell^+_n  \right], & \text{if} \quad x=0,
    \\
    0, & \text{otherwise},
\end{cases}
\end{equation}
where $\left[ \ell_n^- , \ell_n ^+ \right]= \left[ \psi_n'\left(\alpha_n^- \right) -\psi_n'(0),\psi_n'\left(\alpha_n^+ \right) -\psi_n'(0)\right]$. Details of the computation are given in Appendix~\ref{appendix:formula_clark}.

In the case $\Cc = \R_{\geq 0}$, one can deduce, following the same calculations as in Appendix~\ref{appendix:formula_clark}, that
$\partial \beta_{\psi_n}(0) = \{\ell_n^+\}$. Thus, the subdifferential of $\beta_{\psi_n}$ is given by:
\begin{equation}\label{eq:CP_beta_R+}
\partial \beta_{\psi_n}(x)=\begin{cases}
    -\psi_n'(x)+\psi_n'\left(\alpha_n^+\right), &  \text{if} \quad x \in \left[ 0, \alpha_n^+ \right],
    \\
    0, & \text{if} \quad x\geq \alpha_n^+
    .
\end{cases}
\end{equation}


\begin{proposition}[Critical points of $J_\Psi$]\label{prop:critical_point_Jpsi} The point $\hat{\x} \in \Cc^N$ is a critical point of~$J_\Psi$ if and only if, $\forall n \in [N]$,
\begin{equation}\label{eq:critical_point}
 \begin{cases}
-\left<\a_n, \nabla F_\y \left( \A\hat{\x}\right)\right> \in \left[\ell_n^- ,\ell_n^+  \right] &  \text{if}\quad \; \hat{x}_n = 0 ,\\
\left<\a_n, \nabla F_\y \left( \A\hat{\x}\right)\right>+\lambda_2\hat{x}_n-\psi'_n(\hat{x}_n)+\psi'_n \left( \alpha_n^- \right)=0 & \text{if}\quad \; \hat{x}_n \in \left[ \alpha_n^-, 0 \right),\\
     \left<\a_n, \nabla F_\y \left( \A\hat{\x}\right)\right>+\lambda_2\hat{x}_n-\psi'_n(\hat{x}_n)+\psi'_n \left( \alpha_n^+ \right)=0 & \text{if}\quad \; \hat{x}_n \in \left(0, \alpha_n^+ \right],\\
     \left<\a_n, \nabla F_\y \left( \A\hat{\x}\right)\right>+\lambda_2\hat{x}_n=0 & \text{if}\quad \; \hat{x}_n \in \R \backslash \left[ \alpha_n^- , \alpha_n^+ \right]
 .    
 \end{cases}
\end{equation}
In the case $\Cc=\R_{\geq 0}$, we have $\alpha_n^-=0$ and $\ell_n^-=-\infty$.
\end{proposition}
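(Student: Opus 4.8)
The plan is to split $J_\Psi = G + B_\Psi$, where $G(\x):=F_\y(\A\x)+\tfrac{\lambda_2}{2}\|\x\|^2$ collects the smooth part and $B_\Psi(\x)=\sum_{n\in[N]}\beta_{\psi_n}(x_n)$ the nonsmooth one, and to characterize criticality through Clarke's subdifferential, declaring $\hat\x\in\Cc^N$ critical when $\mathbf 0\in\partial J_\Psi(\hat\x)$ (understood in the constrained case as $\mathbf 0\in\partial J_\Psi(\hat\x)+N_{\Cc^N}(\hat\x)$). By Assumption~\ref{assump} the map $G$ is continuously differentiable, with $\nabla G(\x)=\A^T\nabla F_\y(\A\x)+\lambda_2\x$, so its $n$-th coordinate is exactly $\langle\a_n,\nabla F_\y(\A\x)\rangle+\lambda_2 x_n$. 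The term $B_\Psi$ is continuous and piecewise $C^1$ with bounded one-sided slopes at its kink, hence locally Lipschitz, and its scalar Clarke subdifferentials are already available in~\eqref{eq:CP_beta} for $\Cc=\R$ and in~\eqref{eq:CP_beta_R+} for $\Cc=\R_{\geq 0}$.

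First I would invoke the Clarke sum rule. Since $G\in C^1$ is strictly differentiable, the sum rule holds \emph{with equality}, giving $\partial J_\Psi(\hat\x)=\nabla G(\hat\x)+\partial B_\Psi(\hat\x)$, and the separability of $B_\Psi$ yields $\partial B_\Psi(\hat\x)=\prod_{n\in[N]}\partial\beta_{\psi_n}(\hat x_n)$. Consequently the single inclusion $\mathbf 0\in\partial J_\Psi(\hat\x)$ decouples into the $N$ independent scalar conditions $0\in\big(\langle\a_n,\nabla F_\y(\A\hat\x)\rangle+\lambda_2\hat x_n\big)+\partial\beta_{\psi_n}(\hat x_n)$ for every $n\in[N]$; this exactness is what makes the equivalence in the statement an ``if and only if'' rather than a one-sided inclusion.

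Next I would carry out the case analysis by substituting~\eqref{eq:CP_beta} according to the position of $\hat x_n$. When $\hat x_n=0$ we have $\partial\beta_{\psi_n}(0)=[\ell_n^-,\ell_n^+]$ and $\lambda_2\hat x_n=0$, so the inclusion reads $-\langle\a_n,\nabla F_\y(\A\hat\x)\rangle\in[\ell_n^-,\ell_n^+]$. On each concave branch $\hat x_n\in[\alpha_n^-,0)$ and $\hat x_n\in(0,\alpha_n^+]$ the subdifferential is the singleton $-\psi_n'(\hat x_n)+\psi_n'(\alpha_n^\mp)$, and adding $\langle\a_n,\nabla F_\y(\A\hat\x)\rangle+\lambda_2\hat x_n$ produces the two displayed equalities. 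On the constant branch $\hat x_n\in\R\setminus[\alpha_n^-,\alpha_n^+]$ one has $\partial\beta_{\psi_n}(\hat x_n)=0$, leaving $\langle\a_n,\nabla F_\y(\A\hat\x)\rangle+\lambda_2\hat x_n=0$. This reproduces the four lines of~\eqref{eq:critical_point}.

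For the constrained case $\Cc=\R_{\geq 0}$ I would use~\eqref{eq:CP_beta_R+}, where $\partial\beta_{\psi_n}(0)=\{\ell_n^+\}$, together with the boundary normal cone $N_{\R_{\geq 0}}(0)=(-\infty,0]$; combining them collapses the scalar inclusion at $\hat x_n=0$ to $-\langle\a_n,\nabla F_\y(\A\hat\x)\rangle\le\ell_n^+$, i.e.\ precisely the first line of~\eqref{eq:critical_point} read with the conventions $\alpha_n^-=0$ and $\ell_n^-=-\infty$, while the interior branches are unchanged. The main care lies exactly here: pinning down the correct stationarity notion on $\Cc^N$ and verifying that the normal-cone contribution at the boundary reconciles with the $\ell_n^-=-\infty$ convention. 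Once the exactness of the Clarke sum rule and the product structure of $\partial B_\Psi$ are in place, the rest is a direct substitution.
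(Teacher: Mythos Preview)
Your proposal is correct and follows essentially the same route as the paper's proof in Appendix~\ref{appendix:charachterization_critical_points}: the paper likewise splits off the smooth part, invokes the Clarke sum rule with equality (citing Clarke's corollary for the sum of a differentiable function and a locally Lipschitz one), uses the separability of $B_\Psi$ to decouple into $N$ scalar inclusions, substitutes~\eqref{eq:CP_beta}, and for $\Cc=\R_{\geq 0}$ adds the normal cone via Rockafellar--Wets and combines it with~\eqref{eq:CP_beta_R+} exactly as you describe.
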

\begin{proof}
The proof is given in Appendix~\ref{appendix:charachterization_critical_points}.
\end{proof}

\subsection{Exact relaxations properties}\label{sec:exact_relax}

In this section, we study the relaxed functional $J_\Psi$ in~\eqref{eq:relaxation_Jpsi} obtained by replacing in \eqref{eq:problem_setting} the $\ell_0$ pseudo-norm with the \BR{} penalty. In particular, we are interested in studying the relationship between the minimizers of $J_\Psi$ and those of $J_0$. We first study the existence of global minimizers for $J_\Psi$ in the following theorem. 

\begin{theorem}[Existence of solutions to the relaxed problem]\label{coro:existence_min_global_jpsi}
 Let $F_\y$ be  coercive or $\lambda_2>0$. Then, the solution set of the relaxed problem with~$J_\Psi$  is nonempty.
\end{theorem}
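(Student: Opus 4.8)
The plan is to mirror the two-case structure used for $J_0$ in Theorem~\ref{th:existence-minimizers}, substituting the role of $\lambda_0\|\cdot\|_0$ by the \BR{} penalty $B_\Psi$. I would first dispose of the case $\lambda_2>0$ exactly as in the trivial branch of Theorem~\ref{th:existence-minimizers}: the quadratic term $\tfrac{\lambda_2}{2}\|\x\|_2^2$ renders $J_\Psi$ coercive, while $F_\y(\A\cdot)$ is bounded from below by Assumption~\ref{assump} and $B_\Psi\geq 0$ by Proposition~\ref{prop:Bregman-composition-of-S}. Combined with lower semicontinuity of $J_\Psi$—which holds because $F_\y(\A\cdot)$ is continuous and $B_\Psi$ is continuous, its separable components $\beta_{\psi_n}$ in Proposition~\ref{prop:Breg_l0} gluing continuously at $\alpha_n^\pm$ since $d_{\psi_n}(0,\alpha_n^\pm)=\lambda_0$—a standard Weierstrass argument produces a global minimizer.

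For the remaining case $\lambda_2=0$ with $F_\y$ coercive, coercivity of $J_\Psi$ fails in general: $B_\Psi$ is bounded ($0\leq B_\Psi\leq \lambda_0 N$) and $F_\y(\A\cdot)$ need not be coercive. I would therefore invoke the general existence result Theorem~\ref{th:existence-general} of Appendix~\ref{appendix:existence}, applied this time with the choice $\Phi(\cdot)=B_\Psi(\cdot)$ in the functional~\eqref{eq:jphi}, precisely as the proof of Theorem~\ref{th:existence-minimizers} applied it with $\Phi(\cdot)=\lambda_0\|\cdot\|_0$. The conclusion then follows verbatim, so the only task left is to certify that $B_\Psi$ satisfies the hypotheses the general theorem imposes on $\Phi$.

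The properties required are that $\Phi$ be proper, lower semicontinuous, bounded from below, and—the key feature for the asymptotically level stable argument of~\cite{AuslenderTeboulle}—eventually constant along rays, so that its recession function is nonnegative (in fact identically zero) in every direction. All of these are immediate for $B_\Psi$: it is continuous and nonnegative, and by Proposition~\ref{prop:Breg_l0} each component $\beta_{\psi_n}$ equals the constant $\lambda_0$ outside the compact interval $[\alpha_n^-,\alpha_n^+]$, whence $B_\Psi$ is bounded above and has vanishing recession. This is exactly the structural trait that drove the argument for $\lambda_0\|\cdot\|_0$, which is itself bounded by $\lambda_0 N$.

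I expect the main obstacle to be purely bookkeeping: confirming that Theorem~\ref{th:existence-general} is stated generally enough that its proof never exploits the combinatorial structure of $\|\cdot\|_0$, but only the boundedness and eventual constancy of $\Phi$ along recession directions of $\ker \A$. Once the recession behavior of $B_\Psi$ is matched to those hypotheses, the substitution $\Phi=B_\Psi$ is legitimate and the existence of a global minimizer of $J_\Psi$ follows.
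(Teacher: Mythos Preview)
Your proposal is correct and matches the paper's approach: invoke Theorem~\ref{th:existence-general} with $\Phi = B_\Psi$, whose structural hypotheses~\eqref{eq:cond_phi} on the components $\phi_n=\beta_{\psi_n}$ are met directly via Proposition~\ref{prop:Breg_l0}. Your worry that the general theorem might secretly rely on the combinatorics of $\|\cdot\|_0$ is unfounded---it is stated for any separable $\Phi$ satisfying~\eqref{eq:cond_phi}, and the paper's proof is literally the one-liner you anticipated (it even omits the trivial $\lambda_2>0$ branch that you spell out).
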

\begin{proof}
    The proof come directly by taking $\Phi(\cdot)=B_{\Psi}(\cdot)$ in Theorem~\ref{th:existence-general}.
\end{proof}

 We now provide a sufficient condition, referred to as \emph{concavity-condition}, on the Bregman generating functions $\psi_n$ to ensure that $J_\Psi$ is a continuous exact relaxation of $J_0$.
It reads,  for all $n \in [N]$ and $\x \in \Cc^N$ 
\begin{equation}\label{eq:general-exact-condition}
 g(t):=J_\Psi(\x^{(n)} + t \e_n) \; \text{is strictly concave on} \; (\alpha^-_n, 0) \; {\text{and}} \; (0, \alpha_n^+).
 \tag{CC}
\end{equation}
Since $f(\cdot;y)$ and $\psi_n$ are assumed to be twice differentiable on $\operatorname{int}(\Cc)$ and using the definition of \BR{} (Proposition~\ref{prop:Breg_l0}), the condition \eqref{eq:general-exact-condition} can be translated into its second order characterization which is
\begin{equation}\label{eq:2-diff-c}
g''(t)=\frac{\partial^2}{\partial t^2 } F_\y\left(\A\left(\x^{(n)}+t\e_n\right)\right) -\psi_n''(t)+\lambda_2 < 0 \; \text{for all} \; t \in (\alpha^-_n, 0) \cup (0, \alpha_n^+)
.
\end{equation}
\begin{theorem}[Sufficient condition for exact relaxation]\label{th:exact_relax}
    Let the family $\Psi = \{\psi_n\}_{n \in[N]}$ be such that condition~\eqref{eq:general-exact-condition} holds.
    Then, the functional $J_\Psi$ defined  in~\eqref{eq:relaxation_Jpsi}
     is an exact continuous relaxation of $J_0$. In other words, we have
     \begin{align}
         & \argmin_{\x \in \Cc^N} J_\Psi(\x) = \argmin_{\x \in \Cc^N} J_0(\x) \label{eq:result_global_min}\\
         & \hat{\x} \text{ local (not global) minimizer of } J_\Psi \; \Longrightarrow \; \hat{\x} \text{ local (not global) minimizer of } J_0 \label{eq:result_local_min}
     \end{align}
     Moreover, for each local minimizer $\hat \x$ of $J_\Psi$, we have $J_\Psi(\hat{\x}) = J_0(\hat{\x})$.
   
   \end{theorem}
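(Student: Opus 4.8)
The plan is to reduce everything to two ingredients: the global lower bound $J_\Psi \le J_0$, which comes for free, and a one–dimensional consequence of the concavity condition~\eqref{eq:general-exact-condition} that pins every local minimizer of $J_\Psi$ onto the set where $J_\Psi$ and $J_0$ coincide. First I would record that, directly from Definition~\ref{def:Breg_L0}, $B_\Psi$ is a supremum of functions dominated by $\lambda_0\|\cdot\|_0$, so $B_\Psi(\x)\le\lambda_0\|\x\|_0$ for every $\x\in\Cc^N$, whence $J_\Psi\le J_0$ pointwise. Using the closed form of Proposition~\ref{prop:Breg_l0}, I would then identify the \emph{agreement set}: since $\beta_{\psi_n}(0)=0$ and $\beta_{\psi_n}(x)=\lambda_0$ for $x\notin(\alpha_n^-,\alpha_n^+)$ (including the endpoints $\alpha_n^\pm$), one has $B_\Psi(\x)=\lambda_0\|\x\|_0$, hence $J_\Psi(\x)=J_0(\x)$, precisely when $x_n\notin(\alpha_n^-,0)\cup(0,\alpha_n^+)$ for all $n$.

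The crux is the claim that every local minimizer $\hat\x$ of $J_\Psi$ lies in this agreement set. Indeed, a local minimizer of $J_\Psi$ is in particular a local minimizer along each coordinate direction, so for each $n$ the map $g(t)=J_\Psi(\x^{(n)}+t\e_n)$ (with $\x=\hat\x$) has a local minimum at $t=\hat x_n$. By condition~\eqref{eq:general-exact-condition}, $g$ is strictly concave on each of the open intervals $(\alpha_n^-,0)$ and $(0,\alpha_n^+)$; since a strictly concave function admits no interior local minimizer (a local minimum at an interior point would contradict the strict chord inequality), $\hat x_n$ cannot lie in either open interval. Thus $\hat x_n\notin(\alpha_n^-,0)\cup(0,\alpha_n^+)$ for all $n$, so $\hat\x$ belongs to the agreement set and $J_\Psi(\hat\x)=J_0(\hat\x)$, which already establishes the ``moreover'' part. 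I expect this to be the main obstacle, in the sense that it is the only place where~\eqref{eq:general-exact-condition} is genuinely used; the remaining steps are bookkeeping with $J_\Psi\le J_0$. (The case $\Cc=\R_{\geq 0}$ is identical, with $\alpha_n^-=0$ so that only the interval $(0,\alpha_n^+)$ is relevant.)

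It then remains to assemble the minimizer statements. For~\eqref{eq:result_global_min}, I would take any $\x^\ast\in\argmin J_\Psi$ (nonempty by Theorem~\ref{coro:existence_min_global_jpsi}); the claim gives $J_0(\x^\ast)=J_\Psi(\x^\ast)=\min J_\Psi$, and combining with $\min J_\Psi\le\min J_0\le J_0(\x^\ast)$ forces $\min J_\Psi=\min J_0$ and $\x^\ast\in\argmin J_0$, i.e. $\argmin J_\Psi\subseteq\argmin J_0$. Conversely, for $\hat\x\in\argmin J_0$ the chain $J_\Psi(\hat\x)\le J_0(\hat\x)=\min J_0=\min J_\Psi$ shows $\hat\x\in\argmin J_\Psi$, giving the reverse inclusion.

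Finally, for~\eqref{eq:result_local_min}, let $\hat\x$ be a local (non-global) minimizer of $J_\Psi$. By the claim $J_\Psi(\hat\x)=J_0(\hat\x)$, so on a neighborhood of $\hat\x$ we obtain $J_0(\x)\ge J_\Psi(\x)\ge J_\Psi(\hat\x)=J_0(\hat\x)$, proving that $\hat\x$ is a local minimizer of $J_0$. It cannot be a global minimizer of $J_0$: otherwise the already-proved identity $\argmin J_0=\argmin J_\Psi$ would make it a global minimizer of $J_\Psi$, contradicting the hypothesis. This completes the argument.
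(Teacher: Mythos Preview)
Your proof is correct and follows essentially the same approach as the paper: both use the pointwise bound $J_\Psi\le J_0$ together with the one-dimensional strict concavity from~\eqref{eq:general-exact-condition} to force any local minimizer of $J_\Psi$ into the agreement set where $J_\Psi=J_0$, and then derive~\eqref{eq:result_global_min}--\eqref{eq:result_local_min} by the same bookkeeping (the paper phrases the last two steps as proofs by contradiction, while you argue directly via inequality chains, but the content is identical).
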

\begin{proof}
     The proof is deferred to Appendix~\ref{appendix:exact_relax}.
\end{proof}

The main idea behind the proposed condition \eqref{eq:general-exact-condition} 
is to set $\Psi$ such that the relaxation $J_\Psi$ cannot have minimizers with components within the intervals $(\alpha_n^-,0)$ and $(0,\alpha_n^+)$, for all $n \in [N]$, due to the strict concavity. As such, minimizers $\hat \x \in  \Cc^N$ of $J_\Psi$ are such that $J_\Psi(\hat{\x}) = J_0(\hat{\x})$ which, combined with the fact that $J_\Psi \leq J_0$ (by definition of $J_\Psi$), allows us to prove the stated result.

By a straightforward computation, for all $t \in (\alpha_n^-,\alpha_n^+) \backslash \{0\}$ we have $g''(t)=\sum_{m=1}^M a_{mn}^2 f''([\A\x^{(n)}]_m + t a_{mn};y_m)-\psi_n''(t)+\lambda_2$. Thus, one  can note that it suffices to set $\Psi$ such that 
 \begin{equation}\label{eq:sup_inf_cond}
 \forall n \in [N], \; \inf_{t \in (\alpha_n^-,\alpha_n^+) \backslash \{0\}} \psi_n''(t) >  \lambda_2+\sum_{m=1}^M a_{mn}^2 \sup_{z \in \mathrm{dom}(f(\cdot;y_m))} f''(z;y_m),
 \end{equation}
to impose the concavity of \BR{} to be  strictly larger than the convexity of $F_\y(\A \cdot) + \frac{\lambda_2}{2}\|\cdot\|^2$ on the required intervals. Condition~\eqref{eq:sup_inf_cond} is clearly coarser than~\eqref{eq:general-exact-condition}, although it presents the advantage of being easier to manipulate as it decouples quantities related to the data fidelity term to those associated to the regularizer. Although we
 will  resort to the use of this simplified condition in the following numerical illustrations, it is worth mentioning that, for a specific data fidelity term, better relaxations can be achieved by carefully selecting an appropriate family of $\mathbf{\Psi}$ functions exploiting condition~\eqref{eq:general-exact-condition} more tightly (a task we leave for future work).   All quantities involved in the exact relaxation condition~\eqref{eq:sup_inf_cond} for the data fidelity terms $F_\y$ of Table~\ref{tab:data_fidelty} and the generating functions $\psi_n$ of Table~\ref{tab:generating-functions} are gathered in Table~\ref{tab:exact-conditions}.

\begin{table}
\renewcommand{\arraystretch}{1.7} 
    \centering
    \begin{tabular}{|c|c||c|c|}
    \hline
  \cellcolor[HTML]{FFFFC7}     $\psi_n$  & \cellcolor[HTML]{FFFFC7}  
 $\inf\limits_{t \in (\alpha_n^{-},\alpha_n^{+}) \setminus \{0\}} \psi_n''(t)$ &
 \cellcolor[HTML]{FFFFC7} $ F_\y$ &  \cellcolor[HTML]{FFFFC7}  $\lambda_2+\sum_{m=1}^M a_{mn}^2 \sup_{z \in \Cc} f''(z;y_m)$ \\ \hline
    Power Function  & $\gamma_n^{2/p}\left(p\lambda_0 \right)^{\frac{p-2}{p}}, \; p \in ]1,2]$   & 
    LS  & $\lambda_2 + \|\a_n\|_2^2$\\ \hline
    Shannon Entropy & $\frac{\gamma_n^2}{\lambda_0}$  
    & LR  & $\lambda_2 + \frac14 \|\a_n\|_2^2$  \\ \hline
    Kullback-Leibler  & $\frac{\gamma_n}{b^2}W^2(-b\e^{-\kappa})$  
    & KL  & $\lambda_2 + \sum_{m=1}^M a_{mn}^2 \frac{y_m}{b^2}$\\ \hline
    \end{tabular}
    \caption{Quantities required for condition \eqref{eq:sup_inf_cond}  for the data terms $F_\y$ in Table~\ref{tab:data_fidelty} and the generating functions $\psi_n$ in Table~\ref{tab:generating-functions}. For the Kullback-Leibler case $W(\cdot)$ denotes the Lambert function and $\kappa=\frac{\lambda_0}{y\gamma_n}+\log(b)+1$.}
    \label{tab:exact-conditions}
\end{table}

\begin{coro}[Strict local minimizers of $J_\Psi$]\label{coro:stric_glob_min_Jpsi}
Under the condition \eqref{eq:general-exact-condition} 
the set of strict local minimizers of $J_\Psi$ is included in the set of strict local minimizers of~$J_0$. Moreover, global minimizers of $J_\Psi$ are strict.
\end{coro}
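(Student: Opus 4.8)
\emph{Plan.} The strategy is to combine the exact-relaxation identities of Theorem~\ref{th:exact_relax} with the strict-minimizer characterization of Theorem~\ref{thr:strict-minimizer} and the strictness of global minimizers of $J_0$ (Theorem~\ref{th:glob_min_J0_strict}). The two assertions are handled separately, the second being the quicker one.

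\emph{Global minimizers are strict.} I would let $\hat\x$ be a global minimizer of $J_\Psi$. By \eqref{eq:result_global_min} it is a global minimizer of $J_0$, hence a strict local minimizer of $J_0$ by Theorem~\ref{th:glob_min_J0_strict}; fix $\varepsilon>0$ witnessing this. Arguing by contradiction, if $\hat\x$ were not strict for $J_\Psi$, then, using that $\hat\x$ minimizes $J_\Psi$ globally, the negation of Definition~\ref{def:strict-minimizer} yields a sequence $\x_k\to\hat\x$, $\x_k\neq\hat\x$, with $J_\Psi(\x_k)=J_\Psi(\hat\x)$, so each $\x_k$ is again a global minimizer of $J_\Psi$ and, by \eqref{eq:result_global_min}, of $J_0$. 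For $k$ large $\x_k\in(\Ball(\hat\x;\varepsilon)\cap\Cc^N)\setminus\{\hat\x\}$, so $J_0(\x_k)=J_0(\hat\x)$ contradicts the strictness of $\hat\x$ for $J_0$. Hence $\hat\x$ is a strict local minimizer of $J_\Psi$.

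\emph{Strict minimizers of $J_\Psi$ are strict minimizers of $J_0$.} I would let $\hat\x$ be a strict local minimizer of $J_\Psi$ and set $\hat\sigma=\sigma(\hat\x)$. By Theorem~\ref{th:exact_relax} (via \eqref{eq:result_global_min} or \eqref{eq:result_local_min}) $\hat\x$ is a local minimizer of $J_0$, so by Theorem~\ref{thr:strict-minimizer} it suffices to prove $\operatorname{rank}(\A_{\hat\sigma})=\sharp\hat\sigma$ or $\lambda_2>0$. The case $\lambda_2>0$ is immediate; when $\lambda_2=0$ I argue by contradiction. If $\operatorname{rank}(\A_{\hat\sigma})<\sharp\hat\sigma$, pick $\d\neq\mathbf 0$ with $\sigma(\d)\subseteq\hat\sigma$ and $\A\d=\mathbf 0$, and examine $t\mapsto J_\Psi(\hat\x+t\d)$. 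Since $\A\d=\mathbf 0$ and $\lambda_2=0$, the data and ridge terms are constant along this line and $J_\Psi(\hat\x+t\d)=F_\y(\A\hat\x)+\sum_{n\in\hat\sigma}\beta_{\psi_n}(\hat x_n+td_n)$. Because $\hat x_n\neq0$ for $n\in\hat\sigma$, for $|t|$ small each $\hat x_n+td_n$ remains in a neighborhood of $\hat x_n$ avoiding the origin; on such a neighborhood $\beta_{\psi_n}$ is concave, since from Proposition~\ref{prop:Breg_l0} it is smooth and strictly concave on $(\alpha_n^-,0)$ and $(0,\alpha_n^+)$, constant outside $[\alpha_n^-,\alpha_n^+]$ with matching (zero) one-sided derivatives at $\alpha_n^\pm$, its only convex kink being at $0$. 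Thus $t\mapsto J_\Psi(\hat\x+t\d)$ is concave near $0$ with a local minimum there, and the elementary fact that a concave function with an interior local minimum is locally constant forces $J_\Psi(\hat\x+t\d)=J_\Psi(\hat\x)$ for small $t$, contradicting strictness. Hence $\operatorname{rank}(\A_{\hat\sigma})=\sharp\hat\sigma$ and Theorem~\ref{thr:strict-minimizer} concludes.

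\emph{Main obstacle.} The delicate ingredient is the concavity analysis of $\beta_{\psi_n}$ away from the origin and the observation that along the null direction $\d$ the relaxed objective collapses to a sum of such concave profiles; the feasibility $\hat\x+t\d\in\Cc^N$ for both signs of $t$ is automatic (in the case $\Cc=\R_{\geq0}$ because $\hat x_n>0$ on $\hat\sigma$ while $d_n=0$ off $\hat\sigma$). The global-minimizer part, by contrast, is a short consequence of the equality of global-minimizer sets.
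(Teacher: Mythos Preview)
Your proof is correct, but for the first assertion you take a significantly longer route than the paper does. The paper's argument is a one-liner: if $\hat\x$ is a strict local minimizer of $J_\Psi$, then by Theorem~\ref{th:exact_relax} it is a local minimizer of $J_0$ with $J_\Psi(\hat\x)=J_0(\hat\x)$, and since $J_\Psi\le J_0$ everywhere, strictness for $J_\Psi$ gives
\[
J_0(\x)\ \ge\ J_\Psi(\x)\ >\ J_\Psi(\hat\x)\ =\ J_0(\hat\x)
\]
for all $\x$ in a punctured neighborhood, so $\hat\x$ is strict for $J_0$. Your detour through the rank condition of Theorem~\ref{thr:strict-minimizer} and the concavity of $\beta_{\psi_n}$ along null directions is valid (your analysis of $\beta_{\psi_n}$ being concave on each half-line away from the origin is correct), but it reproduces by hand what the inequality $J_\Psi\le J_0$ yields for free. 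The payoff of your approach is that it makes the rank characterization explicit, anticipating Proposition~\ref{Local_minimizers_of_J0_preserved_by_JPsi} and Corollary~\ref{coro:critical_poin_to_loc_min}; the paper's approach is shorter and uses only what is already available.

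For the second assertion your argument is correct and in fact fills in what the paper leaves implicit. The paper only cites Theorems~\ref{th:glob_min_J0_strict} and~\ref{th:exact_relax}, which immediately give that a global minimizer of $J_\Psi$ is a strict local minimizer of $J_0$; your sequence argument (any non-strictness would produce nearby global minimizers of $J_\Psi$, hence of $J_0$, contradicting strictness for $J_0$) is the clean way to upgrade this to strictness for $J_\Psi$ itself.
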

\begin{proof}
    The first statement is a direct consequence of Theorem~\ref{th:exact_relax} together with the fact that $J_\Psi \leq J_0$. The second statement is a consequence of Theorems~\ref{th:glob_min_J0_strict}  and~\ref{th:exact_relax}.
\end{proof}

\begin{remark}
    We can let the inequality in~\eqref{eq:2-diff-c} (equivalently in~\eqref{eq:sup_inf_cond}) be non-strict. In that case, as with the CEL$0$ penalty for quadratic data terms~\cite{Soubies2015}, we can map each minimizer (local and global) $\hat \x \in \Cc^N$ of the relaxation $J_\Psi$ to one of $J_0$ by thresholding to $0$ each  component $\hat{x}_n$ that belongs to the interval $(\alpha_n^-,\alpha_n^+) \setminus \{0\}$,  $n \in [N]$ (if any). In other words, $\hat{\x}_0 \in \Cc^N$ defined as
    \begin{equation}
        \forall n \in [N], \; [\hat{\x}_0]_n = \left\lbrace
        \begin{array}{ll}
          0   & \text{ if } \hat{x}_n \in (\alpha_n^-,\alpha_n^+) \setminus \{0\} \\
          \hat{x}_n   & \text{ otherwise,}
        \end{array}\right.
    \end{equation}
    is a minimizer of $J_0$. This is due to the fact that, with such a relaxed (non-strict) condition~\eqref{eq:general-exact-condition}, some 1D restrictions of $J_\Psi$ to a variable $x_n$ may be constant (instead of strictly concave) over $[\alpha_n^-,0)$ or $(0,\alpha_n^+]$. It is worth mentioning that, in this case, Corollary~\ref{coro:stric_glob_min_Jpsi} does not hold anymore.
\end{remark}

From Theorem~\ref{th:exact_relax}, we get that $J_\Psi$ can remove some local (not global) minimizers of $J_0$. In Proposition~\ref{Local_minimizers_of_J0_preserved_by_JPsi}, we derive a necessary and sufficient condition for a local minimizer of $J_0$ to be preserved by $J_\Psi$. 

\begin{proposition}[Local minimizers of $J_0$ preserved by $J_\Psi$]\label{Local_minimizers_of_J0_preserved_by_JPsi} Let $\hat{\x}$ be a local minimizer of $J_0$. Then, under condition \eqref{eq:general-exact-condition}
, $\hat \x$ is a local minimizer of $J_\Psi$ if and only if 
\begin{align}
  &\forall n \in \sigma(\hat{\x}), \; \hat{x}_n \in \Cc \backslash  [\alpha_n^-,\alpha_n^+], \label{eq:condition1} \\
  & \forall n \in \sigma^c(\hat{\x}), \; -\left<\a_n ,\nabla F_\y\left(\A\hat{\x} \right) \right>  \in \left[ \ell_n^-,\ell_n^+  \right],
  \label{eq:condition2}
\end{align}
where $\ell_n^-=-\psi_n'(0)+\psi_n'(\alpha_n^-)$ in the case $\Cc=\R$ and $\ell_n^-=-\infty$ in the case $\Cc=\R_{\geq 0}$.

Moreover, $\hat{\x}$ is a strict local minimizer of $J_\Psi$ (and thus of $J_0$) if and only if, in addition to the above two conditions, $\lambda_2 >0$ or $\mathrm{rank}(\A_{\hat{\sigma}}) = \sharp \hat{\sigma}$.
\end{proposition}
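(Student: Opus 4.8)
The plan is to treat the characterization and the strictness claim separately, using the critical-point description of $J_\Psi$ (Proposition~\ref{prop:critical_point_Jpsi}) as the backbone and the concavity condition~\eqref{eq:general-exact-condition} to pass from stationarity to (non-)minimality. Throughout I will exploit that $B_\Psi=\sum_n\beta_{\psi_n}$ is separable (Proposition~\ref{prop:Breg_l0}), that $H:=F_\y(\A\cdot)+\tfrac{\lambda_2}{2}\|\cdot\|^2$ is convex and smooth, and that, since $\hat\x$ is a local minimizer of $J_0$, Corollary~\ref{coro:criter_locmin_j0} gives $\langle\a_n,\nabla F_\y(\A\hat\x)\rangle+\lambda_2\hat{x}_n=0$ for every $n\in\sigma(\hat\x)$.

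For necessity, suppose $\hat\x$ is a local minimizer of $J_\Psi$; it is then in particular a critical point, so Proposition~\ref{prop:critical_point_Jpsi} applies coordinatewise. For $n\in\sigma^c(\hat\x)$ the relevant case ($\hat{x}_n=0$) is exactly~\eqref{eq:condition2}. For $n\in\sigma(\hat\x)$ I must rule out $\hat{x}_n\in[\alpha_n^-,\alpha_n^+]$: if $\hat{x}_n$ lay in the open set $(\alpha_n^-,0)\cup(0,\alpha_n^+)$, the section $t\mapsto J_\Psi(\hat\x^{(n)}+t\e_n)$ would be strictly concave near $\hat{x}_n$ by~\eqref{eq:general-exact-condition}, contradicting minimality; if $\hat{x}_n\in\{\alpha_n^-,\alpha_n^+\}$, I combine the stationarity $\langle\a_n,\nabla F_\y(\A\hat\x)\rangle+\lambda_2\hat{x}_n=0$ with the facts that $\beta_{\psi_n}'$ vanishes at $\alpha_n^\pm$ and that the section is strictly concave just inside the interval, so the section strictly decreases as one moves from $\alpha_n^\pm$ toward $0$, again a contradiction. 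This forces $\hat{x}_n\in\Cc\setminus[\alpha_n^-,\alpha_n^+]$, i.e.~\eqref{eq:condition1}.

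For sufficiency, assume~\eqref{eq:condition1}--\eqref{eq:condition2}. Checking the four cases of Proposition~\ref{prop:critical_point_Jpsi} first shows that $\hat\x$ is a critical point: on $\sigma(\hat\x)$, condition~\eqref{eq:condition1} places $\hat{x}_n$ in the region where the critical-point equation reduces to the $J_0$-stationarity of Corollary~\ref{coro:criter_locmin_j0}, while on $\sigma^c(\hat\x)$ it is~\eqref{eq:condition2}. To upgrade stationarity to minimality I localize: by~\eqref{eq:condition1} each $\beta_{\psi_n}$ with $n\in\sigma(\hat\x)$ is constant ($=\lambda_0$) near $\hat{x}_n$, so for small $\d$ one has $J_\Psi(\hat\x+\d)-J_\Psi(\hat\x)=[H(\hat\x+\d)-H(\hat\x)]+\sum_{n\in\sigma^c(\hat\x)}\beta_{\psi_n}(d_n)$. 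Convexity of $H$ together with the stationarity identities (which annihilate the $\sigma(\hat\x)$-components of $\nabla H(\hat\x)$) then reduces the claim to the nonnegativity, for small $d_n$, of the scalar maps $d_n\mapsto\langle\a_n,\nabla F_\y(\A\hat\x)\rangle d_n+\beta_{\psi_n}(d_n)$, $n\in\sigma^c(\hat\x)$; read as the inclusion $0\in\langle\a_n,\nabla F_\y(\A\hat\x)\rangle+\partial\beta_{\psi_n}(0)=\langle\a_n,\nabla F_\y(\A\hat\x)\rangle+[\ell_n^-,\ell_n^+]$ via~\eqref{eq:CP_beta}, condition~\eqref{eq:condition2} supplies exactly the sign at the kink of $\beta_{\psi_n}$ at the origin (the case $\Cc=\R_{\geq0}$ being identical with the left branch collapsed, $\ell_n^-=-\infty$).

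Finally, for the strictness statement I would track where these inequalities are strict. Along the off-support directions, the strict signs $\ell_n^-<0<\ell_n^+$ make the kink of $\beta_{\psi_n}$ at $0$ strict, so perturbations with nonzero $\sigma^c(\hat\x)$-component strictly increase $J_\Psi$; strictness thus reduces to strictness along the support block, i.e.\ to strict convexity of $\z\mapsto F_\y(\A_{\hat\sigma}\z)+\tfrac{\lambda_2}{2}\|\z\|^2$ at $\hat\x_{\hat\sigma}$, governed by positive definiteness of $\A_{\hat\sigma}^T\nabla^2F_\y(\A\hat\x)\A_{\hat\sigma}+\lambda_2\mathbf{I}$. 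Since $\nabla^2F_\y\succ0$ by strict convexity of $f(\cdot;y)$, this holds iff $\operatorname{rank}(\A_{\hat\sigma})=\sharp\hat\sigma$ or $\lambda_2>0$, matching the dichotomy of Theorem~\ref{thr:strict-minimizer}. The main obstacle I anticipate is precisely this passage from stationarity to genuine (then strict) minimality: the concave penalty $B_\Psi$ competes with the convex $H$, so the argument must be carried out coordinatewise on the one-dimensional sections, and the delicate point is the boundary behaviour at the endpoints $\alpha_n^\pm$ and $\ell_n^\pm$, where the strict concavity furnished by~\eqref{eq:general-exact-condition} is what decides minimality.
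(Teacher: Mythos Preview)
Your plan follows the paper's: necessity via Proposition~\ref{prop:critical_point_Jpsi} plus~\eqref{eq:general-exact-condition} (and your endpoint exclusion for~\eqref{eq:condition1} is actually cleaner than the paper's terse treatment), sufficiency by linearizing $H$ and exploiting separability of $B_\Psi$, strictness reduced to the support block via Theorem~\ref{thr:strict-minimizer}.

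The sufficiency step has a genuine gap. You reduce to showing each scalar map $h_n(d_n)=g_n d_n+\beta_{\psi_n}(d_n)$ (with $g_n=\langle\a_n,\nabla F_\y(\A\hat\x)\rangle$) is nonnegative near $0$, and then invoke the inclusion $0\in g_n+\partial\beta_{\psi_n}(0)$ supplied by~\eqref{eq:condition2}. But $\partial\beta_{\psi_n}(0)$ is the \emph{Clarke} subdifferential, and $\beta_{\psi_n}$---hence $h_n$---is \emph{strictly concave} on each of $(0,\alpha_n^+)$ and $(\alpha_n^-,0)$; Clarke stationarity of a concave function does not yield a local minimum. At the boundary of~\eqref{eq:condition2}, say $-g_n=\ell_n^+$, you get $h_n'(0^+)=0$ with $h_n''<0$, so $h_n<0$ on $(0,\alpha_n^+)$; and the full section $t\mapsto J_\Psi(\hat\x+t\e_n)$ is likewise strictly concave there by~\eqref{eq:general-exact-condition} with zero right-derivative, so $\hat\x$ is then \emph{not} a local minimizer of $J_\Psi$. (Concretely: $N=M=1$, $A=1$, least squares, $\psi(x)=\gamma x^2/2$ with $\gamma>1$, $y=\ell^+=\sqrt{2\gamma\lambda_0}$; then $\hat x=0$ satisfies~\eqref{eq:condition1}--\eqref{eq:condition2} yet $J_\Psi$ strictly decreases on $(0,\alpha^+)$.) Your strictness paragraph inherits the confusion: the ``strict kink'' $\ell_n^-<0<\ell_n^+$ you cite is that of $\beta_{\psi_n}$, not of $h_n$, whose one-sided slopes $g_n+\ell_n^\pm$ need not be strictly signed under the closed-interval~\eqref{eq:condition2}. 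The paper's route---a chord lower bound $\beta_{\psi_n}(\varepsilon_n)\ge(\lambda_0/\alpha_n^+)\varepsilon_n$ in place of your tangent argument---hits the same wall: it requires $\lambda_0/\alpha_n^+>\ell_n^+$, but for a strictly concave $\beta_{\psi_n}$ the right-derivative at $0$ exceeds the chord slope, so in fact $\ell_n^+>\lambda_0/\alpha_n^+$. Both arguments do go through if~\eqref{eq:condition2} is read with the \emph{open} interval $(\ell_n^-,\ell_n^+)$: then $g_n+\ell_n^+>0$ and $g_n+\ell_n^-<0$, which is exactly what forces $h_n\ge0$ near $0$ despite the concavity.
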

\begin{proof}
The proof is presented in~Appendix \ref{appendix:jpsi-removes-loc}.
\end{proof}

Hence $J_\Psi$ eliminates all the local minimizers of $J_0$ having at least one non-zero component within an interval $[\alpha_n^-,\alpha_n^+]$ or for which at least one partial derivative of the data fidelity term $F_\y$ associated to an off-support variable (i.e., $n \in \sigma^c(\hat{\x})$) has a too large amplitude (outside of $\left[ - \ell_n^+, -\ell_n^- \right]$).

To conclude this section, we provide in Corollary~\ref{coro:critical_poin_to_loc_min} a necessary and sufficient condition to recognize a local minimizer of $J_\Psi$. This is of practical interest as, in general, on the shelf non-convex optimization algorithms only ensure the convergence to critical points (necessary but not sufficient condition to be a local minimizer).
\begin{coro}\label{coro:critical_poin_to_loc_min}
    Under the condition \eqref{eq:general-exact-condition}, $\hat \x \in \Cc^N$ is a local minimizer of $J_\Psi$ if and only if $\hat \x$ is a critical point of $J_\Psi$ and $\forall n \in \sigma(\hat{\x}), \; \hat{x}_n \in \Cc \backslash  [\alpha_n^-,\alpha_n^+]$. Moreover it is a strict local minimizer  if and only if, in addition to the above  condition, $\lambda_2 >0$ or $\mathrm{rank}(\A_{\hat{\sigma}}) = \sharp \hat{\sigma}$.
\end{coro}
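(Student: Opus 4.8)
The plan is to establish the equivalence by stitching together the critical-point characterization of Proposition~\ref{prop:critical_point_Jpsi}, the exact-relaxation Theorem~\ref{th:exact_relax}, and the preservation result of Proposition~\ref{Local_minimizers_of_J0_preserved_by_JPsi}, using Corollary~\ref{coro:criter_locmin_j0} as the bridge that turns critical points of $J_\Psi$ into local minimizers of $J_0$. The strict part will then be read off directly from the last assertion of Proposition~\ref{Local_minimizers_of_J0_preserved_by_JPsi}.

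For the forward implication I would start from $\hat{\x}$ being a local minimizer of $J_\Psi$. By Fermat's rule it is then a critical point of $J_\Psi$, which gives one half of the right-hand side. For the support condition, I would invoke Theorem~\ref{th:exact_relax}: its two conclusions \eqref{eq:result_global_min} and \eqref{eq:result_local_min} together guarantee that \emph{every} local minimizer of $J_\Psi$ (global or not) is also a local minimizer of $J_0$. The hypotheses of Proposition~\ref{Local_minimizers_of_J0_preserved_by_JPsi} are therefore met, so conditions \eqref{eq:condition1} and \eqref{eq:condition2} hold; in particular \eqref{eq:condition1} is precisely the required statement that $\hat{x}_n \in \Cc \setminus [\alpha_n^-,\alpha_n^+]$ for all $n \in \sigma(\hat{\x})$.

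For the converse I would assume $\hat{\x}$ is a critical point of $J_\Psi$ satisfying $\hat{x}_n \in \Cc \setminus [\alpha_n^-,\alpha_n^+]$ for every $n \in \sigma(\hat{\x})$. The crucial observation is that this support hypothesis deactivates the two ``strictly concave'' branches of the critical-point system \eqref{eq:critical_point}, leaving only two active cases: for $n \in \sigma(\hat{\x})$ it reduces to $\langle \a_n, \nabla F_\y(\A\hat{\x})\rangle + \lambda_2 \hat{x}_n = 0$, and for $n \in \sigma^c(\hat{\x})$ it is exactly the membership \eqref{eq:condition2}. Since $\A\hat{\x} = \A_{\hat{\sigma}}\hat{\x}_{\hat{\sigma}}$, the first family of equations is nothing but the stationarity condition $\A_{\hat{\sigma}}^T \nabla F_\y(\A_{\hat{\sigma}}\hat{\x}_{\hat{\sigma}}) + \lambda_2 \hat{\x}_{\hat{\sigma}} = \mathbf{0}$ of Corollary~\ref{coro:criter_locmin_j0}, which certifies that $\hat{\x}$ is a local minimizer of $J_0$. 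Now that $\hat{\x}$ is a local minimizer of $J_0$ satisfying both \eqref{eq:condition1} (the support hypothesis) and \eqref{eq:condition2} (from the off-support critical-point equations), Proposition~\ref{Local_minimizers_of_J0_preserved_by_JPsi} returns that $\hat{\x}$ is a local minimizer of $J_\Psi$.

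The strict-minimizer claim then follows verbatim from the second assertion of Proposition~\ref{Local_minimizers_of_J0_preserved_by_JPsi}, which appends exactly the condition $\lambda_2 > 0$ or $\operatorname{rank}(\A_{\hat{\sigma}}) = \sharp\hat{\sigma}$. The main delicacy I anticipate lies in the converse, namely the careful case-matching showing that the support hypothesis leaves only the ``otherwise'' branch of \eqref{eq:critical_point} active on $\sigma(\hat{\x})$ so that the system collapses precisely onto the first-order condition of Corollary~\ref{coro:criter_locmin_j0}; I would also pay attention to the sign conventions in the case $\Cc = \R_{\geq 0}$, where $\alpha_n^- = 0$ and $\ell_n^- = -\infty$, to confirm that the off-support condition of \eqref{eq:critical_point} and condition \eqref{eq:condition2} coincide there as well.
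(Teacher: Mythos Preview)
Your proof is correct, and the $(\Longleftarrow)$ direction coincides with the paper's: both reduce to the first bullet of the proof of Proposition~\ref{Local_minimizers_of_J0_preserved_by_JPsi}, after noting that the critical-point system~\eqref{eq:critical_point} together with the support hypothesis reproduces exactly~\eqref{eq:proof_removed_min_loc-1}, whence Corollary~\ref{coro:criter_locmin_j0} certifies that $\hat{\x}$ is a local minimizer of $J_0$ and Proposition~\ref{Local_minimizers_of_J0_preserved_by_JPsi} closes the loop.

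For the $(\Longrightarrow)$ direction your route differs from the paper's. The paper argues directly on the 1D restrictions $t \mapsto J_\Psi(\x^{(n)}+t\e_n)$: strict concavity on $(\alpha_n^-,0)\cup(0,\alpha_n^+)$ rules out the open interval, and a separate saddle-point argument (strict concavity on one side, convexity on the other) is needed to exclude the endpoints $\alpha_n^\pm$ themselves. You instead pass through Theorem~\ref{th:exact_relax} to conclude that $\hat{\x}$ is a local minimizer of $J_0$, and then read off condition~\eqref{eq:condition1} from Proposition~\ref{Local_minimizers_of_J0_preserved_by_JPsi}. Your route is slightly cleaner because the closed-interval exclusion $\hat{x}_n \notin [\alpha_n^-,\alpha_n^+]$ comes for free from~\eqref{eq:condition1}, with no need for the endpoint analysis; the paper's route is more self-contained in that it does not invoke the full exact-relaxation theorem.
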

\begin{proof}
    The implication ($\Longleftarrow$) is already demonstrated in the proof of Proposition~\ref{Local_minimizers_of_J0_preserved_by_JPsi} (first bullet). Regarding the reverse implication ($\Longrightarrow$), we have that if $\hat \x \in \Cc^N$ is a local minimizer of $J_\Psi$ then it is a critical point. Moreover, under the condition \eqref{eq:general-exact-condition}, we have $\forall n \in \sigma(\hat{\x}), \; \hat{x}_n \in \Cc \backslash  (\alpha_n^-,\alpha_n^+)$. Finally, the fact that the points $\alpha_n^-$ and  $\alpha_n^+$ are also excluded is due to fact that the 1D restrictions in condition~\eqref{eq:general-exact-condition} 
     are convex over $(-\infty, \alpha_n^-]$ and $[\alpha_n^+, +\infty)$. As such, if either  $\alpha_n^-$ or  $\alpha_n^+$ is a critical point for one of these 1D restrictions, then it is a saddle point at the interface between a strictly concave region and a convex region of this 1D restriction. It cannot thus be a local minimizer.
\end{proof}

\change{
\paragraph{A boarder class of exact penalties} As already mentioned, our primary motivation for \BR{} is to adapt the geometry of the penalty to the one of the data term through the choice of the generating function $\psi_n$ (see also Proposition~\ref{prop2_section2}). Yet, one may observe that the exact relaxation result of Theorem~\ref{th:exact_relax} does not explicitly depends on $\psi_n$. In Corollary~\ref{coro:exact_folded} below, we thus report that under some assumptions more general folded concave penalties~\cite{Fan2012STRONGOO} can lead to exact relaxations.

\begin{coro}\label{coro:exact_folded}
    Let $J_\Phi(\x) := F_\y(\A\x) + \Phi(\x) + \frac{\lambda_2}{2}\|\x\|^2$, where $\Phi(\x) = \sum_{n=1}^N \phi_n(x_n)$ and each $\phi_n: \Cc \to \R$ is a folded concave penalty satisfying $\phi_n(0)=0$ and $\phi_n(x) = \lambda_0$ for all $x \in \Cc \setminus (\eta_n^-, \eta_n^+)$ and some $\eta^-_n<0$ and $\eta^+_n>0$. If in addition we have that 
    \begin{equation}\label{eq:general-exact-condition}
        g(t) := J_\Phi(\x^{(n)} + t \e_n) \text{ is strictly concave on } (\eta_n^-, 0) \text{ and } (0, \eta_n^+),
    \end{equation}
    then $J_\Phi$ is an exact continuous relaxation of $J_0$.
\end{coro}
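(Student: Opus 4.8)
The plan is to recognize that the proof of Theorem~\ref{th:exact_relax} never really uses the Bregman origin of $\beta_{\psi_n}$: it only exploits that $\beta_{\psi_n}$ is a folded concave penalty vanishing at the origin, equal to $\lambda_0$ outside $(\alpha_n^-,\alpha_n^+)$, bounded above by $\lambda_0\|\cdot\|_0$, together with the strict concavity of the $1$D restrictions. Since each $\phi_n$ is assumed to share exactly these structural properties, with $\eta_n^\pm$ in the role of $\alpha_n^\pm$, I would reproduce that argument verbatim with $\phi_n$ in place of $\beta_{\psi_n}$, so the corollary is essentially a re-reading of the theorem's proof.

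First I would establish $J_\Phi \le J_0$, i.e. $\Phi(\x)\le\lambda_0\|\x\|_0$ on $\Cc^N$, which reduces to $0\le\phi_n\le\lambda_0$ with $\phi_n(0)=0$. Concavity of $\phi_n$ on $[0,\infty)$ (and on $(-\infty,0]$ when $\Cc=\R$) forces its one-sided derivative to be non-increasing; matching the constant value $\lambda_0$ beyond $\eta_n^+$ then makes $\phi_n$ non-decreasing and bounded by $\lambda_0$ on $[0,\eta_n^+]$ (symmetrically on the negative side). Hence $\phi_n(x)\le\lambda_0=\lambda_0|x|_0$ for $x\neq0$ while $\phi_n(0)=0$, which is the standard folded-concave setting of \cite{Fan2012STRONGOO}.

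The core step is to show that every local minimizer $\hat\x$ of $J_\Phi$ satisfies $J_\Phi(\hat\x)=J_0(\hat\x)$. Fixing $n$, the restriction $g(t)=J_\Phi(\x^{(n)}+t\e_n)$ attains a local minimum at $t=\hat x_n$; since $g$ is strictly concave on $(\eta_n^-,0)$ and on $(0,\eta_n^+)$, and a strictly concave function has no interior local minimizer, we obtain $\hat x_n\notin(\eta_n^-,0)\cup(0,\eta_n^+)$. In each remaining case, namely $\hat x_n=0$, $\hat x_n=\eta_n^\pm$, or $\hat x_n\in\Cc\setminus[\eta_n^-,\eta_n^+]$, one has $\phi_n(\hat x_n)=\lambda_0|\hat x_n|_0$; summing over $n$ gives $\Phi(\hat\x)=\lambda_0\|\hat\x\|_0$, hence $J_\Phi(\hat\x)=J_0(\hat\x)$. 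The hard part is the behavior at the breakpoints $\eta_n^\pm$: I would treat them exactly as in Corollary~\ref{coro:critical_poin_to_loc_min}, observing that they sit at the interface between a strictly concave and a convex region and thus cannot be strict local minimizers; crucially, this subtlety does not affect (P1)--(P2), because $\phi_n(\eta_n^\pm)=\lambda_0$ already supplies the equality above even if a breakpoint happened to be a (non-strict) minimizer.

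From here the two exact-relaxation properties follow by the usual sandwiching. Invoking the general existence result (Theorem~\ref{th:existence-general} with $\Phi$ in place of $B_\Psi$) to obtain a global minimizer $\hat\x^\Phi$ of $J_\Phi$, the identity $J_\Phi(\hat\x^\Phi)=J_0(\hat\x^\Phi)$ together with $J_\Phi\le J_0$ yields $\min J_\Phi=\min J_0$. Equality of the minimizer sets~\eqref{eq:result_global_min} then follows in both directions from $J_\Phi\le J_0$ and this value equality, while the implication~\eqref{eq:result_local_min} follows because a local minimizer $\hat\x$ of $J_\Phi$ satisfies $J_0\ge J_\Phi\ge J_\Phi(\hat\x)=J_0(\hat\x)$ on a neighborhood, making it a local minimizer of $J_0$, and globality is transferred using $\min J_\Phi=\min J_0$ precisely as in the proof of Theorem~\ref{th:exact_relax}.
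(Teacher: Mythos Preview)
Your proposal is correct and follows essentially the same approach as the paper: the paper's proof of Corollary~\ref{coro:exact_folded} simply invokes existence of a global minimizer (via Theorem~\ref{th:existence-general} when $\lambda_2=0$, trivially when $\lambda_2>0$) and then states that the arguments of Theorem~\ref{th:exact_relax} carry over verbatim. You unpack those arguments accurately, and your extra care in deriving $0\le\phi_n\le\lambda_0$ from the folded-concave structure is a welcome addition, since the corollary's hypotheses do not state this bound explicitly.
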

\begin{proof}
    By assumptions, the set of global minimizers of $J_\Phi$ is nonempty. This is trivial for $\lambda_2>0$ and given by Theorem~\ref{th:existence-general} for $\lambda_2=0$. Then the proof follows exactly the same arguments as the ones of Theorem~\ref{th:exact_relax}.
\end{proof}
}

\section{On the choice of the Bregman distance}\label{sec:choices}

In this section, we comment on the choice of the family $\Psi$ of generating functions used to define \BR{} (Definition \ref{def:Breg_L0}). We distinguish two cases. The first one corresponds to the situation where $\A$ is diagonal. Here, we show that, for a suitable choice of $\Psi$, the proposed relaxation $J_\Psi$ is nothing but the convex envelope of $J_0$, that is, the largest convex function bounding $J_0$ from below. Then, we provide few comments on the more complex case  where  $\A$ is not diagonal.


\subsection{Case $\A$ diagonal: Link with convex envelopes. }

Without loss of generality, we only discuss  the case $\A = \mathbf{I}$. Here, $J_0$ is separable and we thus restrict our analysis to the 1D case by drooping the index $n$.
In Proposition~\ref{prop2_section2} we prove that upon a particular choice of the generating function $\psi$ consistent with the functional made of the data fidelity term plus the ridge regularization, $J_\psi$ is indeed the convex envelope of $J_0$. 

\begin{proposition}\label{prop2_section2}
Let $\gamma>0$ and set $\psi(\cdot):=\gamma\left(f(\cdot;y)+\frac{\lambda_2}{2}|\cdot|^2\right)$. Then,  $\beta_{\psi}(\cdot)+\gamma\left( f(\cdot;y)+\frac{\lambda_2}{2}|\cdot|^2 \right)$ is the l.s.c.~convex envelope of $\lambda_0  |\cdot|_0+\gamma\left( f(\cdot;y)+\frac{\lambda_2}{2}|\cdot|^2 \right)$, that is:
\begin{equation} \label{eq:prposition2_envelope_convex}
\left( \lambda_0 | \cdot |_0 + \gamma\left( f(\cdot;y)+\frac{\lambda_2}{2}|\cdot|^2 \right) \right) ^{**} (x)= \beta_{\psi} (x)+\gamma\left( f(x;y)+\frac{\lambda_2}{2}x^2 \right),\quad\forall x\in \R
.
\end{equation}
\end{proposition}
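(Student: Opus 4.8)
The plan is to identify the claimed identity with a statement about the Fenchel biconjugate. Write $g := \gamma\bigl(f(\cdot;y) + \tfrac{\lambda_2}{2}|\cdot|^2\bigr) = \psi$ and $h := \lambda_0|\cdot|_0 + g$. Since $g$ is bounded from below (Assumption~\ref{assump}) and $|\cdot|_0 \ge 0$, the function $h$ is proper and minorized by a constant, hence by an affine function; Fenchel--Moreau then guarantees that $h^{**}$ is exactly the l.s.c.\ convex envelope of $h$. It therefore suffices to establish $\beta_\psi(x) + g(x) = h^{**}(x)$ for all $x \in \Cc$, which is the target of the proof.

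The core step is a direct computation starting from the one-dimensional form of Definition~\ref{def:Breg_L0}. Substituting $\psi = g$ into $d_\psi(x,z) = g(x) - g(z) - g'(z)(x-z)$, the constraint $\alpha - d_\psi(\cdot,z) \le \lambda_0|\cdot|_0$ becomes $\alpha + g(z) + g'(z)(t-z) \le h(t)$ for all $t$, i.e.\ it forces the affine map $t \mapsto \alpha + g(z) + g'(z)(t-z)$, whose slope is $g'(z)$, to lie below $h$. For fixed $z$ the largest admissible $\alpha$ is $\alpha^{\max}(z) = \inf_t\bigl[h(t) - g'(z)t\bigr] + g'(z)z - g(z) = -h^*(g'(z)) + g'(z)z - g(z)$. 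Plugging this back and adding $g(x)$ to $\beta_\psi(x)$, the terms $\pm g(z)$ and $\pm g'(z)z$ cancel, yielding
\begin{equation}\label{eq:plan_sup}
\beta_\psi(x) + g(x) = \sup_{z \in \operatorname{int}(\Cc)} \bigl[g'(z)\,x - h^*(g'(z))\bigr].
\end{equation}

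It then remains to show that the supremum in~\eqref{eq:plan_sup}, restricted to slopes of the form $s = g'(z)$, reproduces the full biconjugate $h^{**}(x) = \sup_{s}[sx - h^*(s)]$. Here I would use that $\psi = g$ is a Legendre function, so $g'$ is a bijection from $\operatorname{int}(\operatorname{dom} g)$ onto $\operatorname{int}(\operatorname{dom} g^*)$, the latter being open by the coercivity hypothesis of Definition~\ref{def:Breg_L0} (cf.\ the ensuing Remark and \cite{Bauschke1997}). Since $h - g = \lambda_0|\cdot|_0$ takes values in $[0,\lambda_0]$, the conjugates obey $g^* - \lambda_0 \le h^* \le g^*$, whence $\operatorname{dom}(h^*) = \operatorname{dom}(g^*) = \operatorname{range}(g')$. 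The change of variable $s = g'(z)$ thus turns~\eqref{eq:plan_sup} into $\sup_{s \in \operatorname{dom}(h^*)}[sx - h^*(s)] = h^{**}(x)$; moreover, the concavity and upper semicontinuity of $s \mapsto sx - h^*(s)$ ensure that restricting to the interior of $\operatorname{dom}(h^*)$ loses nothing, so no boundary slopes need to be recovered.

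The main obstacle is exactly this matching of slope ranges: the supremum defining $\beta_\psi$ only sees affine minorants whose slope is attained by $g'$, and the whole equality hinges on these exhausting the effective domain of $h^*$. This is where the Legendre/essential-smoothness structure of $\psi$ and the openness of $\operatorname{dom}(\psi^*)$ are indispensable; without them $g'$ could fail to be surjective onto $\operatorname{dom}(h^*)$, and $\beta_\psi + g$ would only be a (possibly strict) lower bound for the convex envelope rather than equal to it.
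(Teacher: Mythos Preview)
Your argument is correct and takes a genuinely different route from the paper's. The paper first appeals to the explicit formula~\eqref{eq:prop1_equa2} for $\beta_\psi$, substitutes $\psi=\gamma(f(\cdot;y)+\tfrac{\lambda_2}{2}|\cdot|^2)$, and observes that $\beta_\psi+g$ is affine on $[\alpha^-,\alpha^+]$ while coinciding with $\lambda_0|\cdot|_0+g$ outside; it then invokes a characterization of convex envelopes from~\cite[Theorem~4.1]{Carlsson2019} to conclude. Your proof bypasses both the explicit form of $\beta_\psi$ and the external lemma: you read off directly from Definition~\ref{def:Breg_L0} that, when $\psi=g$, the family $\{\alpha-d_\psi(\cdot,z)\}$ is exactly the family of affine minorants of $h$ whose slope is $g'(z)$, so $\beta_\psi+g$ is the supremum of $sx-h^*(s)$ over $s\in\operatorname{range}(g')$; Legendre theory and the openness of $\operatorname{dom}(\psi^*)$ (from the coercivity hypothesis in Definition~\ref{def:Breg_L0}) then identify this range with $\operatorname{dom}(h^*)$, yielding $h^{**}$.

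The paper's approach is more elementary once Proposition~\ref{prop:Breg_l0} and Carlsson's result are in hand; yours is more conceptual and self-contained, and it explains \emph{why} the choice $\psi=g$ is the right one---it is precisely what makes the parameterization $z\mapsto g'(z)$ sweep out all relevant slopes. Your identification of the ``matching of slope ranges'' as the only nontrivial step is exactly right, and your handling of it via $g^*-\lambda_0\le h^*\le g^*$ together with the Legendre bijection is clean.
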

\begin{proof}
    To prove this proposition, we will use the converse of the result in~\cite[Theorem 4.1]{Carlsson2019}, which states that if for an l.s.c function  $g$, its l.s.c convex envelope $g^{**}$ is coercive, then there exists a unit vector $\nu$ and $t_0 > 0$ such that the function $t\mapsto g^{**}(x + t\nu)$ is affine on $(-t_0, t_0)$ for all $x$ such that $g(x) \neq g^{**}(x)$. Let  $\psi(\cdot)=\gamma\left(f(\cdot;y)+\frac{\lambda_2}{2}x^2\right)$ for $\gamma>0$.
Then, from~\eqref{eq:prop1_equa2}, we get that
\begin{equation*}
\beta_\psi(x) + \gamma \left( f(x;y)+\frac{\lambda_2}{2}x^2\right)= \left\lbrace 
    \begin{array}{ll}
       \gamma \left(  f(0;y) +  \left(f'(\alpha^\pm;y)+\lambda_2 \alpha^\pm \right)x\right),  & \text{ if } x \in [\alpha^-,\alpha^+],
       \\
      \lambda_0 |x|_0 +\gamma \left( f(x;y)+\frac{\lambda_2}{2}x^2\right),   &  \text{ otherwise}.
    \end{array}
    \right.
\end{equation*}
The function is affine with respect to $x$ on $[\alpha^-,\alpha^+]$, hence all the conditions of \cite[Theorem 4.1]{Carlsson2019} are satisfied. Therefore, $\beta_{\psi}(\cdot)+\gamma\left( f(\cdot;y)+\frac{\lambda_2}{2}|\cdot|^2 \right)$ is the l.s.c convex envelope of  $\lambda_0  |\cdot|_0+\gamma\left( f(\cdot;y)+\frac{\lambda_2}{2}|\cdot|^2 \right)$. 
\end{proof}

Intuitively, this result comes from the fact that, for this specific choice of $\psi$, the concavity of $\beta_\psi$ on $(\alpha^-,0)$ and $(0,\alpha^+)$ exactly matches the convexity of $f(\cdot;y) + \frac{\lambda_2}{2}(\cdot)^2$ so as to make the sum linear. 



\subsection{Case $\A$ not diagonal: Discussion} \label{sec:Anotdiag_disc}

For an arbitrary matrix $\A$, 
concavity-condition \eqref{eq:general-exact-condition} is a condition under which exact relaxation properties are ensured for $J_\Psi$. In this case,  the choice of the family $\Psi$ of generating functions can be made according to different, not necessarily compatible, criteria.  On the one hand, it is advisable to select a family $\Psi$ such that the relaxed functional $J_\Psi$ removes as many local minimizers of $J_0$ as possible. From Proposition~\ref{Local_minimizers_of_J0_preserved_by_JPsi}, this would require to select $\Psi$  that leads, under the exact relaxation condition \eqref{eq:general-exact-condition}
,  to the largest interval $[\alpha_n^-,\alpha_n^+]$ and the smallest interval $[\ell_n^-,\ell_n^+]$.
On the other hand, a more practical criterion is that \BR{} should be computable, which can be difficult for general functions $\psi_n$ since, as shown in 
Proposition~\ref{prop:Breg_l0}, getting a closed form expression requires to solve the equation $d_{\psi_n}(0,z)=\lambda_0$ in order to get the values $\alpha^{\pm}_n$. Finally, although enjoying better properties than $J_0$, the relaxation $J_\Psi$ is still non-convex. As such, our capacity to avoid local minimizers is not only due to the landscape of $J_\Psi$ but also to the non-convex optimization algorithm we consider. There is no theoretical guarantee that a given algorithm will perform better  (i.e., reach a stationary point with a lower objective value)  in minimizing the exact relaxation that removes the largest amount of  local minimizers. Yet, in our experiments, we  observe the superiority of relaxations that remove a larger amount of local minimizer of $J_0$ (cf. Sections~\ref{sec:numeric_algo} and~\ref{sec:high_dim_ex}).

\section{Proximal gradient algorithm for minimizing $J_\Psi$}\label{sec:algo}

The link between the minimizers of the original and the relaxed problems (Theorem~\ref{th:exact_relax}) motivates us to address Problem~\eqref{eq:problem_setting} by minimizing $J_\Psi$, which possesses better optimization properties. Although non-convex, $J_\Psi$ is continuous, which makes it amenable to be minimized by means of standard non-convex optimization algorithms. For instance, $J_\Psi$ enjoys the structural properties needed to apply  Majorization-Minimization techniques, such as the Iterative Reweighted $\ell_1$ algorithm \cite{Ochs2015OnIR} and, for some choices of $\Psi$, Difference of Convex Functions (DC) algorithms, see, e.g., \cite{LeThi2014FeatureSI}.

In this section, we  describe how the proximal gradient algorithm can be deployed efficiently to minimize $J_\Psi$. For that, the computation of the proximal operator of the proposed \BR{}  penalty in correspondence with different families  $\Psi$ (see Definition~\ref{def:Breg_L0}) is required. In the following, we derive a general formula of the proximal operator of $B_\Psi$ and, 
in correspondence of some of the generating functions listed in Table~\ref{tab:generating-functions}, we then compute explicitly the corresponding proximal operators. 

For $k\geq 0$ we start recalling the proximal gradient iteration~\cite{Attouch2013ConvergenceOD} applied to minimize $J_\Psi$, which reads:
\begin{equation}\label{eq:prox-grad-algo}
\x^{k+1} \in \prox_{\indic_{\mathcal{C}^N}+\rho B_{\Psi}} \left(\x^k -\rho \left(\A^T \nabla F_\y(\A\x^k) + \lambda_2 \x^k\right)\right)
,
\end{equation}
where $\rho>0$ is a step-size and the (possibly multi-valued) proximal operator of $\indic_{\mathcal{C}^N}+\rho B_\Psi$ is defined by:
\begin{equation}\label{prox-def}
\prox_{\indic_{\mathcal{C}^N}+\rho B_\Psi}(\x)= \argmin_{\x \in \Cc^N} \left\lbrace B_\Psi(\z)+\frac{1}{2\rho} \|\z-\x\|^2 \right\rbrace    
.
\end{equation}

When $J_\Psi$ satisfies the Kurdyka-\L ojaseiwicz property, a constant step-size  $0 < \rho < 1/L$, with $L$  the Lipschitz constant of the gradient of $F_\y(\A\cdot) + \frac{\lambda_2}{2} \|\cdot\|_2^2$, ensures the convergence of the generated sequence $\left\lbrace \x^k \right\rbrace_{k \in \N}$ to a critical point of  $J_\Psi$~\cite[Theorem 5.1]{Attouch2013ConvergenceOD}.  While this property should be checked for each specific instance of Problem~\eqref{eq:problem_setting}, it is known to be verified by a rich class of functions~\cite{attouch2010proximal}. As an alternative, in some of our numerical experiments  (see Section \ref{sec:high_dim_ex}) we resorted to a backtracking strategy to estimate the step-size at each iteration and improve convergence speed. 


Recalling $\Cc \in \{ \R, \R_{\geq 0} \}$, since $\indic_{\mathcal{C}^N}$   and $B_\psi$ are both separable 
 and $B_\psi$ is a non-negative symmetric function, there holds $\prox_{\indic_{\mathcal{C}}+\rho \beta_{\psi_n}}=\operatorname{proj}_{\indic_{\mathcal{C}}}\circ ~\prox_{\rho \beta_{\psi_n}}$. As such,  the computation of \eqref{prox-def} can be addressed by solving the following 1D problem 
\begin{equation}\label{prox-def-1D}
\prox_{\rho \beta_{\psi_n}}(x)= \argmin_{z \in \R} \left\lbrace \beta_{\psi_n}(z)+\frac{1}{2\rho} \|z-x\|^2 \right\rbrace
,
\end{equation}
since,  for all $\x \in \R^N$, there holds
 \begin{equation}
        \prox_{\indic_{\mathcal{C}^N}+\rho B_{\Psi}}(\mathbf{x}) =\left(\operatorname{proj}_{\indic_{\mathcal{C}}}~\circ ~\prox_{\rho\beta_{\psi_1}}(x_1),\dots, \operatorname{proj}_{\indic_{\mathcal{C}}}~\circ ~\prox_{\rho\beta_{\psi_N}}(x_N) \right)
      .
 \end{equation}
 The following proposition provides a general formula of the proximal operator of \BR{}.
\begin{proposition}[Proximal operator]\label{prop:prox_formula}
   Let $\rho>0$ and $n \in [N]$. For $x \in \R$, the proximal operator of $\beta_{\psi_n}$ is given by 
   \begin{equation}\label{eq:prox-formula-general}
\prox_{\rho \beta_{\psi_n}}(x)= \argmin_{u \in \mathcal{U}(x)} \left\lbrace \beta_{\psi_n}(u)+\frac{1}{2\rho} \|u-x\|^2 \right\rbrace
,
\end{equation}
 where $\mathcal{U}(x)=\left\lbrace 0,x\right\rbrace \cup S_x$ with $S_x=\{u \in \R : u-\rho\psi_n'(u)=x-\rho\psi'_n(\alpha_n^\pm)\}$ and $\alpha^\pm_n$ are defined in Proposition~\ref{prop:Breg_l0}.
\end{proposition}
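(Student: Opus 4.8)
The plan is to reduce the one-dimensional minimization defining $\prox_{\rho\beta_{\psi_n}}(x)$ to a comparison over a finite candidate set obtained from first-order optimality. Write $h(u) := \beta_{\psi_n}(u) + \frac{1}{2\rho}(u-x)^2$. By Proposition~\ref{prop:Breg_l0}, $\beta_{\psi_n}$ is continuous, nonnegative, and equal to the constant $\lambda_0$ outside the compact interval $[\alpha_n^-,\alpha_n^+]$; hence $h$ is continuous and coercive and attains its infimum over $\R$. First I would locate all points at which such a minimizer can occur.

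The key structural step is to show that $\beta_{\psi_n}$ is continuously differentiable on $\R \setminus \{0\}$, the only non-smooth point being the kink at the origin. Away from $\{\alpha_n^-,0,\alpha_n^+\}$ this is immediate from the three-piece formula~\eqref{eq:prop1_equa2} and the twice-differentiability of $\psi_n$. At the transition points $\alpha_n^\pm$ I would match the one-sided derivatives: from the constant piece the derivative is $0$, while from the inner piece $\beta_{\psi_n}'(u) = -\psi_n'(u) + \psi_n'(\alpha_n^\pm) \to -\psi_n'(\alpha_n^\pm) + \psi_n'(\alpha_n^\pm) = 0$ as $u \to \alpha_n^\pm$. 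Thus $\beta_{\psi_n}$ is differentiable at $\alpha_n^\pm$ with vanishing derivative, which reflects precisely the defining property $d_{\psi_n}(0,\alpha_n^\pm)=\lambda_0$ of the boundaries of the $\lambda_0$-sublevel set. At $u=0$ the left and right derivatives are $\ell_n^-$ and $\ell_n^+$, giving the genuine kink already recorded in~\eqref{eq:CP_beta}.

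Given this regularity, any global minimizer $u^\star$ of $h$ is either the non-differentiable point $u^\star=0$, or a stationary point $h'(u^\star)=0$ (Fermat's rule, since a global minimizer is a local one). I would then enumerate the stationary points piece by piece. On the two outer pieces $\beta_{\psi_n}'\equiv 0$, so $h'(u)=\tfrac{1}{\rho}(u-x)=0$ forces $u=x$; moreover at $u=\alpha_n^\pm$ one has $h'(\alpha_n^\pm)=\tfrac{1}{\rho}(\alpha_n^\pm-x)$, which vanishes only when $\alpha_n^\pm=x$, so the transition points enter the candidate set only through $u=x$. On each inner piece $h'(u) = -\psi_n'(u) + \psi_n'(\alpha_n^\pm) + \tfrac{1}{\rho}(u-x)$, so $h'(u)=0$ reads exactly $u - \rho\psi_n'(u) = x - \rho\psi_n'(\alpha_n^\pm)$, i.e.\ $u \in S_x$. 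Collecting cases yields $u^\star \in \{0,x\}\cup S_x = \mathcal{U}(x)$. Since conversely every element of $\mathcal{U}(x)$ is feasible, we get $\min_{u\in\mathcal{U}(x)} h = \min_{u\in\R} h$ and the two argmin sets coincide, which is the claim. For $\Cc=\R_{\geq 0}$ one has $\alpha_n^-=0$, the left inner piece is absent, and the unconstrained prox is composed with $\operatorname{proj}_{\indic_{\Cc}}$ as recalled before the statement; the same argument applies verbatim to the remaining pieces.

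The step I expect to be the crux is the differentiability matching at $\alpha_n^\pm$: it is exactly the sublevel-set identity $d_{\psi_n}(0,\alpha_n^\pm)=\lambda_0$ that makes the one-sided derivatives agree and keeps $\alpha_n^\pm$ out of $\mathcal{U}(x)$ except through $u=x$. The residual non-convexity of $\beta_{\psi_n}$ on the inner intervals is harmless here, because I do not solve a single optimality equation but merely compare $h$ over the finite set $\mathcal{U}(x)$, which is guaranteed to contain the global minimizer.
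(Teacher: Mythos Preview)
Your proposal is correct and follows essentially the same route as the paper: reduce the global minimization to first-order optimality, enumerate the stationary points on each piece, and compare over the resulting finite candidate set $\mathcal{U}(x)=\{0,x\}\cup S_x$. The paper's own proof is more compressed---it simply invokes the pre-computed Clarke subdifferential formula~\eqref{eq:CP_beta} rather than spelling out the $C^1$ matching at $\alpha_n^\pm$ as you do---but the underlying argument is identical.
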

 \begin{proof}
     The proof is given in Appendix~\ref{appendix:bregman_prox}.
 \end{proof}

 Below, we compute the set $S_x$ defined in Proposition~\ref{prop:prox_formula} for the specific choices of  generating functions given in Table~\ref{tab:generating-functions}. The details of computations can be found in Appendix~\ref{appendix:expl-prox}.
\begin{example}[Power functions]\label{expl:prox-power-func}
Let $n \in [N]$. If $\psi_n$ is defined as a $p$-power function with $p \in (1,2]$, then $S_x$ is the set containing the solutions $u\in\mathbb{R}$ to
\begin{equation}
u-\frac{\rho\gamma_n}{p-1}\operatorname{sign}(u) |u|^{p-1}=x-\rho\psi_n'(\alpha^\pm_n),
\end{equation}
where taking $\alpha_n^-$ or $\alpha_n^+$ depends  on the sign of $x$. In particular, we have the following:
\begin{enumerate}[label=\roman*)]
\item  If $p=2$: 
$$
    S_x=\left\lbrace\frac{x-\rho\psi_n'(\alpha^\pm_n)}{1-\rho\gamma_n}\right\rbrace
$$
\item If $p=3/2$:  $S_x=\left\lbrace x_1, x_2 \right\rbrace \cap \R$, with 
\begin{align*}
    & x_1=x-\rho\psi_n'(\alpha_n^\pm)\pm 2(\rho\gamma_n)^2+ 2\rho\gamma_n  \sqrt{(\rho\gamma_n)^2+x-\rho\psi_n'(\alpha_n^\pm)} \\
    & x_2=x-\rho\psi_n'(\alpha_n^\pm)\pm 2(\rho\gamma_n)^2-2\rho\gamma_n  \sqrt{(\rho\gamma_n)^2+x-\rho\psi_n'(\alpha_n^\pm)}
\end{align*}
\item If $p=4/3$: $S_x=\left\lbrace \pm x_1^3,\pm x_2^3 , \pm x_2^3 \right\rbrace \cap \R $, where 
$$x_1=A+B, \quad x_2=\omega A+ \omega^2 B \quad \text{ and } \quad x_3=\omega^2 A+ \omega B$$ with $ 
A=\sqrt[3]{\frac{\pm(x-\rho\psi_n'(\alpha_n^\pm))}{2}+\frac{1}{2}\sqrt{\Delta}}
$, $ 
B=\sqrt[3]{\frac{\pm(x-\rho\psi_n'(\alpha_n^\pm))}{2}-\frac{1}{2}\sqrt{\Delta}}$, $\omega=-\frac{1}{2}+i\frac{\sqrt{3}}{2}$
and  $\Delta=(x-\rho\psi_n'(\alpha_n^+))^2-4 (\rho\gamma)^3$. Taking $-x_i^3$ or $x_i^3$ for $i \in \{1,2,3\}$ (similarly $-(x-\rho\psi_n'(\alpha_n^-))$ or $(x-\rho\psi_n'(\alpha_n^+)$) depends on the  sign as $x$.
\end{enumerate}
\end{example}
\begin{example}[Shannon entropy]\label{expl:prox-shannon}
    Let $n \in [N]$. If $\psi_n$ is defined as the Shannon entropy, we have that
$$
    S_x=\left\lbrace -\rho\gamma_n W_{-1}\left(-\frac{1}{\rho \gamma_n}   e^{-\frac{x-\rho\psi_n'(\alpha_n^+)}{\rho \gamma_n}}\right), -\rho\gamma_n W_0\left(-\frac{1}{\rho \gamma_n}   e^{-\frac{x-\rho\psi_n'(\alpha_n^+)}{\rho \gamma_n}}\right) \right\rbrace \cap \R_{\geq 0}
    .
$$
where $W_0(\cdot)$ and  $W_{-1}(\cdot)$ are the principal and the negative branches of the Lambert function, respectively.
\end{example}

\begin{example}[KL]\label{expl:prox_kl} Let $n \in [N]$. If $\psi_n$ is defined as the Kullback-Leibler divergence with $b>0$ and $y>0$, we have that 
$$
   S_x=\left\lbrace \frac{1}{2}\left(x+\rho\gamma_n-\rho\psi_n'(\alpha_n^+)-b \pm \sqrt{\Delta(x)}\right) \right\rbrace \cap \R_{\geq 0}
   ,
$$
where $\Delta(x)=(x+\rho\gamma_n-\rho\psi_n'(\alpha_n^+)-b)^2+4(bx+\rho\gamma_n b-b\rho\psi_n'(\alpha_n^+)-y\rho\gamma_n)$.
\end{example}

In Proposition~\ref{prop:prox_formula}, the set $\mathcal{U}(x)$ contains candidate solutions for the prox problem. The global one can be easily computed by comparing the associated objective values. This would lead to (at most) two thresholds corresponding to the different distinctive areas of the prox, namely, $\mathrm{prox}(x) = 0$ near $0$, $\mathrm{prox}(x) = x$ for large $x$, and a value of $\mathcal{S}_x$ for intermediate values of $x$. In Corollary~\ref{coro:prox-cases} we highlight two regimes, depending on $\rho$ for which these thresholds can be made more explicit. 
\begin{coro}\label{coro:prox-cases}
Let $\rho>0$, $n \in [N]$ and $x\in\R$. We distinguish the following cases: 
\begin{enumerate}[label=\roman*)] 
\item If $\underaccent{\bar}{\theta}=\inf_{{t \in (\alpha_n^-,\alpha_n^+)\backslash\{0\}}} \psi_n''(t) > \frac{1}{\rho}$, then the proximal operator of $\rho \beta_{\psi_n}$  is given by
\begin{equation}\label{eq:prox-l0}
   \prox_{\rho\beta_{\psi_n}}(x)=\prox_{\lambda_0 |\cdot|_0}(x)= x \mathbb{1}_{\{|x|>\sqrt{2\rho\lambda_0}\}}+\{0,x\}\mathbb{1}_{\{|x|=\sqrt{2\rho\lambda_0}\}}
   .
\end{equation}
\item If $\bar{\theta}=\sup_{t \in \Cc } \psi_n''(t)$ exists and $\bar\theta < \frac{1}{\rho}$, then the proximal operator of $\rho \beta_{\psi_n}$  is continuous and given by
\begin{equation}\label{eq:prox-continue}
\prox_{\rho\beta_{\psi_n}}(x)= \operatorname{sign}(x) \min \left(|x|, \left(  \left(\operatorname{id}-\rho\psi_n'\right)^{-1} \left(x-\rho \psi_n'(\alpha^+_n) \right)\right)_{+} \right)
.
\end{equation}
where $(x)_+ = \max(x,0)$, $\operatorname{id}$ stands for the identity map and the values $\alpha_n^{\pm}$ are defined as in Proposition \ref{prop:Breg_l0}.
\end{enumerate}
\end{coro}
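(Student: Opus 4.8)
The plan is to analyze the one-dimensional prox objective $G(z) := \beta_{\psi_n}(z) + \frac{1}{2\rho}(z-x)^2$ directly, exploiting the explicit piecewise structure of $\beta_{\psi_n}$ from Proposition~\ref{prop:Breg_l0}. On each smooth piece $(\alpha_n^-,0)$ and $(0,\alpha_n^+)$ one has $\beta_{\psi_n}'' = -\psi_n''$, hence $G'' = \frac{1}{\rho} - \psi_n''$, whose sign is exactly what the two hypotheses control. I would record at the outset three facts used throughout: $\beta_{\psi_n}$ is $C^1$ across $\alpha_n^\pm$ (both one-sided derivatives vanish there), it has a single kink at $0$ with $\partial\beta_{\psi_n}(0)=[\ell_n^-,\ell_n^+]$, and $\beta_{\psi_n}(z)=\lambda_0|z|_0$ precisely on the set $S:=\{0\}\cup(\Cc\setminus(\alpha_n^-,\alpha_n^+))$ while $\beta_{\psi_n}\le\lambda_0|\cdot|_0$ everywhere.

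For case (i), the condition $\underaccent{\bar}{\theta}>1/\rho$ gives $G''<0$ on the two inner intervals, so $G$ is strictly concave there and cannot attain a minimum in their interiors; thus every minimizer of $G$ lies in $S$. Introducing the hard-threshold objective $H(z):=\lambda_0|z|_0+\frac{1}{2\rho}(z-x)^2$, one has $G=H$ on $S$ and $G\le H$ globally, and combining $\min G=\min_S G=\min_S H\ge\min H\ge\min G$ forces $\min G=\min H$ together with the fact that $H$ attains its minimum on $S$. It then remains to show every hard-threshold minimizer lies in $S$, which is where the key estimate enters: from the integral identity $d_{\psi_n}(0,\alpha_n^+)=(\alpha_n^+)^2\int_0^1 s\,\psi_n''(s\alpha_n^+)\,ds$, the bound $\psi_n''>1/\rho$ on the interval, and $d_{\psi_n}(0,\alpha_n^+)=\lambda_0$, I would deduce the strict inclusion $(\alpha_n^-,\alpha_n^+)\subset(-\sqrt{2\rho\lambda_0},\sqrt{2\rho\lambda_0})$. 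Since hard thresholding returns $0$, $x$, or both according to $|x|<,>,=\sqrt{2\rho\lambda_0}$, this inclusion guarantees those minimizers avoid $(\alpha_n^-,\alpha_n^+)$ and hence lie in $S$, yielding the full set equality $\prox_{\rho\beta_{\psi_n}}(x)=\prox_{\lambda_0|\cdot|_0}(x)$, including the two-valued boundary case.

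For case (ii), $\bar\theta<1/\rho$ makes $G''>0$ on each smooth piece; combined with the $C^1$-pasting at $\alpha_n^\pm$ and the upward jump $\ell_n^-<\ell_n^+$ of the slope at $0$, the whole function $G$ is convex on $\Cc$, hence has a unique minimizer and a single-valued, continuous prox. I would then locate the minimizer through $0\in\partial G$: it is $z=0$ when $x\in[\rho\ell_n^-,\rho\ell_n^+]$; on $(0,\alpha_n^+)$ the equation $G'(z)=0$ rearranges to $(\operatorname{id}-\rho\psi_n')(z)=x-\rho\psi_n'(\alpha_n^+)$, uniquely solvable because $\operatorname{id}-\rho\psi_n'$ has derivative $1-\rho\psi_n''>0$; and for $|x|\ge\alpha_n^+$ the minimizer sits in the outer region where $\beta_{\psi_n}'=0$, giving $z=x$. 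Writing $T(x):=(\operatorname{id}-\rho\psi_n')^{-1}(x-\rho\psi_n'(\alpha_n^+))$, the identities $T(\rho\ell_n^+)=0$ and $T(\alpha_n^+)=\alpha_n^+$ together with $T'>1$ show $T(x)<x$ on the middle regime and $T(x)>x$ beyond $\alpha_n^+$, so the three regimes are packaged exactly as $\operatorname{sign}(x)\min(|x|,(T(x))_+)$; the case $x<0$ follows from the analogous computation on $(\alpha_n^-,0)$, which for the symmetric illustrative $\psi_n$ reduces to reflecting the positive branch, and is vacuous on the negative side when $\Cc=\R_{\ge0}$ (where $\alpha_n^-=0$).

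The main obstacle is the case (i) estimate $(\alpha_n^-,\alpha_n^+)\subset(-\sqrt{2\rho\lambda_0},\sqrt{2\rho\lambda_0})$: it is precisely what converts the local condition $\psi_n''>1/\rho$ into the global statement that the prox collapses to hard thresholding, and it is needed to control the boundary value $|x|=\sqrt{2\rho\lambda_0}$ so that set (rather than merely value) equality holds. The remaining ingredients — strict concavity/convexity on the pieces, convexity across the $C^1$ junctions and the kink, and the $\min/(\cdot)_+$ bookkeeping — are routine once this inclusion and the candidate set of Proposition~\ref{prop:prox_formula} are in hand.
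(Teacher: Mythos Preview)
Your proof is correct and follows the same strategy as the paper: in case~(i) you use strict concavity of $G$ on the inner intervals to force minimizers into $S=\{0\}\cup(\Cc\setminus(\alpha_n^-,\alpha_n^+))$, and in case~(ii) you use global convexity of $G$ to get a unique critical point. Your execution is in fact more complete than the paper's: the paper simply writes ``$g(x)=\lambda_0$'' and compares with $g(0)=x^2/(2\rho)$, which is only literally true when $x\notin(\alpha_n^-,\alpha_n^+)$, whereas your sandwich $\min G=\min_S G=\min_S H\ge\min H\ge\min G$ together with the integral estimate $\lambda_0=d_{\psi_n}(0,\alpha_n^+)>(\alpha_n^+)^2/(2\rho)$ cleanly delivers the set equality $\mathrm{prox}_{\rho\beta_{\psi_n}}(x)=\mathrm{prox}_{\lambda_0|\cdot|_0}(x)$, including the two-valued boundary case, without that shortcut.
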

\begin{proof}
  Let $n \in [N]$, $\rho>0$ and $x \in \R$. Let $g : \Cc \to \R$ be defined by $g(u):=\beta_{\psi_n}(u)+\frac{1}{2\rho}(u-x)^2$.
  \begin{itemize}
      \item Proof of i).  From Proposition~\ref{prop:Breg_l0}, we can easily see that for $\underaccent{\bar}{\theta} > \frac{1}{\rho}$, the functional $g$ is concave on the interval $ (\alpha_n^-,\alpha_n^+)\backslash\{0\}$, hence the set $\mathcal{U}(x)$ defined in Proposition~\ref{prop:prox_formula} is reduced to $\{0,x\}$ with $g(x)=\lambda_0$ and $g(0)=\frac{1}{2\rho}x^2$. We deduce the formula~\eqref{eq:prox-l0} through the comparison of these objective values.
       \item Proof of ii). If $\bar\theta < \frac{1}{\rho}$, the functional $g$ is globally convex. Therefore by the definition of the proximal operator, $\prox_{\beta_{\psi_n}}(\cdot)$ is continuous, and $S_x$ defined in Proposition~\ref{prop:prox_formula} is reduced to the singleton $u^*$ given by $u^*=\left(\operatorname{id}-\rho\psi_n'\right)^{-1}\left(x-\rho\psi'_n(\alpha_n^\pm)\} \right)$ with $\mathcal{U}(x)=\{0,x,u^*\}$, which yields~\eqref{eq:prox-continue}.
  \end{itemize}
\end{proof}
In Corollary~\ref{coro:prox-cases}, we observe that if $\underaccent{\bar}{\theta}> \frac{1}{\rho}$, the proximal operator of \BR{} is discontinuous and equals the proximal operator of the $\ell_0$ pseudo-norm, i.e.~the hard-thresholding operator. Conversely, when the second derivative of the  generating function $\psi_n$ attains a supremum  $\bar\theta$, by selecting $\rho$ such that $\bar\theta < \frac{1}{\rho}$ results in a continuous proximal operator with three pieces.
Finally, choosing  $\underaccent{\bar}{\theta}< \frac{1}{\rho} < \bar\theta$ (or, simply, $\underaccent{\bar}{\theta}< \frac{1}{\rho}$ in cases where $\bar\theta$ does not exist) leads to a proximal operator which is discontinuous at the transition from $0$ to the non-zero values of the prox. 
We illustrate all these cases in Figure~\ref{fig:plot-prox-functions}, where we plot the proximal operator of $\beta_{\psi_n}$ generated by power functions and the KL divergence. In Figures~\ref{subfig:prox-4} and~\ref{subfig:prox-1.5}, we observe that the proximal operator transitions discontinuously from $0$ to a non-zero value before becoming linear. This behavior is associated with the case $\underaccent{\bar}{\theta}< \frac{1}{\rho}$. Note that achieving continuity is not possible here since the supremum of the second derivative is not attained for power functions with $1<p< 2$. For $p=2$ the second derivative is constant ($\underaccent{\bar}{\theta}=\bar\theta$), hence either the first or second case of Corollary~\ref{coro:prox-cases} stands. We depict the second case in Figure~\ref{subfig:prox-2}.
We illustrate all three possible cases using the KL generating function in Figures~\ref{subfig:prox-kl-c1}, \ref{subfig:prox-kl-c2}, and \ref{subfig:prox-kl-c3}.

\begin{figure}
    \centering
    \begin{subfigure}{0.3\textwidth}
        \centering
        \includegraphics[width=\linewidth]{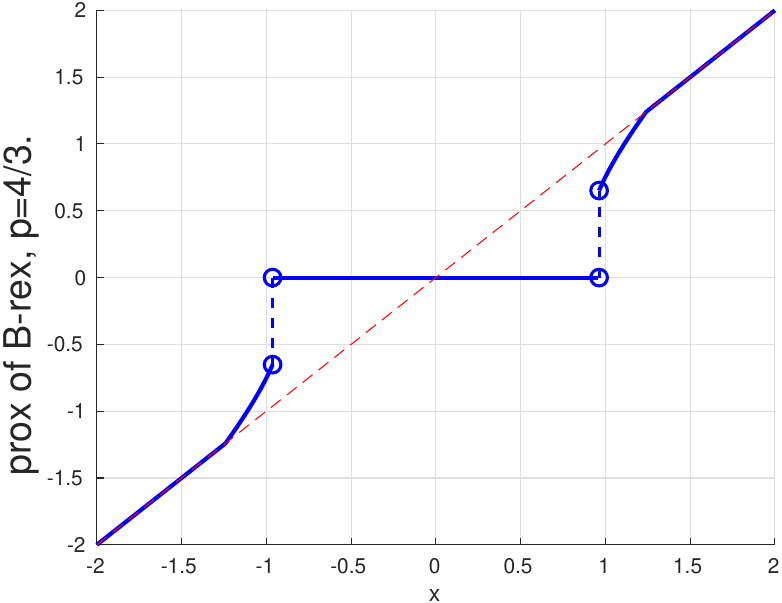}
       \caption{\footnotesize $\frac{4}{3}$-power function, $\underaccent{\bar}{\theta}< \frac{1}{\rho}$.}
        \label{subfig:prox-4}
    \end{subfigure}
       \hfill
    \begin{subfigure}{0.3\textwidth}
        \centering
        \includegraphics[width=\linewidth]{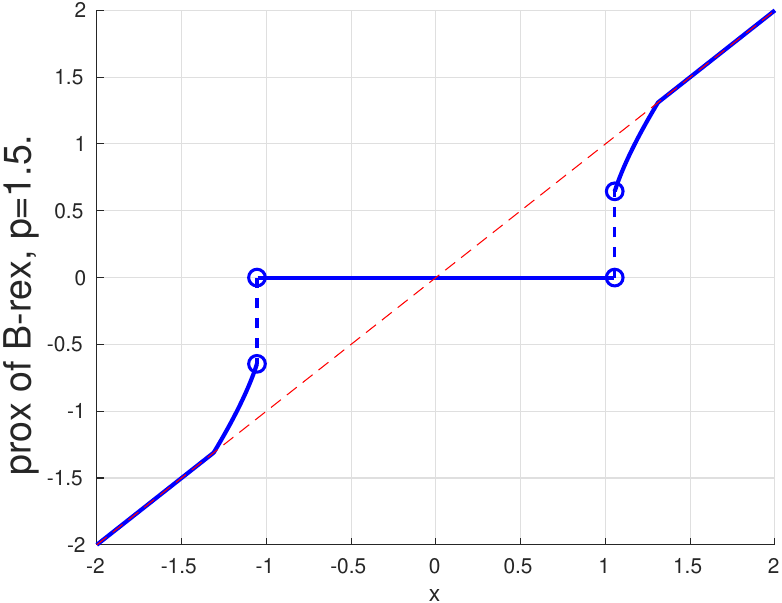}
        \caption{\footnotesize $\frac{3}{2}$-power function, $\underaccent{\bar}{\theta}< \frac{1}{\rho}$.}
       \label{subfig:prox-1.5}
    \end{subfigure}
    \hfill
    \begin{subfigure}{0.3\textwidth}
        \centering
        \includegraphics[width=\linewidth]{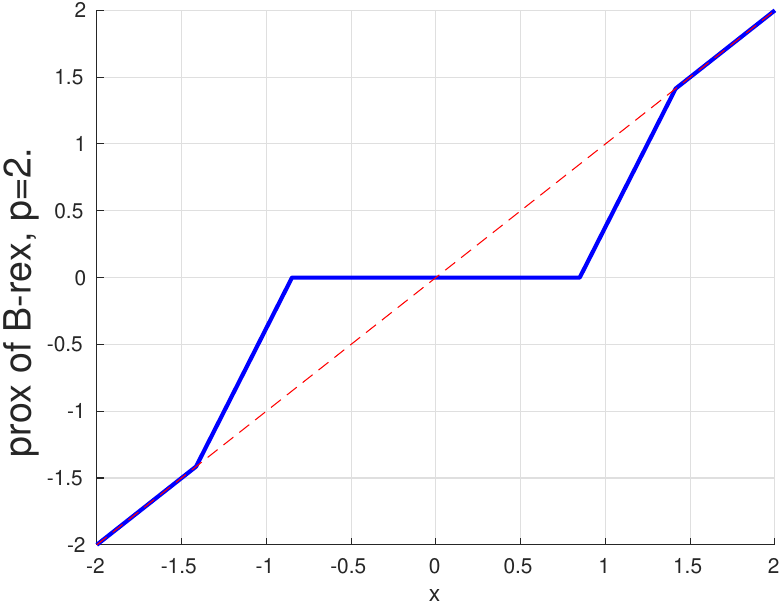}
        \caption{\footnotesize $2$-power function, $\bar\theta < \frac{1}{\rho} $.}
        \label{subfig:prox-2}
    \end{subfigure}
    
       \begin{subfigure}{0.3\textwidth}
        \centering
        \includegraphics[width=\linewidth]{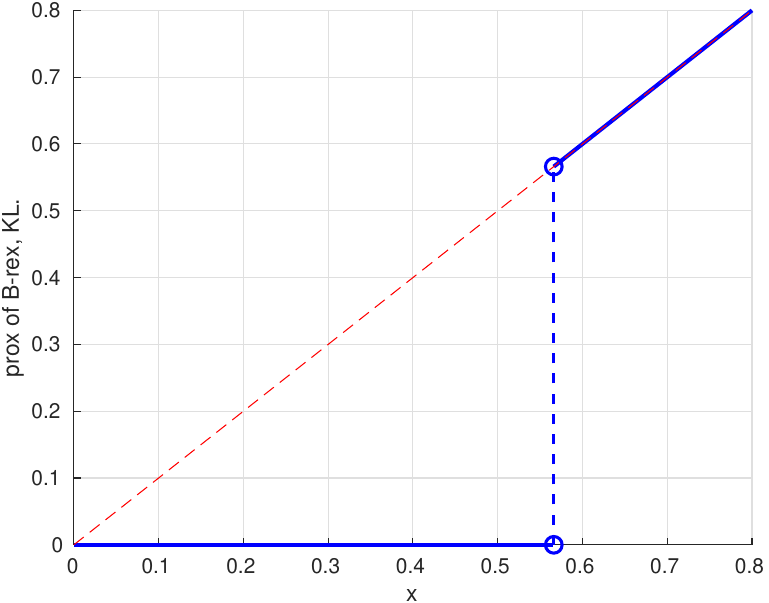}
       \caption{\footnotesize KL, $\underaccent{\bar}{\theta} > \frac{1}{\rho}$.}
        \label{subfig:prox-kl-c1}
    \end{subfigure}
       \hfill
    \begin{subfigure}{0.3\textwidth}
        \centering
        \includegraphics[width=\linewidth]{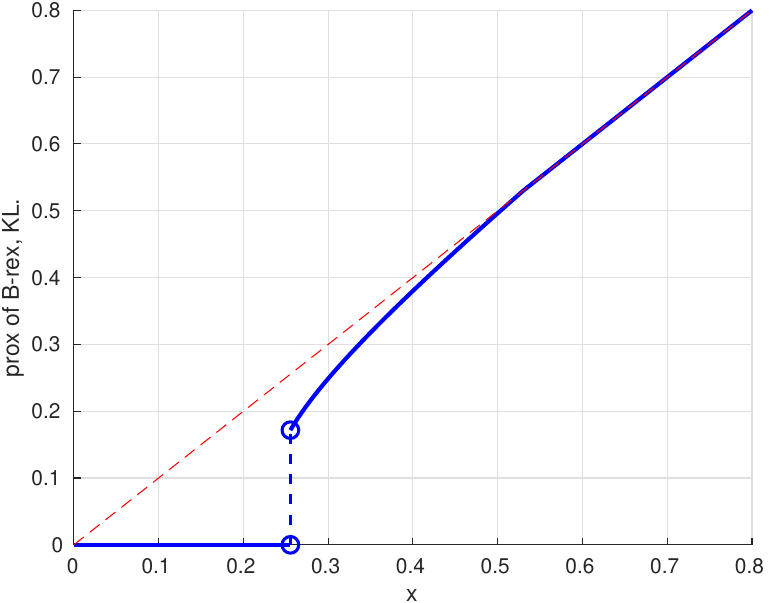}
       \caption{\footnotesize KL, $\underaccent{\bar}{\theta} < \frac{1}{\rho} < \bar\theta$.}
       \label{subfig:prox-kl-c2}
    \end{subfigure}
    \hfill
    \begin{subfigure}{0.3\textwidth}
        \centering
        \includegraphics[width=\linewidth]{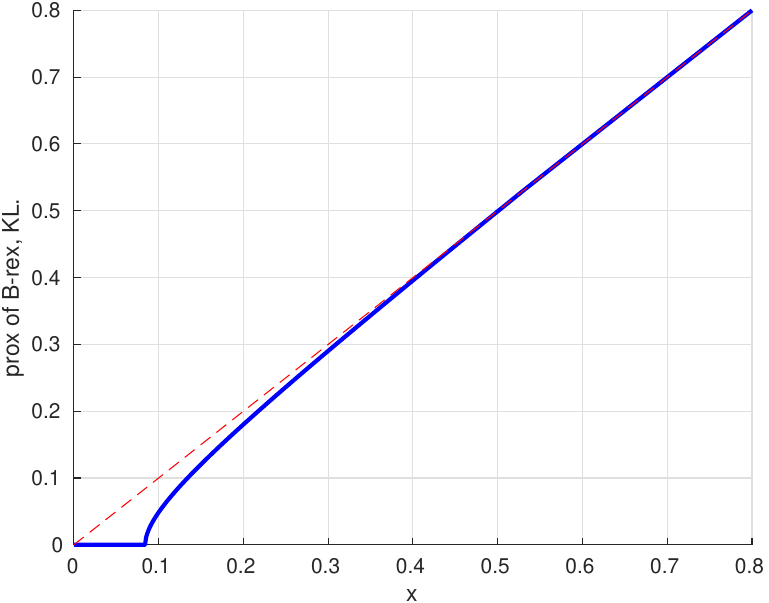}
        \caption{\footnotesize KL, $\bar\theta < \frac{1}{\rho}$.}
       \label{subfig:prox-kl-c3}
    \end{subfigure}
    \caption{Proximal operator of $\beta_{\rho\psi}$ with $\psi$ being a $p$-power function with $p \in \{4/3,3/2,2\}$ and the KL divergence.}
    \label{fig:plot-prox-functions}
\end{figure}

\section{Numerical Illustrations}\label{sec:application}

In this section, we evaluate the performance of the proposed relaxations on a diverse range of applications
where the goal is to estimate a sparse signal $\x\in\Cc^N$ from noisy linear measurements $\y \approx \A\x$. We start our discussion with a standard validation framework where the noise is Gaussian and a classical least-squares fidelity $F_\y$ is used. This model is frequently encountered in applications in compressive sensing~\cite{Bruckstein2009FromSS} and inverse problems in signal/image processing~\cite{10.5555/1895005,Soussen2011FromBD}.

We then focus on non-quadratic data terms and consider a logistic regression data term which, combined with a sparsity-promoting regularizer, aims to predict a sparse model for binary classification \cite{Lee2006EfficientLR}, with numerous applications in document classification \cite{Brzezinski99} and computer vision \cite{Friedman08}. Finally, we consider the Kullback-Leibler divergence data term which is often employed in the framework of inverse problems in biological and astronomical imaging to  describe the presence of Poisson noise, see, e.g.~\cite{Lingenfelter2009,Harmany2012}. The chosen data terms (see Table~\ref{tab:data_fidelty}) have many real-world applications. 

For the following numerical tests, we define the generating functions $\Psi=\left\{ \psi_n\right\}$ as  $\psi_n=\gamma_n \psi$ for $n \in [N]$, where $\psi$ does not depend on $n$. As such, the parameters $\gamma_n>0$ tune the curvature of the \BR{} penalty in order to satisfy the conditions of Theorem~\ref{th:exact_relax} (more precisely here~\eqref{eq:sup_inf_cond}). We consider the generating functions listed in Table~\ref{tab:B-rex} and for which the required quantities are provided in Table~\ref{tab:exact-conditions}. Then, by simple inversion to isolate the parameters $\gamma_n$, we obtain the conditions for exact relaxation reported in Table~\ref{tab:cond-gamma}.  
It suffices to ensure that $\gamma_n$ is greater than the specific bound denoted by $\hat{\gamma}_n$ in Table~\ref{tab:cond-gamma}, which depends on the data, to guarantee the validity of \eqref{eq:sup_inf_cond}. We  then define the vector $\bm{\gamma}_\mathrm{thr} = \left( \hat{\gamma}_n \right)_{n \in [N]}$.
\medskip

In the following experiments we illustrate the theoretical properties of \BR{} showed so far. 
We will focus, in particular, on the potential of the \BR{} relaxations to eliminate local minimizers of the initial Problem \eqref{eq:problem_setting} which makes the use of  off-the-shelf algorithms (e.g., proximal gradient) both possible and more effective. The development of tailored optimization methods exploiting explicitly the known properties of \BR{} (see, e.g., Proposition~\ref{Local_minimizers_of_J0_preserved_by_JPsi}), as well as exhaustive numerical comparisons with other approaches addressing \eqref{eq:problem_setting} will be addressed in future work.


\begin{table}
    \centering
    \begin{tabular}{|c|c|c|}
    \hline 
     \multicolumn{2}{|c|}{\cellcolor[HTML]{EFEFEF}Problem} & \multicolumn{1}{c|}{\cellcolor[HTML]{EFEFEF}Bregman generating function $\psi$} \\ 
    \hline
    \cellcolor[HTML]{FFFFC7} $F_\y$&\cellcolor[HTML]{FFFFC7} $\lambda_2$ &\cellcolor[HTML]{FFFFC7} $p$-power function, $\; p \in (1,2]$\\ \hline
    LS& $0$ &$\gamma_n > \left( p \lambda_0 \right)^{\frac{2-p}{2}} \|\mathbf{a}_n\|_2^p:=\hat{\gamma}_n$ \\ \hline
     LR & $\lambda_2 > 0$ & $\gamma_n > \left( p \lambda_0 \right)^{\frac{2-p}{2}} \left( \frac{1}{4} \|\mathbf{a}_n\|_2^2+\lambda_2 \right)^{\frac{p}{2}}:=\hat{\gamma}_n $\\ \hline
     KL& $0$ & $\gamma_n >  (p\lambda_0)^{\frac{2-p}{2}} \left(\frac{1}{b^2} \sum_{m=1}^M a_{mn}^2 y_m \right)^{\frac{p}{2}}:=\hat{\gamma}_n$ \\ \hline
      \cellcolor[HTML]{FFFFC7}&\cellcolor[HTML]{FFFFC7} &\cellcolor[HTML]{FFFFC7}
    Shannon entropy \\ \hline
    KL&$0$ & $\gamma_n >  \left(\frac{\lambda_0}{b^2} \sum_{m=1}^M a_{mn}^2 y_m \right)^{\frac{1}{2}}:=\hat{\gamma}_n$ \\ \hline
      \cellcolor[HTML]{FFFFC7}&\cellcolor[HTML]{FFFFC7} & \cellcolor[HTML]{FFFFC7}
    KL divergence \\ \hline
    KL &$0$& $\gamma_n W^2(-b\e^{-\kappa})>\sum_{m=1}^M a_{mn}^2 y_m$  \\ \hline
    \end{tabular}
    \caption{Conditions to have exact relaxation properties for different data terms $F_\y$ and Bregman generating functions $\psi$, see \eqref{eq:sup_inf_cond}. $W(\cdot)$ denotes the Lambert function and $\kappa=\frac{\lambda_0}{y\gamma_n}+log(b)+1$.}
    \label{tab:cond-gamma}
\end{table}

\subsection{One-dimensional examples}

\begin{figure}
\centering
\begin{subfigure}[t]{0.3\textwidth}
    \centering
    \includegraphics[width=\textwidth]{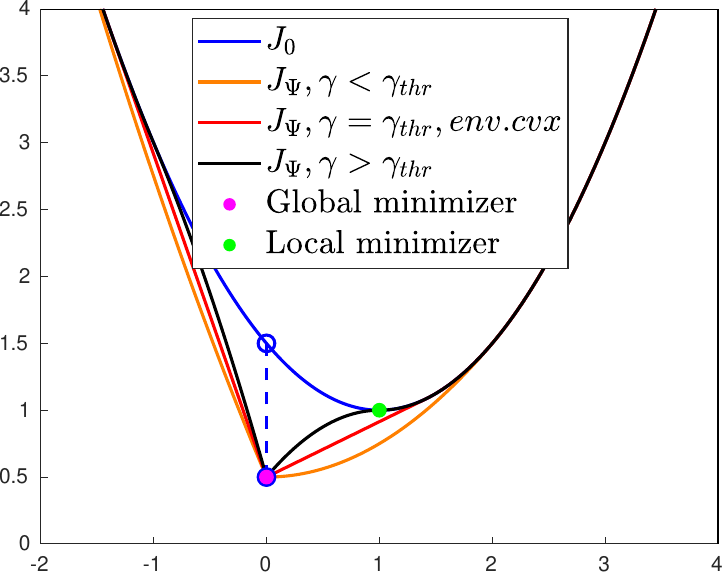}
    \caption{$F_\y$=LS, $\psi$ is the  2-power function, \change{$a=1$, $y=1$ and  $\lambda_2 =0$}.}
    \label{subfig:subfig_1D_LS}
\end{subfigure}
\hspace{1cm}
\begin{subfigure}[t]{0.3\textwidth}
    \centering
    \includegraphics[width=\textwidth]{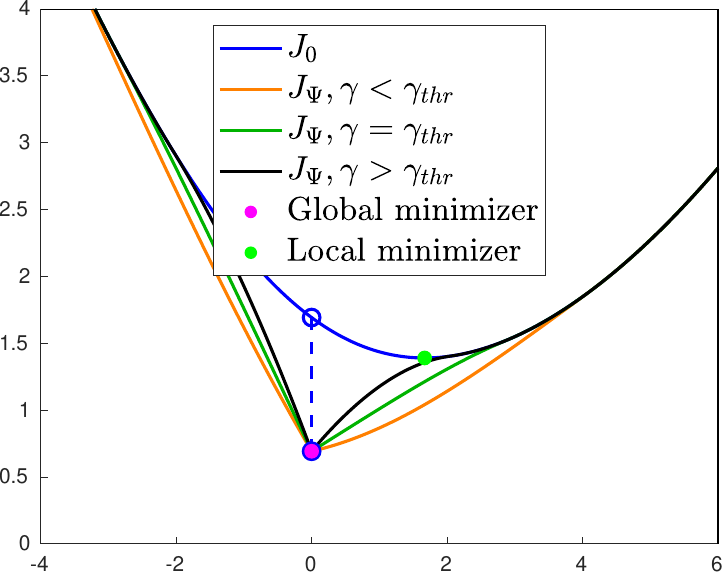}
    \caption{$F_\y$=LR, $\psi$ is the  2-power function, \change{$a=0.75$, $y=1$, and  $\lambda_2 =0.1$}.}
    \label{subfig:subfig_1D-LR}
\end{subfigure}

\begin{subfigure}[t]{0.3\textwidth}
    \centering
    \includegraphics[width=\textwidth]{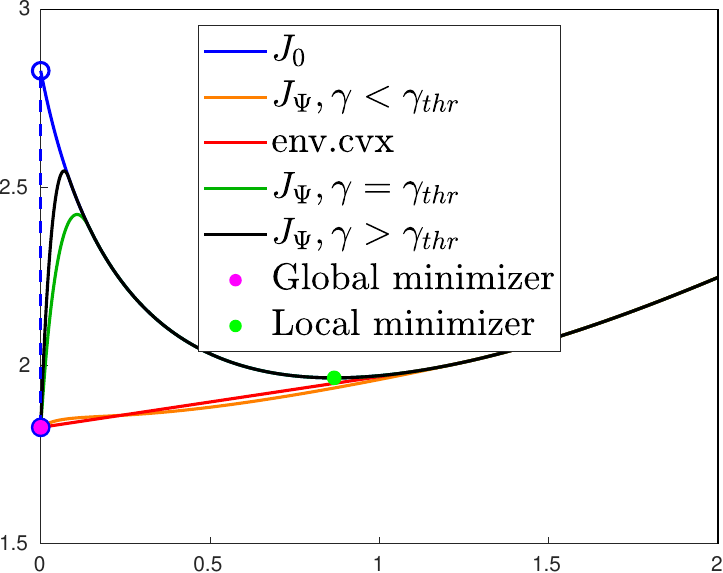}
    \caption{$F_\y=\mathrm{KL}$, $\psi$ is KL.}
    \label{subfig:subfig_1D_KL-kl}
\end{subfigure}
\hfill
\begin{subfigure}[t]{0.3\textwidth}
    \centering
    \includegraphics[width=\textwidth]{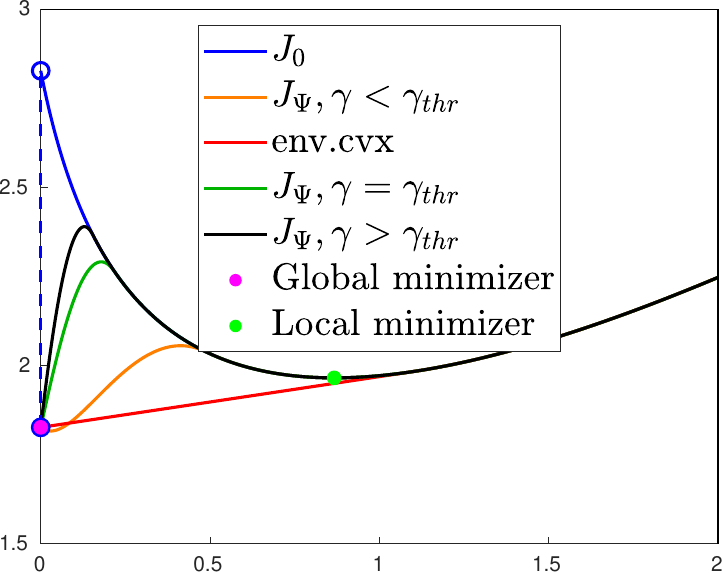}
    \caption{$F_\y=\mathrm{KL}$, $\psi$ is the  2-power function.}
    \label{subfig:subfig_1D_KL-Pow}
\end{subfigure}
\hfill
\begin{subfigure}[t]{0.3\textwidth}
    \centering
    \includegraphics[width=\textwidth]{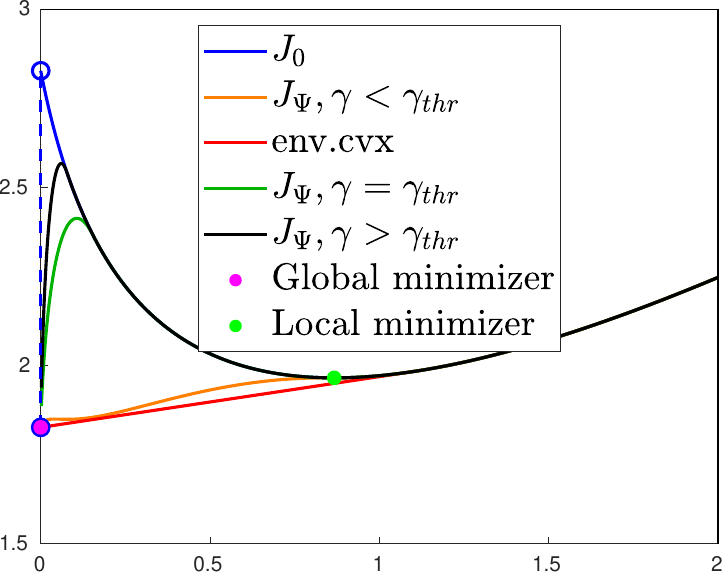}
    \caption{$F_\y=\mathrm{KL}$, $\psi$ is the Shannon-Entropy.}
    \label{subfig:subfig_1D_KL}
\end{subfigure}
 \caption{Relaxations $J_\Psi$ computed in terms of the condition on $\gamma$ as in Theorem~\ref{th:exact_relax} and Table~\ref{tab:cond-gamma} for different data-terms and generating functions. The red curves 
 correspond to the convex envelope of the original function $J_0$ obtained by choosing $\psi=f(a\cdot;y)$. \change{For KL, $(a, y, b, \lambda_2) = (0.75, 0.75, 0.1, 0)$}.}
 \label{fig:1D-data}
\end{figure}

Let $M=N=1$, $\lambda_0=1$, and consider the minimization problem defined by:
\begin{equation}\label{eq:1D-J-0}
J_0(x)=f(ax;y)+|x|_0+ \frac{\lambda_2}{2}x^2,
\end{equation}
where the function $f$ is chosen as specified in Table~\ref{tab:data_fidelty}. 
We aim to investigate the impact of replacing~\eqref{eq:1D-J-0} by
\begin{equation}\label{eq:1D-Jpsi}
J_\psi(x)=f(ax;y)+\beta_\psi(x)+\frac{\lambda_2}{2}x^2,  
\end{equation}
where $\beta_\psi$ is given in Table~\ref{tab:B-rex} for various generating functions. Proposition~\ref{prop2_section2} states that by selecting $\psi(x)=f(ax;y), a>0$, the relaxed function $J_\psi$ turns out to be the convex envelope of $J_0$. In Figure~\ref{fig:1D-data}, we present $J_0$ in the blue curve along with its two minimizers, the first (global) one is $\hat{x}=0$ (Lemma~\ref{lemma:0-is-strict-min}), and the second (local) one $\hat{x} \in \Cc$ solves $af'(a\hat{x};y)+\lambda_2\hat{x}=0$ (Proposition~\ref{prop:local_min_J0}), which gives $\hat{x}=\frac{y}{a}$ and $\hat{x}=\frac{y-b}{a}$ for LS and KL, respectively, while for LR the optimality condition has no explicit formula and requires the use of a root-finding method. The relaxations $J_\psi$ of $J_0$ are displayed using different colors.

We observe that the relaxation $J_\psi$ obtained by choosing $\psi(\cdot)=f(a\cdot;y)$ shown in red is indeed the convex envelope of the original functional $J_0$ as stated in Proposition~\ref{prop2_section2}. The global minimizer of $J_0$ thus corresponds to the unique minimizer of its convex envelope, which matches $J_0$ everywhere except for the intervals $(\alpha_n^-,0)$ and $(0,\alpha_n^+)$ where $J_\Psi$ is affine.  
On the other hand, as expected from Theorem~\ref{th:exact_relax} with~\eqref{eq:sup_inf_cond}, for $\gamma \geq \gamma_\mathrm{thr}$ the relaxed functional preserves the global minimizer of the initial functional and in some cases eliminates its local minimizer.
For $\gamma < \gamma_\mathrm{thr}$, we observe in some cases that the relaxed functional  adds new minimizers, and in particular moves the global one. Note indeed that in this regime $J_\Psi$ is not guaranteed anymore to be an exact relaxation of $J_0$. Finally, it is important  to recall that here $\gamma_\mathrm{thr}$ is computed using the coarser condition~\eqref{eq:sup_inf_cond}.
This explain why in Figures~\ref{subfig:subfig_1D_KL-kl}, \ref{subfig:subfig_1D_KL-Pow} and \ref{subfig:subfig_1D_KL}, $\gamma \geq \gamma_\mathrm{thr}$ leads to  an exact relaxation which does not correspond to the convex envelope. In this simple 1D case, taking $\psi(\cdot)=f(a\cdot;y)$ as stated in Proposition~\ref{prop2_section2} allows to satisfy condition \eqref{eq:general-exact-condition} directly, which leads to the convex envelope.

\begin{figure}
    \centering
    \begin{subfigure}{0.35\textwidth}
        \centering
        \includegraphics[width=\linewidth]{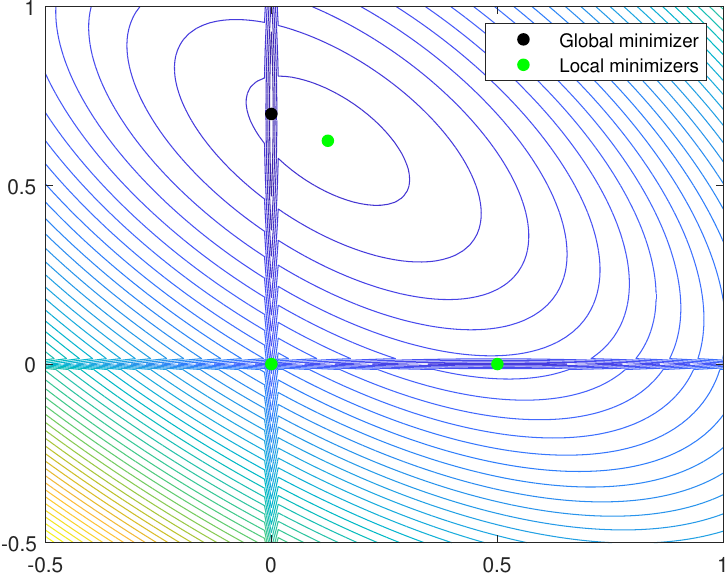}\vspace{-0.2cm}
        \caption{$J_0$.}
         \label{subfig:LS-J0}
    \end{subfigure}
    \begin{subfigure}{0.35\textwidth}
        \centering
        \includegraphics[width=\linewidth]{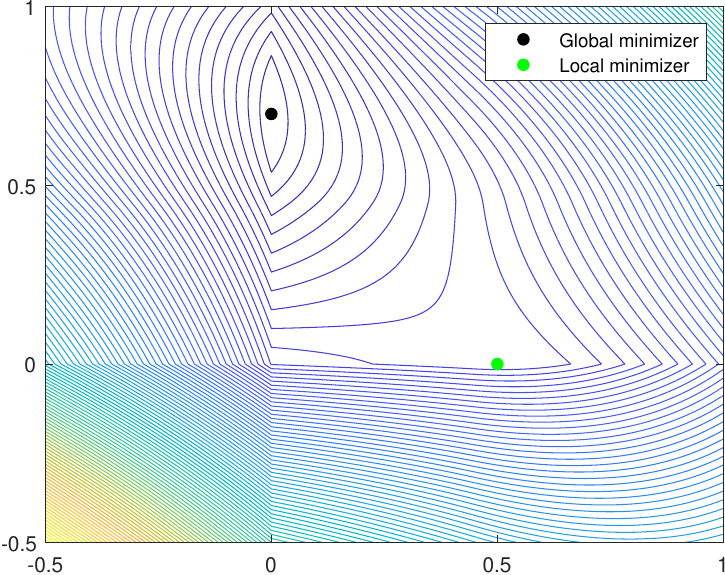}\vspace{-0.2cm}
        \caption{$J_\Psi$, $\bm{\gamma}=\bm{\gamma_{thr}}$.}
        \label{subfig:LS-gamma-thr}
    \end{subfigure}\vspace{-0.2cm}
    \begin{subfigure}{0.35\textwidth}
        \centering
        \includegraphics[width=\linewidth]{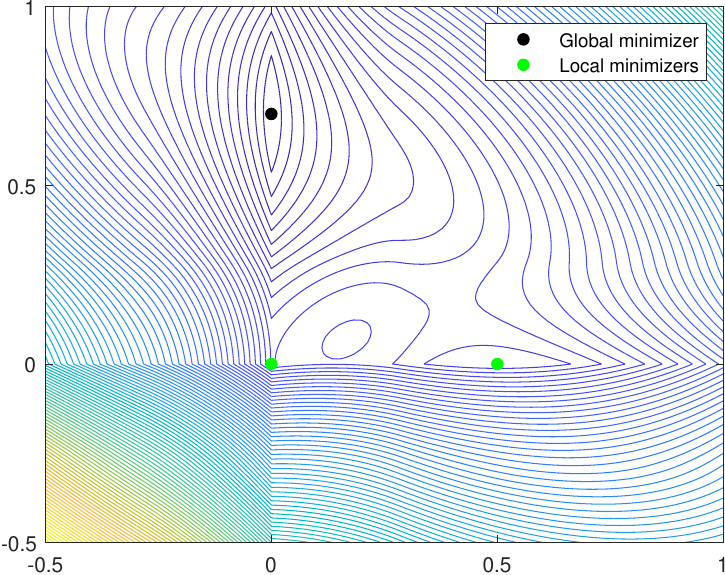}\vspace{-0.2cm}
        \caption{$J_\Psi$, $\bm{\gamma} > \bm{\gamma_{thr}}$.}
         \label{subfig:LS-more-gamma-thr}
    \end{subfigure}
    \begin{subfigure}{0.35\textwidth}
        \centering
        \includegraphics[width=\linewidth]{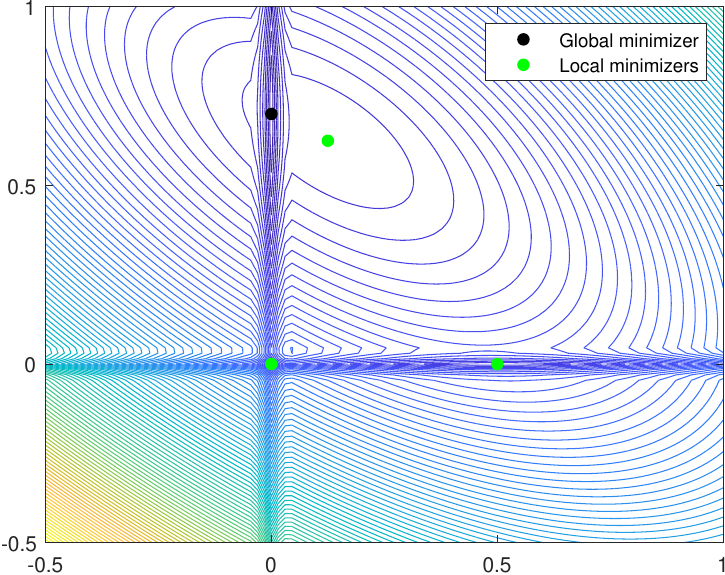}\vspace{-0.2cm}
        \caption{$J_\Psi$, $\bm{\gamma} \gg \bm{\gamma_{thr}}$.}
         \label{subfig:LS-large-gamma-thr}
    \end{subfigure}

    \caption{$F_\y$=LS with  $\A=[3,1;1,3]$, $\y=[1;2]$ and $\lambda_0=0.5$. Level lines of $J_0$  and of $J_\Psi$ for different values of $\bm{\gamma}$. The generating function $\psi$ is the $2$-power function.}
    \label{fig:Level-lines-LS}
\end{figure}

\begin{figure}
    \centering
    \begin{subfigure}{0.35\textwidth}
        \centering
        \includegraphics[width=\linewidth]{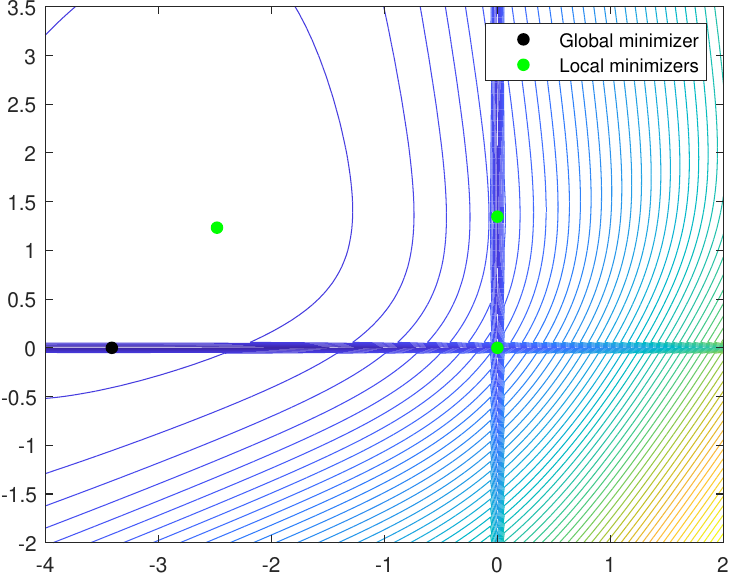}\vspace{-0.2cm}
        \caption{$J_0$.}
         \label{subfig:LR-J0}
    \end{subfigure}
    \begin{subfigure}{0.35\textwidth}
        \centering
        \includegraphics[width=\linewidth]{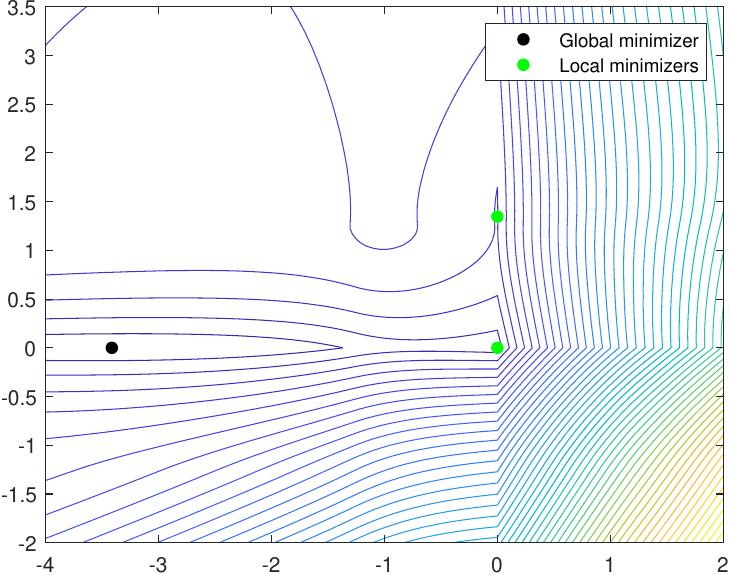}\vspace{-0.2cm}
        \caption{$J_\Psi$, $\bm{\gamma} =\bm{\gamma_{thr}}$.}
         \label{subfig:LR-gamma-thr}
    \end{subfigure}\vspace{-0.2cm}
    \begin{subfigure}{0.35\textwidth}
        \centering
        \includegraphics[width=\linewidth]{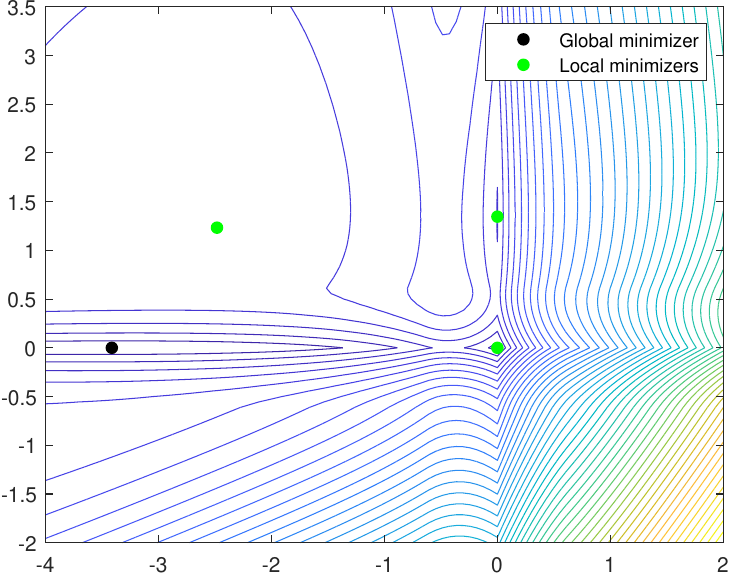}\vspace{-0.2cm}
        \caption{$J_\Psi$, $\bm{\gamma} >  \bm{\gamma_{thr}}$.}
         \label{subfig:LR-more-gamma-thr}
    \end{subfigure}
    \begin{subfigure}{0.35\textwidth}
        \centering
        \includegraphics[width=\linewidth]{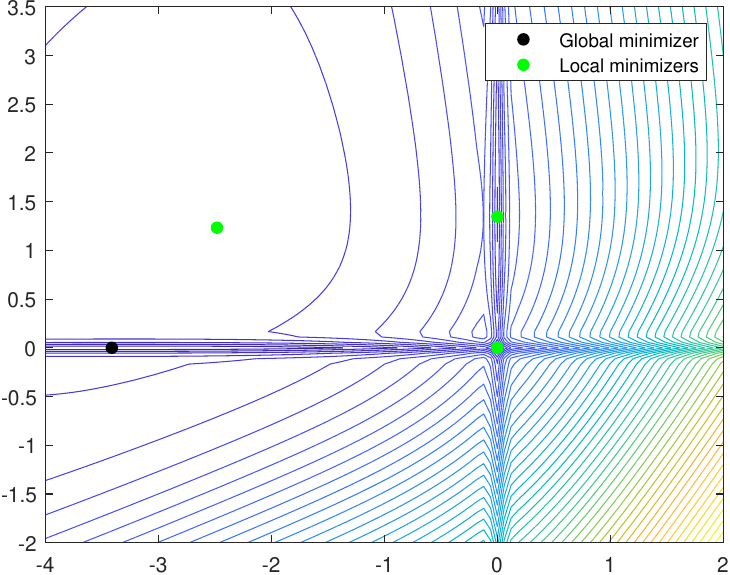}\vspace{-0.2cm}
        \caption{$J_{\Psi}, \bm{\gamma} \gg \bm{\gamma_{thr}}$.}
         \label{subfig:LR-large-gamma-thr}
    \end{subfigure}
    \caption{$F_\y$=LR with $\A=[-1,2;2,0.2]$, $\y=[1;0]$, $\lambda_0=1$, $\lambda_2=0.1$. Level lines of $J_0$ and of $J_\Psi$ for different values of $\bm{\gamma}$. The generating function $\psi$ is the $2$-power function.}
    \label{fig:Level-lines-LR}
\end{figure}

    \begin{figure}[ht!]
    \centering
    \begin{subfigure}{0.3\textwidth}
        \centering
        \includegraphics[width=\linewidth]{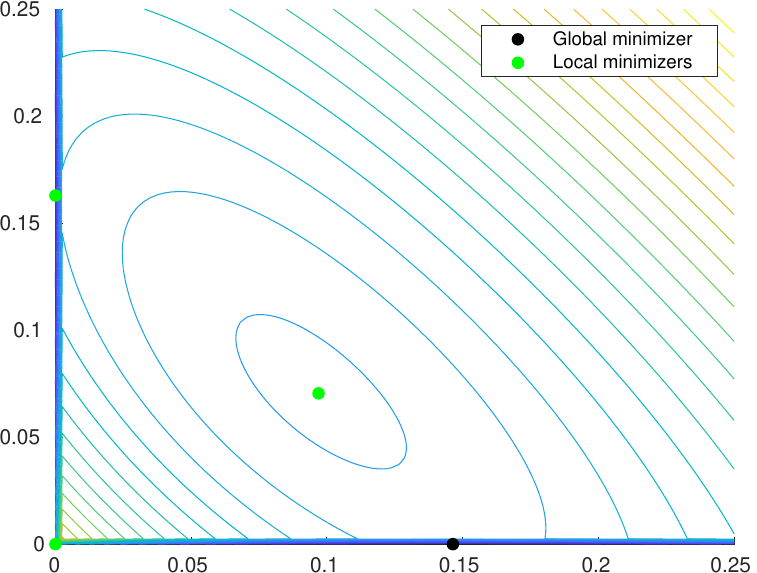}
        \caption{$J_0$.}
        \label{subfig:J0}
    \end{subfigure}
    
    \begin{subfigure}{0.3\textwidth}
        \centering
        \includegraphics[width=\linewidth]{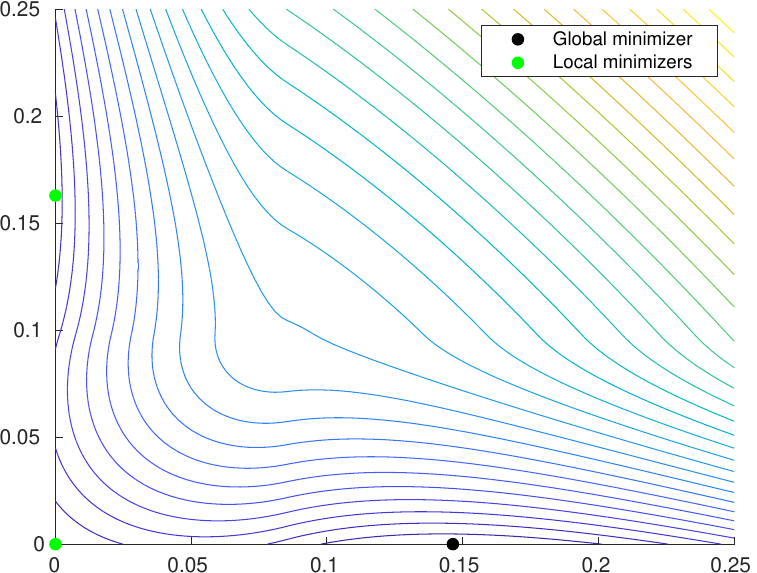}
        \caption{ $p=2$, $\bm{\gamma} = \bm{\gamma_{thr}}$.}
        \label{subfig:pow-thr}
    \end{subfigure}       
      \begin{subfigure}{0.3\textwidth}
        \centering
        \includegraphics[width=\linewidth]{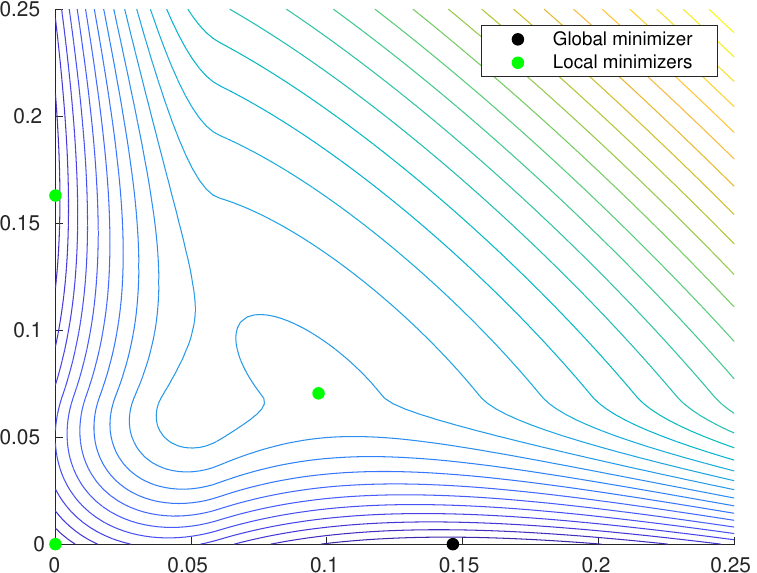}
        \caption{ $p=2$, $\bm{\gamma}>\bm{\gamma_{thr}}$.}
        \label{subfig:pow_2thr}
    \end{subfigure}    
    \begin{subfigure}{0.3\textwidth}
        \centering
        \includegraphics[width=\linewidth]{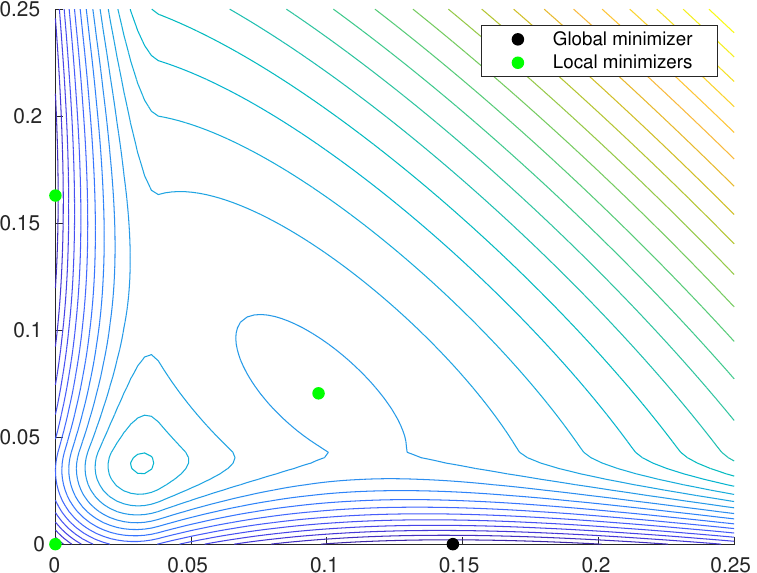}
        \caption{ $p=2$, $\bm{\gamma} \gg \bm{\gamma_{thr}}$.}
        \label{subfig:pow_4thr}
    \end{subfigure}
    \begin{subfigure}{0.3\textwidth}
        \centering
        \includegraphics[width=\linewidth]{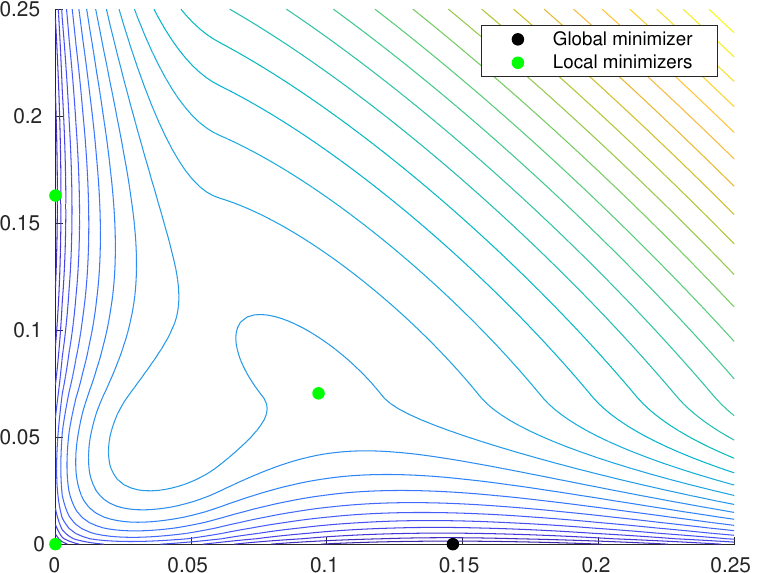}
        \caption{Entropy, $\bm{\gamma} =\bm{\gamma_{thr}}$.}\label{subfig:shan-thr}
    \end{subfigure}    
    \begin{subfigure}{0.3\textwidth}
        \centering
        \includegraphics[width=\linewidth]{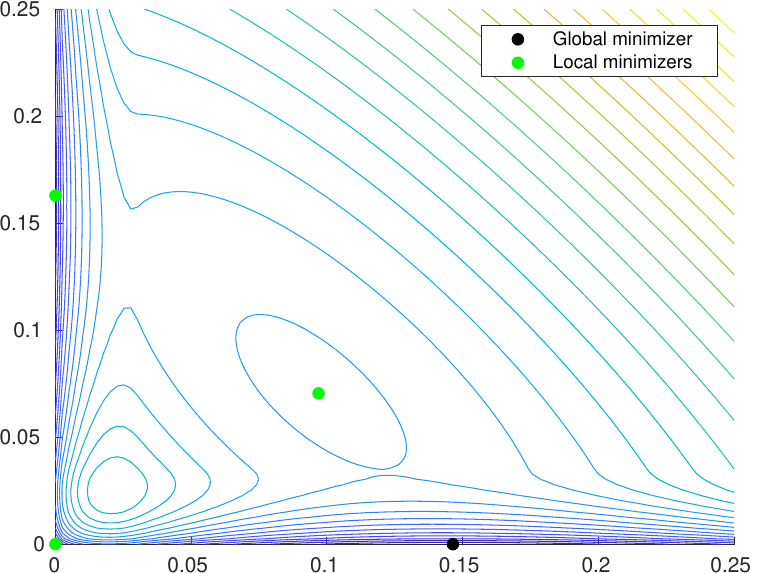}
        \caption{Entropy, $\bm{\gamma} > \bm{\gamma_{thr}}$.}
        \label{subfig:shan-2thr}
    \end{subfigure}    
    \begin{subfigure}{0.3\textwidth}
        \centering
        \includegraphics[width=\linewidth]{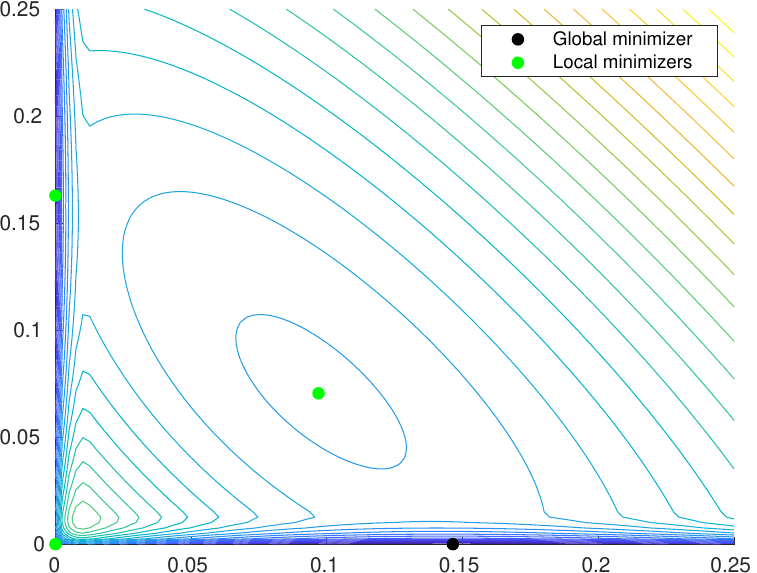}
        \caption{Entropy, $\bm{\gamma} \gg \bm{\gamma_{thr}}$.}\label{subfig:shan-4thr}
    \end{subfigure}    
    \begin{subfigure}{0.3\textwidth}
        \centering
        \includegraphics[width=\linewidth]{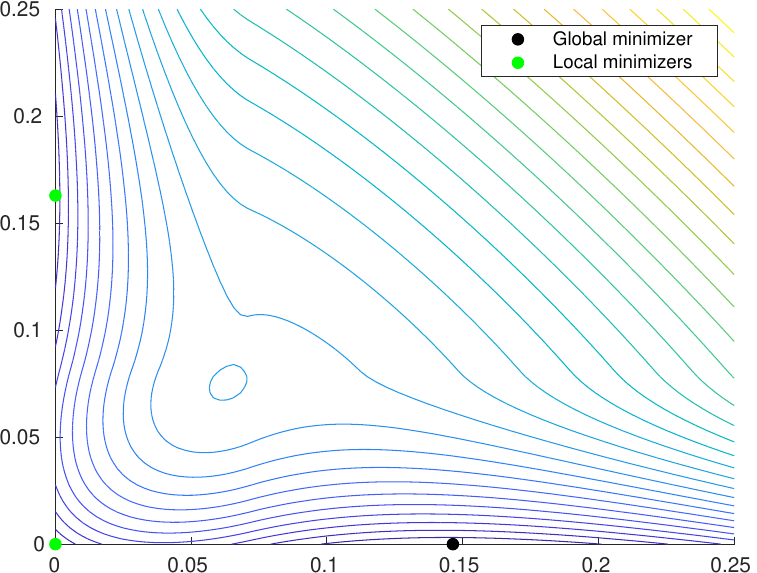}
        \caption{KL, $\bm{\gamma}=\bm{\gamma_{thr}}$.}
        \label{subfig:kl-thr}
    \end{subfigure}    
    \begin{subfigure}{0.3\textwidth}
        \centering
        \includegraphics[width=\linewidth]{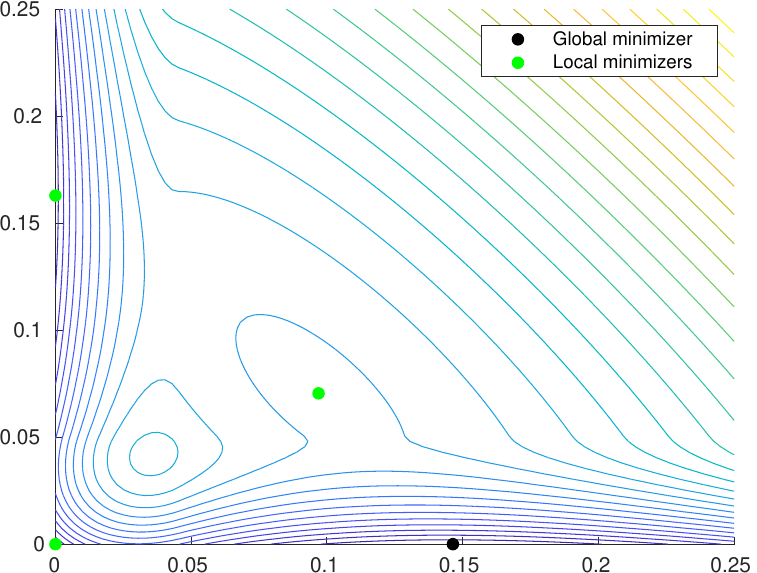}
        \caption{KL, $\bm{\gamma} > \bm{\gamma_{thr}}$.}
        \label{subfig:kl_2thr}
    \end{subfigure}    
    \begin{subfigure}{0.3\textwidth}
        \centering
        \includegraphics[width=\linewidth]{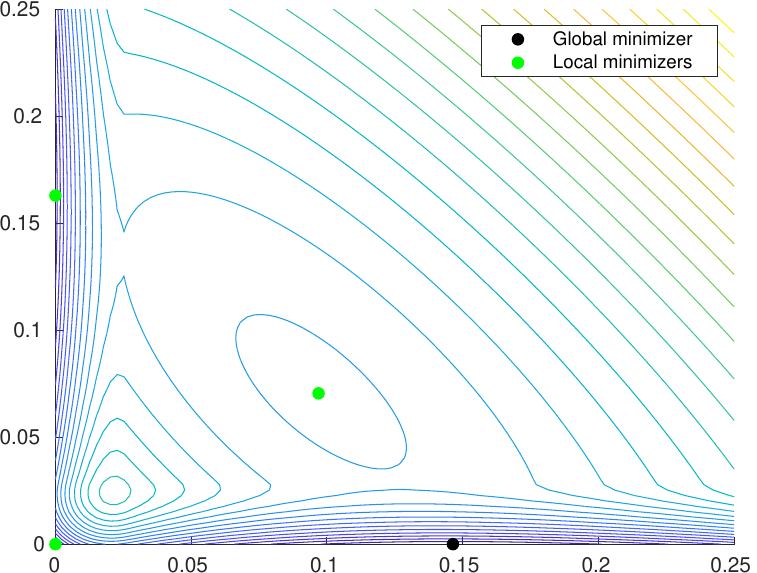}
        \caption{KL, $\bm{\gamma} \gg \bm{\gamma_{thr}}$.}
        \label{subfig:kl_4thr}
    \end{subfigure}
    \caption{$F_\y$=KL with $\A=[0.45,0.8;0.85,0.25]$, $\y=[0.2;0.2]$, $\lambda_0=0.06F_\y(\mathbf{0})$ and $b=0.1$. Level lines of $J_0$ and $J_\Psi$ using the 2-power function, the Shannon-Entropy and the Kullback-Leibler divergence as the generating function $\psi$ for various values of $\bm{\gamma}$.}
    \label{fig:KL-2D}
\end{figure} 

\subsection{Two-dimensional examples}\label{sec:2D-exampls}
We now consider the case $N=M=2$. In these 2D examples, $J_0$ has four minimizers. The first one is $\mathbf{0} \in \R^2$, as stated by Lemma~\ref{lemma:0-is-strict-min}, and the others are obtained through Corollary~\ref{coro:criter_locmin_j0} for $\hat{\sigma} \in \left\lbrace\{1\}, \{2\}, \{1,2\}\right\rbrace$. Figures~\ref{fig:Level-lines-LS} and~\ref{fig:Level-lines-LR} show the iso-levels of $J_0$ with its minimizers as well as those of $J_\Psi$ (using $2$-power generating function) for different values of $\bm{\gamma}$ and for LS and LR data terms. In both examples, the global minimizer of $J_0$ is also the global minimizer of the exact relaxations $J_\Psi$, as ensured by Theorem~\ref{th:exact_relax}. However, we observe that with $\bm{\gamma}=\bm{\gamma}_\mathrm{thr}$, $J_\Psi$ has fewer (local) minimizers in comparison with $J_0$, (Figures~\ref{subfig:LS-gamma-thr} and~\ref{subfig:LR-gamma-thr}). Note that as $\bm{\gamma}$ increases beyond $\bm{\gamma}_\mathrm{thr}$, $J_\Psi$ removes less local minimizers. Finally, for larger values of $\bm{\gamma}$, $J_\Psi$ tends to the original functional, as it can be easily noticed by comparing the iso-levels of $J_\Psi$ with those of $J_0$.
In Figure~\ref{fig:KL-2D}, we plot the iso-levels of $J_0$ with a KL data term and its exact relaxation $J_\Psi$ using 2-power functions, Shannon entropy, and KL as generating functions. The figures show that the global minimizer of $J_0$ is also a global minimizer of $J_\Psi$ in all cases. Moreover, we observe that choosing $2$-power function as generating function leads to an exact relaxation with a better landscape (wider basins and less local minimizers). {Yet, one may be able to find a better relaxation by exploiting directly~\eqref{eq:general-exact-condition} rather than~\eqref{eq:sup_inf_cond}~\cite{essafri2024l0}.}

\subsubsection*{Proximal gradient algorithm performance}\label{sec:numeric_algo}

We now apply the PGA specified in Section \ref{sec:algo} to the previous 2D examples. We use the same model and data $\A,\y$ as in Section~\ref{sec:2D-exampls} and the same parameters  $\lambda_0,\lambda_2$. For each application, PGA on both $J_0$ and the relaxed functionals $J_\Psi$ is initialized with the same point and uses the same fixed step size $0<\rho< 1/L$ where  the Lipschitz constants are given by $L=\|\A\|^2, \; L=(1/4)\|\A\|^2+\lambda_2 \; \text{and} \; L= b^{-2}\|\A\|^2 $ for LS, LR, and KL problems, respectively. We used as a stopping criterion the norm of the relative error (i.e., $\|\x^{k+1}-\x^{k}\|< \varepsilon$) and a maximum number of iterations, so the algorithm stops when either of these criteria is met first. The results are shown in Figure~\ref{fig:iter-BPGA} for different choices of the data terms $F_\y$ and  generating function $\psi$ with always $\bm{\gamma} = \bm{\gamma}_\mathrm{thr}$.  For the LS and LR data terms the generating functions are the $p$-power functions for different values of $p\in\left\{4/3,3/2, 2\right\}$. We observe that for $p=2$ we achieve the global minimizer for both LS and LR, whereas convergence to a local minimizer is observed for $p\in\{4/3,3/2\}$. For the KL case, we consider three different choices of generating functions corresponding to the $2$-power function, the Shannon Entropy and the KL divergence. We observe that the global minimizer is attained when the $2$-power function or KL divergence are used to generate the \BR{} penalty, while convergence to local minimizers is observed when \BR{} is generated with the Shannon entropy. 


\begin{figure}
    \centering
    \begin{subfigure}{0.3\textwidth}
        \centering
        \includegraphics[width=\linewidth]{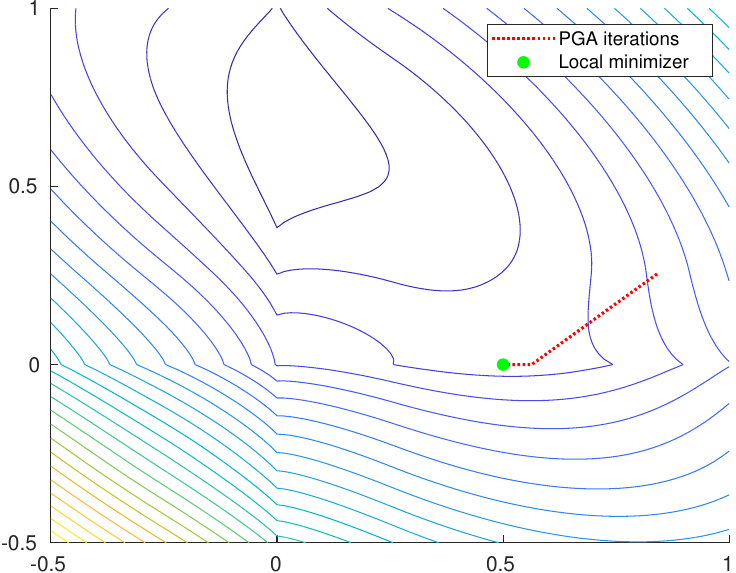}
        \caption{\footnotesize $F_\y$=LS, $\frac{4}{3}$-power function.}
        \label{subfig:pc-ls-1.75}
    \end{subfigure}
       \hfill
    \begin{subfigure}{0.3\textwidth}
        \centering
        \includegraphics[width=\linewidth]{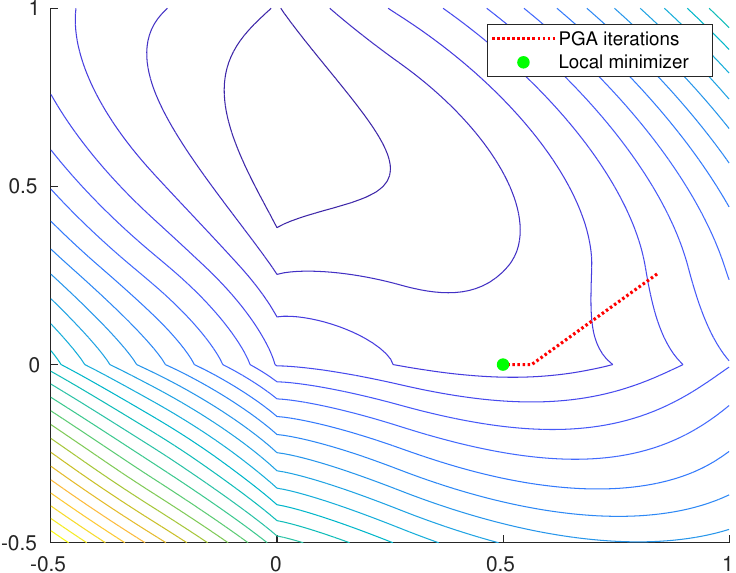}
        \caption{\footnotesize $F_\y$=LS, $\frac{3}{2}$-power function.}
        \label{subfig:pc-ls-1.5}
    \end{subfigure}
    \hfill
    \begin{subfigure}{0.3\textwidth}
        \centering
        \includegraphics[width=\linewidth]{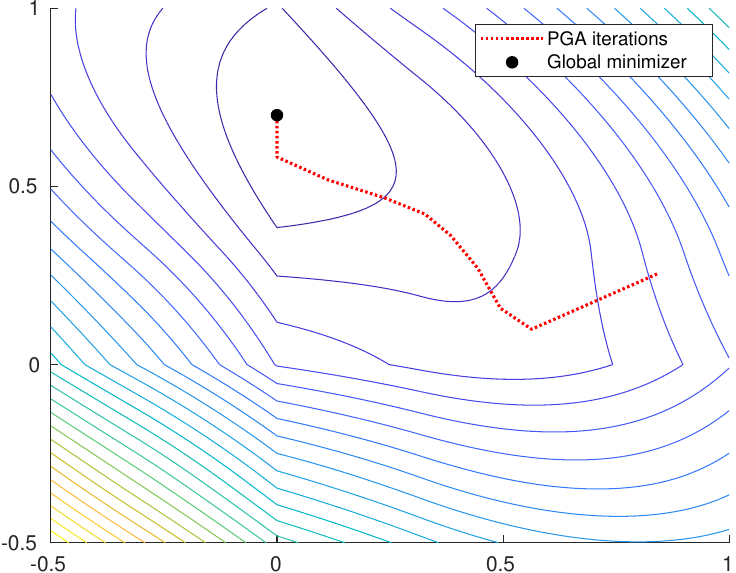}
        \caption{\footnotesize $F_\y$=LS, $2$-power function.}
        \label{subfig:pc-ls-2}
    \end{subfigure}
    \hfill
       \begin{subfigure}{0.3\textwidth}
        \centering
        \includegraphics[width=\linewidth]{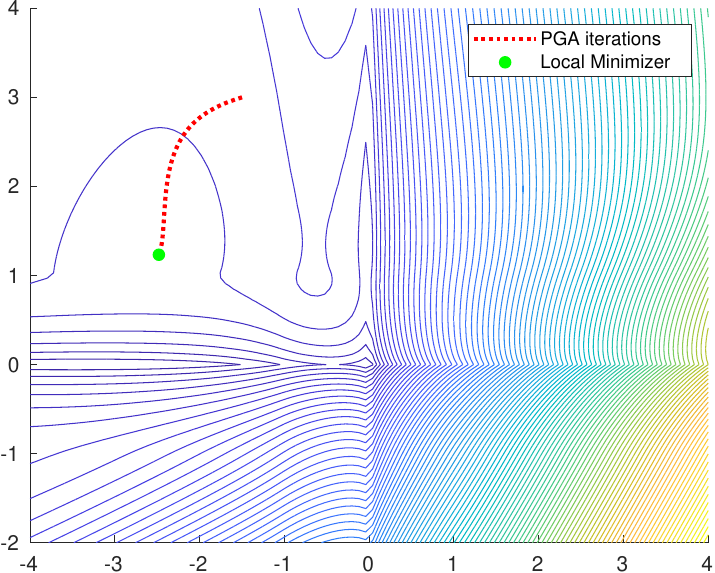}
        \caption{\footnotesize $F_\y$=LR, $\frac{4}{3}$-power function.}
        \label{subfig:pc-lr-1.75}
    \end{subfigure}
       \hfill
     \begin{subfigure}{0.3\textwidth}
        \centering
        \includegraphics[width=\linewidth]{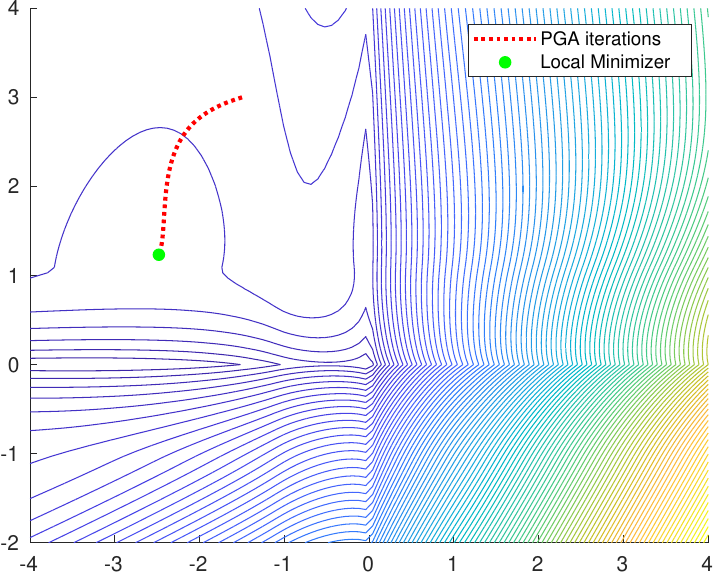}
        \caption{\footnotesize $F_\y$=LR, $\frac{3}{2}$-power function.}
        \label{subfig:pc-lr-1.5}
    \end{subfigure}
    \hfill
    \begin{subfigure}{0.3\textwidth}
        \centering
        \includegraphics[width=\linewidth]{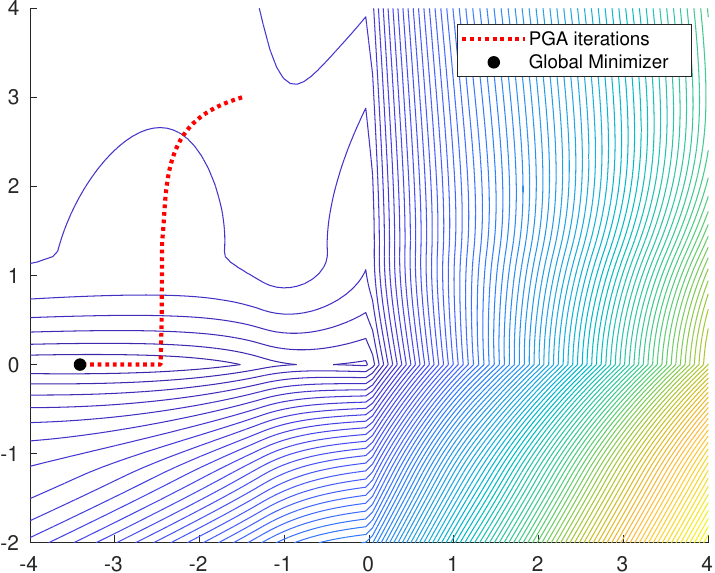}
        \caption{\footnotesize $F_\y$=LR, $2$-power function.}
        \label{subfig:pc-kl-energy}
    \end{subfigure}
    \medskip
        \begin{subfigure}{0.3\textwidth}
        \centering
        \includegraphics[width=\linewidth]{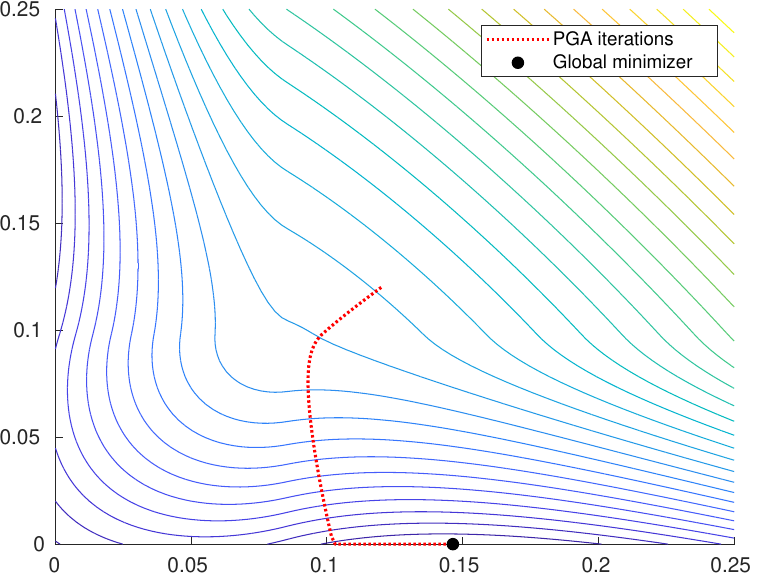}
        \caption{\footnotesize $F_\y$=KL, $2$-power function.}
        \label{subfig:pc-kl-energy}
    \end{subfigure}
       \hfill
      \begin{subfigure}{0.3\textwidth}
        \centering
        \includegraphics[width=\linewidth]{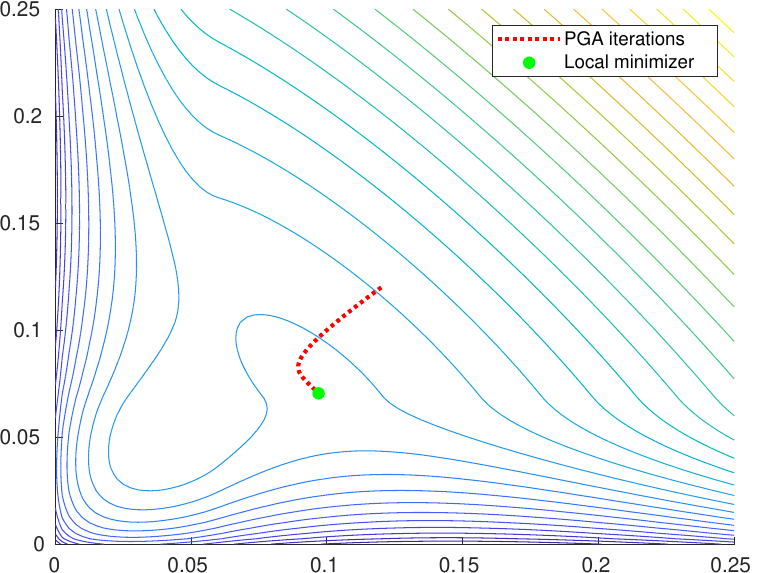}
        \caption{\footnotesize $F_\y$=KL, Shannon entropy.}
        \label{subfig:pc-kl-entropy}
    \end{subfigure}
    \hfill
    \begin{subfigure}{0.3\textwidth}
        \centering
        \includegraphics[width=\linewidth]{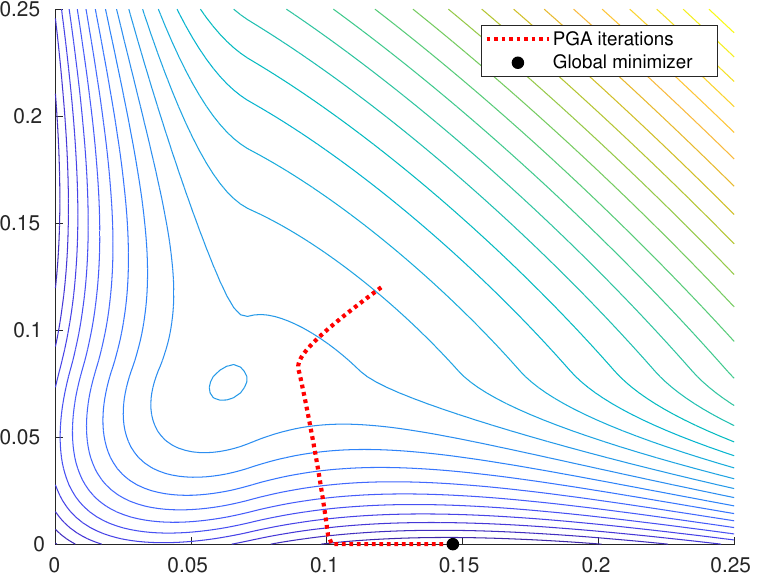}
        \caption{\footnotesize $F_\y$=KL, KL function.}
        \label{subfig:pc-kl-kl}
    \end{subfigure}
    \caption{Trajectory of the iterates $(\x^k)_k$ of PGA used to minimize $J_\Psi$ under different choices of $F_\y\in\left\{\text{LS}, \text{LR}, \text{KL} \right\}$ and generating functions $\psi$.}
    \label{fig:iter-BPGA}
\end{figure}

\subsection{Examples in higher dimensions}\label{sec:high_dim_ex}

We now consider an experiment in higher dimensions. We generate synthetic data with size $M=500$ and $N=1500$. 
The parameter $\lambda_2$ is fixed as $\lambda_2=0$ for both LS and KL problems, and $\lambda_2=0.01$ for LR to guarantee existence, see Theorem~\ref{th:existence-minimizers}.  We fix $\lambda_0 = \alpha F_\y(\mathbf{0})$ with $\alpha \in(0,1)$ manually adjusted such that, on average (among all instances and all tested methods),  the sparsity of the computed solutions corresponds to the one used to generate the data. The parameter $\bm{\gamma}$ verifying \eqref{eq:sup_inf_cond} is here kept fixed to $\bm{\gamma}=\bm{\gamma_{\text{thr}}}$.
For each choice of the data fidelity and the relaxation functional considered, we take $I=100$ realizations of data matrix $\A$ and noisy observation $\y$. Then, we solve these $I$ problems by running the proximal gradient algorithm \eqref{eq:prox-grad-algo} with backtracking line search both on $J_0$ (i.e., a gradient-step on $F_\y$ followed by a hard-thresholding) as well as on the considered relaxed functional $J_\Psi$ using the expressions of proximal points provided by Proposition \ref{prop:prox_formula} and specified for each instance of $B_{\Psi}$. The use of backtracking in these tests has been proved effective due to the potential over-estimation of the Lipschitz constants $L$ of the gradients of the data terms considered, which may result in the underestimation of a constant step-size badly affecting numerical convergence. The same initial point $\x^0=\mathbf{0} \in \Cc^N$ was considered as initialization. As stopping criteria we used a maximal number of 5000 iterations (never attained in our experiments) and a relative tolerance on the difference between two successive iterates of $10^{-6}$. 

\paragraph{Data generation} The realizations $i \in [I]$ are generated according to the data term as follows:
\begin{itemize}
    \item \textit{Least-Squares (LS)}: The rows of $\A_i$ are drawn from a zero mean multivariate normal distribution with covariance matrix $\bm{\Sigma}_\eta$, $\eta \in [0,1)$ given by $[\bm{\Sigma}_\eta]_{m,n} = \eta^{|m-n|}$. The larger $\eta$, the more correlated the columns of $\A_i$ and thus the more difficult the problem. In our experiments we fixed $\eta =0.9$. Normalization is then performed on columns $\a_n$. For each  $i\in[I]$, the observation vector $\y_i$ is  generated as 
    \[
    \y_i=\A_i\bar{\x}_i+\boldsymbol{\upvarepsilon}_i,
    \]
    where $\bar{\x}_i$ is a sparse vector of support $\# \sigma(\bar{\x}_i)=50$ and whose non-zero elements have an absolute amplitude of $1$, with their signs determined by normally distributed random values. The additive white Gaussian noise component $\boldsymbol{\upvarepsilon}_i$ is distributed as $\boldsymbol{\upvarepsilon}_i \sim \mathcal{N}(\mathbf{0}, \sqrt{\varsigma}    \mathbf{I})$, where $\varsigma = \|\A_i\bar{\x}_i\|^2 10^{-\tau/10}$, and $\tau = 8$.
    \item  \textit{Logistic Regression (LR)}: We generated synthetic data following the methodology detailed in~\cite{LRhazim}. For that, we first generated a matrix $\A_i$ as in the LS case. Next, we generated a sparse  vector $\bar{\x}_i$ with $\sharp\sigma(\bar{\x}_i) = 50$ non-zero entries that are equi-spaced and equal to one, representing the true features. Then, the coordinates of the outcome vector $\y_i[m] \in \{0, 1\}$, where $\y_i[m]=1$ is determined by the probability $P(\y_i[m]=1 \, | \,\a_m) = \left( 1 + \e^{-s \left<\a_m,\bar{\x}_i\right>} \right)^{-1}$ with $\a_m$  the m-th row of $\A$ and $s$ fixed to $0.5$ that controls the signal-to-noise ratio. The binary outcomes are then sampled from a Bernoulli distribution based on this probability.
    \item  \textit{Kullback-Leibler (KL)}: The entries of matrix $\A_i=(a_{m,n})_{m,n}$ are independent and identically distributed random variables drawn from a half-normal distribution, i.e., $a_{m,n} = |d_{m,n}|$, where $d_{m,n} \sim \mathcal{N}(0,1)$. For each  $i\in[I]$, the observation vector $\y_i$ is generated by 
    \[
    \y_i = \operatorname{Poisson}(\alpha(\A_i{\bar{\x}_i}+\mathbf{b}))/\alpha,
    \] 
    where ${\bar{\x}_i}$ is a sparse vector with support $\# \sigma({\bar{\x}_i})= 20$ whose nonzero elements are drawn from a uniform distribution, $\mathbf{b}= b\mathbf{e}$ with $b =0.1$ and $\mathbf{e}$ a constant vector of  ones, and $\alpha = 50$ is a gain factor.
\end{itemize}

\paragraph{Relaxations and metrics}
The experiments performed aim to compare the performance of the proximal-gradient algorithm \eqref{eq:prox-grad-algo} in minimizing the $J_0$ functional \eqref{eq:problem_setting} \emph{directly} or \emph{indirectly} through the minimization of the proposed relaxations $J_\Psi$. A good measure to quantify the quality of the minimizer computed is, therefore, the computation of the value of the objective function $J_0$ at convergence of the algorithm, for both problems: the lower, the better. In more detail, for each instance $i \in [I]$, we sort (with possible equality) the objective values $J_0(\hat{\x}^i_{J_0})$ and $J_0(\hat{\x}^i_{J_\Psi})$ (for three choices of the family $\Psi$),  where $\hat{\x}^i_{J_0}$ and $\hat{\x}^i_{J_\Psi}$ are the critical points of $J_0$ and $J_\Psi$ obtained by running PGA  on the $i$-th instance of the problem considered. The functional for which PGA converged to the lowest objective value is ranked 1st (best), the one for which PGA converged to the second lowest objective value is ranked 2nd, etc. For each  functional ($J_0$ and the three instances of $J_\Psi$) we could thus collect the total number of occurrences of being ranked 1st, 2nd, ... among the $I$ generated problems for the three specific choices of the data terms considered. For comparison, we further collected all computing times.

As far as the particular choice of the relaxation functionals $J_\Psi$ is concerned with respect to the family $\Psi$, we considered the following ones:
\begin{itemize}
    \item for the LS and LR data terms: $p$-power generating functions with $p\in\{4/3,3/2,2\}$;
    \item for the KL data term:  $p$-power generating functions with $p\in\{3/2,2\}$ and KL generating function.
\end{itemize}
\change{We recall that \BR{} with the 2-power generating function is nothing but the CEL0~\cite{Soubies2015} penalty,  a specific instance of MCP~\cite{Cun-Hui}.}

\medskip

\begin{remark}
The PGA algorithm is guaranteed to converge only to critical points of the objective functional without the need of them being local minimizers (cf. Proposition~\ref{prop:critical_point_Jpsi} and Corollary~\ref{coro:critical_poin_to_loc_min}). We could 
exploit Corollary~\ref{coro:critical_poin_to_loc_min} to deploy an outer loop around PGA iterates to ensure the convergence to a local minimizer of $J_\Psi$, similarly to the macro algorithm proposed in~\cite{Soubies2015} in the least squares case.
Yet, such critical points that are not local minimizers are typically unstable and very rarely attained. As such, and to keep our illustrative experiments simple, we evaluate our metric on the critical points attained at convergence of the algorithm, without resorting to the macro algorithm strategy. Note that the comparison still remains fair as the use of an outer loop ensuring convergence to local minimizers of $J_\Psi$ can only increase the gap between the performance of PGA on $J_\Psi$ and  $J_0$. Indeed, let $\hat{\x}_{J_\Psi}$ be a critical point of $J_\Psi$ which is not a local minimizer, then the use of an outer loop will necessarily lead to the computation of a local minimizer $\tilde{\x}_{J_\Psi}$ of $J_\Psi$ (and thus of $J_0$) such that $J_0(\tilde{\x}_{J_\Psi}) = J_\Psi(\tilde{\x}_{J_\Psi}) < J_\Psi(\hat{\x}_{J_\Psi}) \leq J_0(\hat{\x}_{J_\Psi})$.
\end{remark}

\paragraph{Results and discussion}
We report in Figure~\ref{fig:histo-all} the histograms of the rankings computed as above.  We first observe that PGA globally performs better in minimizing any exact relaxation $J_\Psi$ rather than the original functional~$J_0$. Yet, this performance varies with the choice of the generating functions used to define \BR{}. 
In particular, among the $p$-power functions, $p=2$ presents the best performance independently on the data term considered. This can be explained by the fact that, for $p \in (1,2]$, the  associated \BR{} admits an interval $[\alpha_n^-,\alpha^+_n]$ (respectively, $[\ell_n^-,\ell_n^+]$) which gets larger (respectively, smaller) as $p$ increases. As such, the larger $p \in (1,2]$, the more are the local minimizers of $J_0$ eliminated by the exact relaxation (cf. Proposition~\ref{Local_minimizers_of_J0_preserved_by_JPsi}). 
Note, however, that using a KL generating function  for problems involving a KL data term improves the performance of PGA, which illustrates the interest of adapting the geometry of the relaxation (that is, choosing a good family $\Psi$) to the geometry of the data term. This is also clearly visible by observing the performance of 2-power generating functions for the LS data term.  In this case,  the concavity condition~\eqref{eq:general-exact-condition} (which for such specific data term is indeed equivalent to~\eqref{eq:sup_inf_cond}) is exploited tightly. In contrast, as far as both the LR and the KL data-term is concerned, all considered \BR{} are defined in terms of the easier (but coarser) concavity condition~\eqref{eq:sup_inf_cond} which could potentially explain the sub-optimality of the results. We believe in fact that tighter relaxations by defining \BR{} relaxations exploiting tailored to the problem at hand and exploiting directly~\eqref{eq:general-exact-condition}, a question that we shall address  in future work.

\medskip

Our experiment shows that no relaxation systematically outperforms all the others. This can be observed even for a few instances of LS problems where PGA reached a better local minimizer by minimizing $J_{\ell_{1.5}}$ than by minimizing $J_{\ell_2}$
. This highlights that the success of a relaxation over another depends not only on its ability to eliminate a higher number of local minimizers but also on the ability of the specific algorithm considered to generate 'optimal' trajectories from a given initial point. In this regard, the development of dedicated solvers (beyond off-the-shelf ones such as PGA) to minimize relaxations $J_\Psi$ which better exploit distinctive properties of B-rex (e.g., Proposition~\ref{Local_minimizers_of_J0_preserved_by_JPsi}) or the generalized duality properties (Proposition \ref{prop:Bregman-composition-of-S}) should be certainly investigated in future work.

\medskip

\begin{remark}
    Note that for a given matrix $\A$, $J_0$ admits less local minimizers when used with a KL data term than with an LS or LR data term, due to the need of the non-negativity constraint. Consequently, in this case the proportion of local minimizers removed by exact relaxation is lower than for the other two data terms. This explains the different ranking histograms observed in Figure~\ref{fig:histo-all} for the KL case.
\end{remark}

\medskip

To conclude, we report in Table~\ref{tab:avg_times} the average computational times required to solve both the original and the relaxed problems discussed above. It is worth commenting that efficiency is largely affected by the computation of proximal points, which depends on the particular choice of the generating functions $\psi_n$. The large standard deviations are due to the number of performed iterations required to achieve the desired tolerance, which varies from an instance of problem to another. 


\begin{figure}
    \centering
    \begin{tikzpicture}
        \node[anchor = north west] (LS) at (0,0) {\includegraphics[width=0.49\textwidth,trim={0cm 0cm 0cm 0.2cm},clip]{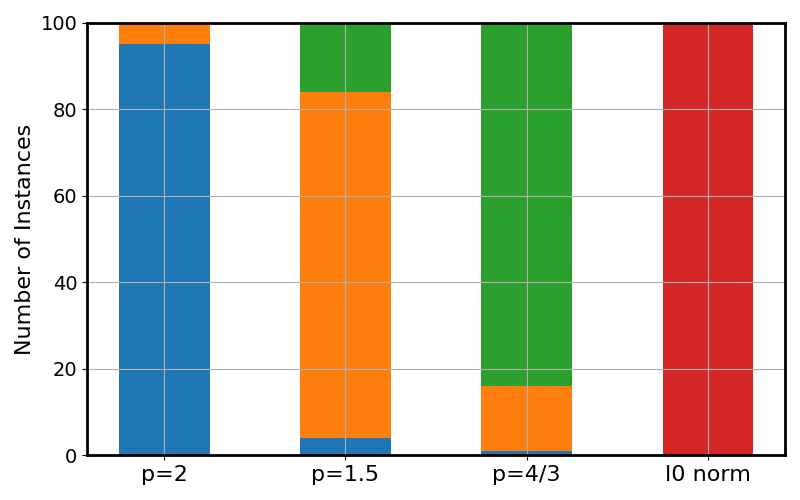}};
        \node at (LS.north) {LS data term};
        \node[anchor = north west] (LR) at (0.5\textwidth,0) {\includegraphics[width=0.49\textwidth,trim={0cm 0cm 0cm 0.2cm},clip]{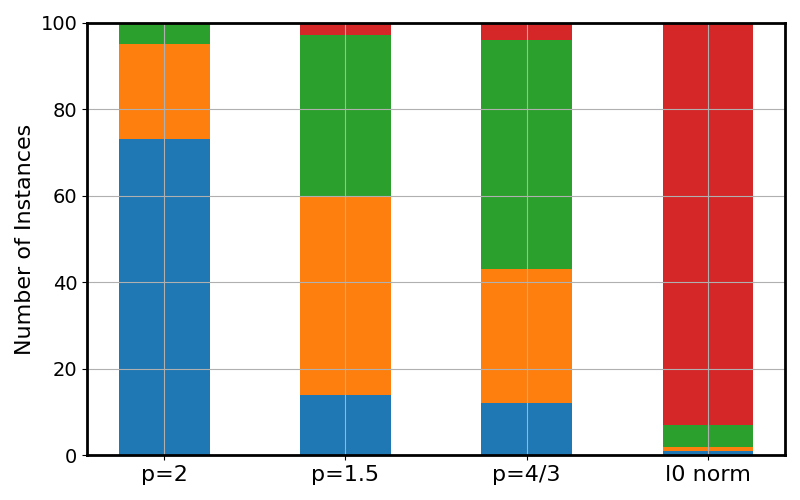}};
        \node at (LR.north) {LR data term};
         \node[anchor = north west] (KL) at (0,-0.35\textwidth) {\includegraphics[width=0.49\textwidth,trim={0cm 0cm 0cm 0.1cm},clip]{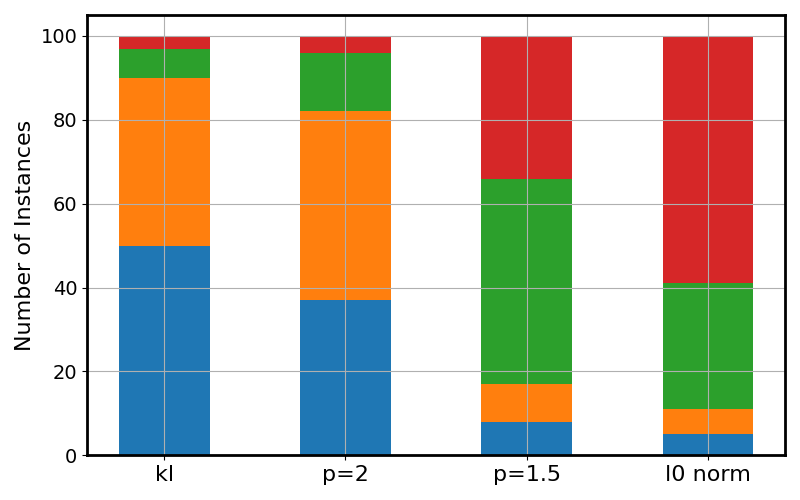}};
        \node at (KL.north) {KL data term};
        \node[anchor = north] (KL) at (0.75\textwidth,-0.33\textwidth) {\includegraphics[width=0.23\textwidth]{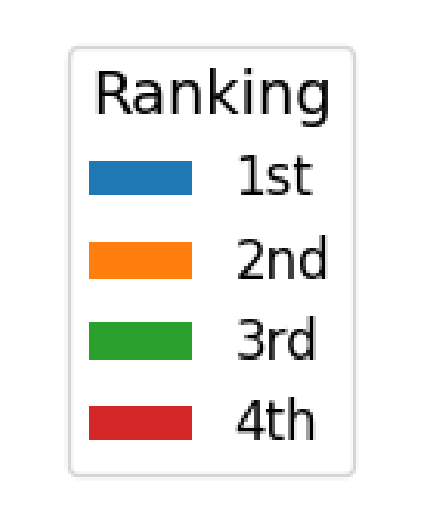}};
    \end{tikzpicture}
    \caption{Comparative ranking distribution of FBS algorithm with backtracking line search applied to the original function $J_0$ and different exact relaxations $J_\Psi$ across $I=100$ generation of problems. The hyper-parameter $\lambda_0$ is such that $\lambda_0 = \alpha F_\y (\mathbf{0})$ with $\alpha = 4\times 10^{-3}$ (for LS data term), $\alpha= 3.8 \times 10^{-3}$ (for LR data term) and $\alpha = 5 \times 10^{-4}$ (for KL data term). The ranking is based on the final objective function values at convergence, as described in the \textit{Metrics} paragraph.}
    \label{fig:histo-all}
\end{figure}

\begin{table}[h!]
\renewcommand{\arraystretch}{1.3}  
\centering
\begin{tabular}{|c|c|c|c|}
\hline
\cellcolor[HTML]{FFFFC7} \textbf{Objective function} & \cellcolor[HTML]{FFFFC7} \textbf{LS (mean $\pm$ std)} & \cellcolor[HTML]{FFFFC7} \textbf{LR (mean $\pm$ std)} & \cellcolor[HTML]{FFFFC7}  \textbf{KL (mean $\pm$ std)} \\ \hline
 $\ell_0$ &  $1.71 \pm 1.29$ & $0.71\pm 1.08$ & $0.21 \pm 0.11$ \\ \hline
$\psi_n = \ell_2$ &  $2.33 \pm 2.41$ & $5.57  \pm 9.01$ & $0.29 \pm 0.05$ \\ \hline
 $\psi_n = \ell_\frac32$ &  $2.10 \pm 1.63$ & $4.06 \pm 3.39$ & $0.88 \pm 0.21$ \\ \hline
 $\psi_n=\ell_\frac43$ or $\psi_n=\mathrm{KL}$ &  $3.68 \pm 2.43$ & $8.00 \pm 13.48$ & $0.49 \pm 0.08$ \\ \hline
\end{tabular}

\caption{Average times (in seconds) over $I=100$ problem instances. The last row refers to the use of generating function $\psi_n$ set to $p = 4/3$ power function for both LS and LR date terms, and  to the KL generating function for KL data term.}
\label{tab:avg_times}
\end{table}

\change{
\paragraph{On recovery errors}

The experiments reported above demonstrate that off-the-shelf algorithms such as PGD are more effective at optimizing the proposed \textit{equivalent} exact relaxations than the original $\ell_0$-regularized problem, thanks to their more favorable optimization landscape. While this constitutes our main motivation stemming from an optimization perspective, we further analyzed whether the minimizers of the relaxed criteria 
are also better in terms of other popular metrics such that F1-score (reflecting the quality of support estimation) and root mean square error (RMSE).

For all LS, LR and KL problems, we generated a matrix $\mathbf{A} \in \R^{500 \times 1000}$ and a ground-truth vector $\bar{\mathbf{x}}$ as described in the previous experiments (with $\eta = 0.8$, $\tau =5$ and $\sharp \|\bar{\x}\|_0 = 50$ for LS, $\eta =0.8$ $s=1$ and $\sharp \|\bar{\x}\|_0 = 25$ for LR. Finaly, for KL, the matrix $\A$ is generated with $\eta=0.1$, using absolute value of each its element. With $\alpha = 50$ and $\sharp \|\bar{\x}\|_0 = 25$. We recall that $\lambda_2 = 0$ for both LS and KL, where for LR, it is fixed to $2 \times10^{-2}$). Then 20 data realizations $\mathbf{y}$ were obtained through different noise generation. For each realization, we solved the considered optimization problems over a grid of regularization strengths $\lambda_0$ and selected the solution yielding the best F1-score. Tables~\ref{tab:F1-score-LS} to~\ref{tab:F1-score-KL} report the average F1-scores and RMSE values (computed over the 20 realizations) for LS, LR, and KL, respectively.

We observe that PGD performs better when applied to the exact relaxations than to the original $\ell_0$-based problem. Furthermore, tailoring the \BR{} relaxation to the geometry of the data fidelity term yields improved results. These results are in agreement with the exact relaxation ones discussed before, which focus on the objective values only.

\begin{table}
\renewcommand{\arraystretch}{1.3}  
\setlength{\tabcolsep}{6pt}
\centering
\begin{tabular}{|c|c|c|c|c|}
\hline
\cellcolor[HTML]{FFFFC7}
 & \cellcolor[HTML]{FFFFC7} $\ell_0$ 
 &\cellcolor[HTML]{FFFFC7}  $\cellcolor[HTML]{FFFFC7} \psi_n = \ell_2$ 
 & \cellcolor[HTML]{FFFFC7} $\psi_n = \ell_{\frac32}$ 
 & \cellcolor[HTML]{FFFFC7}  $\psi_n = \ell_\frac43$ \\
\hline
  F1-score $\pm$ std & $0.820 \pm 0.050  $ & $\mathbf{0.867 \pm 0.026}$& $\underline{0.840 \pm 0.046}$ &$\underline{0.840 \pm 0.041}$ \\
 \hline
 RMSE $\pm$ std & $ 0.479 \pm 0.083$ & $\mathbf{0.451 \pm 0.070}$ & $0.471 \pm 0.073$ & $\underline{0.456 \pm 0.081}$\\
\hline
\end{tabular}
\caption{Least-squares. Best score in bold and second best underlined.}\label{tab:F1-score-LS}
\end{table}

\begin{table}
\renewcommand{\arraystretch}{1.3}  
\setlength{\tabcolsep}{6pt}
\centering
\begin{tabular}{|c|c|c|c|c|}
\hline
\cellcolor[HTML]{FFFFC7}
 &\cellcolor[HTML]{FFFFC7}  $\ell_0$ 
 &\cellcolor[HTML]{FFFFC7}  $\psi_n = \ell_2$ &\cellcolor[HTML]{FFFFC7}  $\psi_n = \ell_{\frac32}$ &\cellcolor[HTML]{FFFFC7}  $\psi_n = \ell_\frac43$ \\
\hline
 F1-score $\pm$ std & $ 0.802 \pm 0.069   $ & $\mathbf{ 0.831 \pm 0.066}$& $\underline{0.820 \pm 0.057}$& $0.823 \pm 0.063  $ \\
 \hline
RMSE $\pm$ std & $ 0.768 \pm 0.361$ & $\mathbf{0.621 \pm 0.185}$ & ${0.671 \pm 0.195}$ & $\underline{0.653 \pm 0.154}$\\
\hline
\end{tabular}
\caption{Logistic regression. Best score in bold and second best underlined.}\label{tab:F1-score-LR}
\end{table}

\begin{table}
\renewcommand{\arraystretch}{1.3}  
\setlength{\tabcolsep}{6pt}
\centering
\begin{tabular}{|c|c|c|c|c|}
\hline
\cellcolor[HTML]{FFFFC7}
 &\cellcolor[HTML]{FFFFC7} $\ell_0$ 
 &\cellcolor[HTML]{FFFFC7} $\psi_n = \ell_2$ 
 &\cellcolor[HTML]{FFFFC7} $\psi_n = \ell_{\frac32}$ 
 &\cellcolor[HTML]{FFFFC7} $\psi_n = \mathrm{KL}$ \\
\hline
 F1-score $\pm$ std & $ 0.883 \pm 0.051   $ & $\mathbf{0.924 \pm 0.036}$& $0.914 \pm 0.038$& $\underline{0.917 \pm 0.031}$ \\
 \hline
RMSE $\pm$ std & $0.420 \pm 0.079$ & $\mathbf{0.391 \pm 0.089}$ & $0.402 \pm 0.081$ & $\underline{0.398 \pm 0.073}$\\
\hline
\end{tabular}
\caption{Kullback Leibler. Best score in bold and second best underlined.}\label{tab:F1-score-KL}
\end{table}

}

\section{Conclusions and outlook}


In this work, we introduced the $\ell_0$-Bregman relaxation (\BR{}), a class of continuous (non-convex) approximations of the $\ell_0$ pseudo-norm leading to exact continuous relaxations of $\ell_0$-based criteria with general (i.e.~non-quadratic) data terms. Our analysis guarantees that replacing the $\ell_0$ term by \BR{} leads to an equivalent optimization problem (same global minimizers) that is continuous and admits less local minimizers. Such relaxed  problems are thus better suited  than the initial one  to be minimized by standard non-convex optimization algorithms, such as proximal gradient algorithm, as illustrated by several numerical experiments. \change{Note that any algorithm (PGD, but also, e.g., ADMM) suited to solve problems in the form~\eqref{eq:problem_setting} via the proximal operator of $\ell_0$ could be readily adapted to minimize the corresponding \BR{} formulation, thanks to the closed-form expression of its proximal operator. Doing so exhibits the advantage of avoiding to be trapped in spurious local minimizers of the original problem, which are effectively eliminated upon relaxation. 

Nonetheless, the proposed exact relaxations remain non-convex and therefore challenging to optimize. In general, the aforementioned off-the-shelf algorithms are not guaranteed -- and typically fail -- to converge to global minimizers. As highlighted in our numerical experiments, the effectiveness of one relaxation over another depends not only on its ability to eliminate spurious local minima, but also on the capacity of the chosen algorithm to follow favorable optimization trajectories given a suitable initialization. An appealing direction for future research is therefore the development of tailored optimization strategies that leverage the favorable properties of \BR{} such as, for instance, the dependency of the set of local minimizers of $J_0$ eliminated by $J_\Psi$ on $\lambda_0$. This could be indeed exploited to enable the use of warm-start strategies in a tree-search manner to efficiently estimate the $\ell_0$-path (i.e., the path of solutions across a range of sparsity levels).
}

\change{

}

\section*{Data availability}
All generated data and code related to this work will be made available online in a repository after publication of the paper.

\section*{Acknowledgments}
The authors are thankful to C\'edric Herzet for helpful comments and suggestions. The authors acknowledge the support received by the projects ANR
MICROBLIND (ANR-21-CE48-0008), ANR JCJC EROSION (ANR-22-CE48-0004) 
ANR JCJC TASKABILE (ANR-22-CE48-0010) and by the GdR ISIS project SPLIN.
The work of LC was supported by the funding received from the European Research Council (ERC) Starting project MALIN under the European Union’s Horizon Europe programme. This output reflects only the views of the authors. The European Commission and the other organizations are not responsible for any use that may be made of the information it contains.

\begin{appendices}
\section{Existence and characterization of minimizers of $J_0$}\label{appendix:existence}

\subsection{Existence of solutions}

The objective of this appendix is to prove the existence of solutions to problems of the form
\begin{equation}\label{eq:jphi}
    \hat \x \in \argmin_{\x \in \Cc^N} \; J_\Phi(\x) := F_\y(\A\x) + \Phi(\x),
\end{equation}
where $F_\y$ is coercive, $\Cc \in \{\R, \R_{\geq 0}\}$ and  $\Phi(\x) = \sum_{n=1}^N \phi_n(x_n)$ where the functions $\phi_n:\Cc\to\R$ are lower semi-continuous and satisfy
\begin{equation}\label{eq:cond_phi}
\begin{cases}
    \phi_n(0)=0, \\
     \phi_n(x) \in [0,\lambda], \quad &\forall x \in (\alpha_n^-,\alpha_n^+)\\
    \phi_n(x) = \lambda, \quad &\forall x \in \R \backslash (\alpha_n^-,\alpha_n^+),
\end{cases}
\end{equation}
for $\lambda > 0$, $\alpha_n^- \leq 0$ and $\alpha_n^+ \geq 0$. Such choice includes the $\ell_0$ pseudo-norm, as well as folded concave penalties such as SCAD~\cite{Fan2001VariableSV} MCP~\cite{Cun-Hui} or the proposed \BR{} \eqref{eq:prop1_equa2}. As $\Phi$ is bounded and $F_\y$ is coercive 
, the existence of global minimizers for $J_\Phi$ is trivial when $\mathrm{ker}(\A) = \{\bm{0}\}$.

The case $\mathrm{ker}(\A) \neq \{\bm{0}\}$ is significantly more involved. Indeed, despite the coercivity of $F_\y$, the composition $F_\y(\A\cdot)$ is not anymore coercive as, for any $\v \in \mathrm{ker}(\A) \setminus \{\bm{0}\}$, we have $F_\y(\A(\eta \v)) \to 0$ when $\eta \to \infty$. Following~\cite{nikolova2013description} where the author treated the $\ell_0$-regularized least-squares problem, we exploit here the notion of asymptotically stable functions to prove the existence of solutions to~\eqref{eq:jphi} in the general case.

\begin{definition}[Asymptotically stable functions~\cite{AuslenderTeboulle}]\label{def:asymptotically-level-stable}
    A l.s.c.~and proper function $f : \R^N \to \R \cup \left\lbrace + \infty \right\rbrace$  is said to be  asymptotically level stable if for each~$\rho >0$, each bounded sequence~$\left\lbrace \beta^{(k)} \right\rbrace  \subset \R  $ and each sequence $\left\lbrace \x^{(k)}\right\rbrace \subset \R^N$ satisfying 
    \begin{equation}\label{eq:seqences_level_stable}
        \x^{(k)} \in \operatorname{lev}\left( f,\beta^{(k)} \right), \quad \|\x^{(k)}\| \to +\infty , \quad \x^{(k)} \|\x^{(k)}\|^{-1} \to \hat{
\x} \in \ker\left(f_{\infty} \right)
,
    \end{equation}
    where $\operatorname{lev}\left( f,\beta^{(k)} \right)$ denotes the $\beta^{(k)}$ sublevel set of $f$ and $f_{\infty}$ denotes the asymptotic (or recession) function\footnote{From~\cite[Theorem 2.5.1]{AuslenderTeboulle}, we have that, for a proper, l.s.c. function $f : \R^N \to \R \cup \{+\infty\}$, for all  $\d \in \R^N$,  $f_{\infty}(\d)=\liminf _{\substack{\d^{\prime} \rightarrow \d \\ t \rightarrow+\infty}} \frac{f\left(t \d^{\prime}\right)}{t}
.$} of $f$,
    there exists~$k_0\in\mathbb{N}$ such that 
    \begin{equation}\label{eq:seqences_level_stable-2}
        (\x^{(k)} -\rho\hat{\x}) \in \operatorname{lev}\left( f, \beta^{(k)} \right) \quad \forall k \geq k_0
        .
    \end{equation}
\end{definition}

A coercive function $f$ satisfies $f_{\infty}(\d) > 0$ for all $\d \neq \mathbf{0}$~\cite[Definition 3.1.1]{AuslenderTeboulle} and thus $\ker\left(f_{\infty} \right) = \{\bm{0}\}$. Hence, it is asymptotically level stable since for each bounded sequence $\left\lbrace \beta^{(k)} \right\rbrace$ there does not exist any sequence $\left\lbrace \x^{(k)}\right\rbrace$ satisfying \eqref{eq:seqences_level_stable}. Let us now provide an intuition on why this notion of  asymptotically level stable functions allows to ensure existence of minimizers for non-coercive functions. First, let us observe that if $f$ is not coercive, then $\ker\left(f_{\infty} \right) \neq  \{\bm{0}\}$ and the vectors $\d \in \ker\left(f_{\infty} \right) \setminus  \{\bm{0}\}$ can be interpreted as the directions along which $f$ lacks coercivity. Hence, a sequence $\left\lbrace \x^{(k)}\right\rbrace$ satisfying~\eqref{eq:seqences_level_stable} belongs to a vector subspace generated by such a direction where $f$ is not coercive. Moreover, this sequence is such that $\{f(\x^{(k)})\}$ is bounded by definition. Taking $\beta^{(k)} = f(\x^{(k)})$ we get from~\eqref{eq:seqences_level_stable-2} that there exists $k_0$ such that for all $k \geq k_0$,
\begin{equation}
    f(\x^{(k)} -\rho\hat{\x}) \leq f(\x^{(k)})
\end{equation}
showing that $-\hat{\x}$ is a descent direction of $f$ at $\x^{(k)}$.  Combining this with the fact that $\|\x^{(k)}\| \to + \infty$ (by definition) gives the intuition that $\rho \mapsto  f(\rho\hat{\x})$, besides not being coercive, should admit a global minimizer. 

The following Proposition~\ref{prop:J0_ALS} shows that the objective function $J_\Psi$ of  Problem~\eqref{eq:jphi} is asymptotically level stable, which indeed  allows us to show desired existence result in Theorem \ref{th:existence-general}.

\begin{proposition}\label{prop:J0_ALS}
    Let $F_\y$ be coercive function. Then under Assumption~\ref{assump} the functional $J_\Phi$ in \eqref{eq:jphi} is asymptotically level stable.
\end{proposition}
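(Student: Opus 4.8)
The plan is to verify Definition~\ref{def:asymptotically-level-stable} directly, the linchpin being the preliminary claim that $\ker\bigl((J_\Phi)_\infty\bigr) \subseteq \ker(\A)$. To establish this, I would first note that each $\phi_n$ is nonnegative by~\eqref{eq:cond_phi}, so $\Phi \geq 0$ and hence $(J_\Phi)_\infty(\d) \geq (F_\y\circ\A)_\infty(\d)$ for every direction $\d$. For any $\d$ with $\A\d \neq \mathbf{0}$, I would then compare the asymptotic function of the composition with that of $F_\y$: since the perturbations $\d' \to \d$ map into a subset of the perturbations $\z' \to \A\d$ used to define $(F_\y)_\infty$, monotonicity of the liminf under restriction gives $(F_\y\circ\A)_\infty(\d) \geq (F_\y)_\infty(\A\d)$, and the coercivity of $F_\y$ yields $(F_\y)_\infty(\A\d) > 0$ by \cite[Definition 3.1.1]{AuslenderTeboulle}. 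Consequently $(J_\Phi)_\infty(\d) > 0$ whenever $\A\d \neq \mathbf{0}$, which is exactly the claimed inclusion.

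With this in hand, I would take $\rho > 0$, a bounded sequence $\{\beta^{(k)}\}$, and a sequence $\{\x^{(k)}\}$ satisfying~\eqref{eq:seqences_level_stable}. By the inclusion just proved, the limiting direction $\hat{\x}$ belongs to $\ker(\A)$, so $\A\hat{\x} = \mathbf{0}$ and therefore $\A(\x^{(k)} - \rho\hat{\x}) = \A\x^{(k)}$. The data-fidelity contribution is thus invariant under the shift, $F_\y\bigl(\A(\x^{(k)} - \rho\hat{\x})\bigr) = F_\y(\A\x^{(k)})$, and the whole problem reduces to showing $\Phi(\x^{(k)} - \rho\hat{\x}) \leq \Phi(\x^{(k)})$ for all large $k$, since this will give $J_\Phi(\x^{(k)} - \rho\hat{\x}) \leq J_\Phi(\x^{(k)}) \leq \beta^{(k)}$, i.e.~\eqref{eq:seqences_level_stable-2}.

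To prove the required inequality I would exploit separability of $\Phi$. For indices $n$ with $\hat{x}_n = 0$ the $n$-th components of $\x^{(k)}$ and $\x^{(k)} - \rho\hat{\x}$ coincide, so the corresponding terms are unchanged. For indices $n$ with $\hat{x}_n \neq 0$, from $\x^{(k)}/\|\x^{(k)}\| \to \hat{\x}$ together with $\|\x^{(k)}\| \to +\infty$ I would deduce $|x^{(k)}_n| \to +\infty$; hence there is $k_0$ (beyond finitely many such component thresholds) so that for all $k \geq k_0$ and all such $n$ one has $x^{(k)}_n \notin (\alpha_n^-, \alpha_n^+)$, which forces $\phi_n(x^{(k)}_n) = \lambda$ by~\eqref{eq:cond_phi}. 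Since $\lambda$ is the maximal value attained by $\phi_n$, it follows that $\phi_n(x^{(k)}_n - \rho\hat{x}_n) \leq \lambda = \phi_n(x^{(k)}_n)$. Summing the component-wise inequalities over all $n$ yields $\Phi(\x^{(k)} - \rho\hat{\x}) \leq \Phi(\x^{(k)})$ for every $k \geq k_0$, completing the verification.

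I expect the only delicate point to be the preliminary step $(F_\y\circ\A)_\infty(\d) \geq (F_\y)_\infty(\A\d)$, that is, handling the asymptotic function of a composition with a possibly non-surjective linear map and interchanging it with the liminf over perturbations; the addition of $\Phi$ is harmless precisely because $\Phi$ is bounded, so it contributes nothing to the recession function. All remaining steps are routine consequences of the coercivity characterization of $F_\y$ and of the saturation structure of $\Phi$ encoded in~\eqref{eq:cond_phi}, so the boundedness of $\{\beta^{(k)}\}$ plays no role beyond its appearance in the definition.
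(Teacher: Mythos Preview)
Your proof is correct and follows essentially the same approach as the paper's: identify $\hat{\x}\in\ker(\A)$ from the asymptotic-function condition, use separability of $\Phi$ together with the saturation property~\eqref{eq:cond_phi} to get $\Phi(\x^{(k)}-\rho\hat{\x})\le\Phi(\x^{(k)})$ for large $k$, and conclude via invariance of the data term under shifts in $\ker(\A)$. The only minor difference is in the preliminary step: the paper computes $(J_\Phi)_\infty(\d)=(F_\y)_\infty(\A\d)$ exactly (using that $\Phi(t\d)$ is eventually constant in $t$), whereas you establish only the inclusion $\ker((J_\Phi)_\infty)\subseteq\ker(\A)$ via the chain $(J_\Phi)_\infty\ge(F_\y\circ\A)_\infty\ge(F_\y)_\infty(\A\cdot)$—which is indeed all that is needed.
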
 
\begin{proof}
First, let us prove that $\ker\left( (J_\Phi)_{\infty } \right)= \ker\left( \A \right) $. 
 We have from~\cite[Theorem 2.5.1]{AuslenderTeboulle}
\begin{align*}
(J_\Phi)_{\infty}(\x) & = \liminf_{\substack{\mathbf{x}' \rightarrow \x \\ t \rightarrow +\infty}}~ \frac{J_\Phi(t\A \x')}{t}  = \liminf_{\substack{\x' \rightarrow \x \\ t \rightarrow +\infty}} ~\frac{F_\y(t \A \x') + \Phi(t\x')}{t} \\
& = \liminf_{\substack{\x' \rightarrow \x \\ t \rightarrow +\infty}} ~\frac{F_\y(t \A \x')}{t} + \frac{\sharp\sigma(\x) \lambda}{t} =\liminf_{\substack{\x' \rightarrow \x \\ t \rightarrow +\infty}}~ \frac{F_\y(t \A \x')}{t} \\
& = \left( F_\y \right)_{\infty} \left( \A\x\right).
\end{align*}
where we used the fact that $\Phi$ is bounded and $\Phi(t\x)=\sharp\sigma(\x) \lambda$ for $t$ sufficiently large (from conditions~\eqref{eq:cond_phi}). Moreover, given that $F_\y$ is coercive 
, we have that $f_{\infty}(\d) > 0$ for all $\d \neq \mathbf{0}$~\cite[Definition 3.1.1]{AuslenderTeboulle}. It then follows that $(J_\Phi)_{\infty}(\x) = \left( F_\y \right)_{\infty} \left( \A\x\right) \neq 0$ if and only if $\x \notin \mathrm{ker}(\A)$. This means that $\ker((J_\Phi))_\infty) = \ker(\A)$. 

 Let now $\left\lbrace \beta^{(k)} \right\rbrace \subset \R $ be a bounded sequence, and let $\left\lbrace\x^{(k)}  \right\rbrace \subset \R^N$ satisfy~\eqref{eq:seqences_level_stable}  with $\x^{(k)}  \|\x^{(k)} \|^{-1} \to \hat{\x} \in \ker\left( (J_ \Phi)_{\infty} \right)$. For $\rho>0$, let us compare $\Phi\left(\x^{(k)}  -\rho \hat{\x}\right) $ and $\Phi\left(\x^{(k)}\right)$:
\begin{itemize}
    \item If $n \in \sigma\left( \hat{\x}\right)$, then $\hat{x}_n= \lim_{\substack{k \to +\infty}} {x^{(k)}_n}{\|\x^{(k)}\|^{-1}} \neq 0$. It then follows that $x^{(k)}_n$ itself must be growing unbounded. Thus there exists $k_n \in \N$ such that $\forall k \geq k_n$
\begin{equation}\label{eq:compare1}
        |x^{(k)}_n| > \max\left\lbrace |\alpha^-_n|,\alpha^+_n\right\rbrace \quad  
        \Longrightarrow \quad  \phi\left(x^{(k)}_n-\rho \hat{x}_n\right) \underset{\eqref{eq:cond_phi}}{\leq} \lambda=\phi\left(x^{(k)}_n\right).
    \end{equation}
    \item If $n \in \sigma^c \left( \hat{\x}\right)$, then  
 \begin{equation}\label{eq:compare2}
       x^{(k)}_n -\rho \hat{x}_n =  x^{(k)}_n  \quad  
        \Longrightarrow \quad  \phi\left(x^{(k)}_n-\rho \hat{x}_n\right)=\phi\left(x^{(k)}_n\right).
   \end{equation}
\end{itemize}
Defining $k_0:=\max_{ n \in \sigma(\hat \x)} k_n $, we get from~\eqref{eq:compare1} and~\eqref{eq:compare2} that 
\begin{equation}\label{eq:norm0_level_stable}
    \Phi\left(\x^{(k)}-\rho \hat{\x} \right) \leq 
  \Phi\left(\x^{(k)}\right), \quad \forall k \geq k_0.
\end{equation}
Moreover, using the fact that $\hat{\x} \in  \ker\left( (J_\Phi )_{\infty }  \right) = \ker\left( \A \right)$, we have
\begin{equation}\label{eq:level_sstable_F}
    F_\y \left( \A(\x^{(k)}-\rho\hat{\x}) \right) =   F_\y \left( \A\x^{(k)}\right)
    .
\end{equation}
Finally, combining~\eqref{eq:norm0_level_stable} and~\eqref{eq:level_sstable_F}, we get  
\begin{equation}
    J_\Phi\left(\x^{(k)}-\rho \hat{\x}\right) \leq   J_\Phi\left(\x^{(k)}\right) \leq \beta^{(k)} \quad \forall k \geq k_0
    ,
\end{equation}
which completes the proof.
\end{proof}

\begin{theorem}[Existence of solutions to Problem~\eqref{eq:jphi}] \label{th:existence-general}
   Let $F_\y$ be a coercive function. Then under Assumption~\ref{assump}, the solution set of Problem~\eqref{eq:jphi}
   is nonempty.
\end{theorem}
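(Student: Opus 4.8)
The plan is to leverage Proposition~\ref{prop:J0_ALS}, which already identifies $J_\Phi$ as asymptotically level stable, and to combine it with the classical existence result for such functions~\cite{AuslenderTeboulle}: a proper, l.s.c., asymptotically level stable function whose asymptotic function is nonnegative attains its infimum. The bulk of the remaining work is therefore to verify the three auxiliary hypotheses — properness, lower semicontinuity, and boundedness from below — and then either to invoke the cited theorem or to reproduce its short argument.

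First I would record that $J_\Phi$ is proper and l.s.c. Properness follows from $J_\Phi(\mathbf{0}) = F_\y(\mathbf{0}) < +\infty$ together with the lower bound established next; lower semicontinuity holds because $\x \mapsto F_\y(\A\x)$ is continuous (under Assumption~\ref{assump}) while each $\phi_n$ is l.s.c. by hypothesis in~\eqref{eq:cond_phi}. Boundedness from below is immediate: Assumption~\ref{assump} guarantees that each $f(\cdot;y)$ is bounded from below, hence so is $F_\y$, and $\Phi \geq 0$ by~\eqref{eq:cond_phi}, so $\inf_{\x \in \Cc^N} J_\Phi > -\infty$. Finally, the computation carried out inside the proof of Proposition~\ref{prop:J0_ALS} shows $(J_\Phi)_\infty(\x) = (F_\y)_\infty(\A\x) \geq 0$ for all $\x$, since coercivity of $F_\y$ forces $(F_\y)_\infty(\d) > 0$ for $\d \neq \mathbf{0}$.

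With these facts in hand, I would either cite the existence theorem for asymptotically level stable functions directly, or argue as follows. Take a minimizing sequence $\{\x^{(k)}\}$ and set $\beta^{(k)} := J_\Phi(\x^{(k)}) \to \inf J_\Phi$; the sequence $\{\beta^{(k)}\}$ is bounded. If $\{\x^{(k)}\}$ admits a bounded subsequence, lower semicontinuity yields a minimizer as a cluster point and we are done. The only delicate case — and the main obstacle — is when $\|\x^{(k)}\| \to +\infty$. Passing to a subsequence I may assume $\x^{(k)}\|\x^{(k)}\|^{-1} \to \hat\x$ with $\|\hat\x\|=1$; boundedness of $\{\beta^{(k)}\}$ together with the definition of the asymptotic function forces $(J_\Phi)_\infty(\hat\x) \leq 0$, hence $\hat\x \in \ker((J_\Phi)_\infty) = \ker(\A)$ by Proposition~\ref{prop:J0_ALS}. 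Asymptotic level stability (Definition~\ref{def:asymptotically-level-stable}) then provides, for any $\rho > 0$, an index $k_0$ with $J_\Phi(\x^{(k)} - \rho\hat\x) \leq \beta^{(k)}$ for $k \geq k_0$; since $\hat\x \in \ker(\A)$, shifting along $-\hat\x$ leaves the data term unchanged and does not increase the objective. Iterating this shift to remove the unbounded component (as in~\cite{AuslenderTeboulle}) produces a new minimizing sequence confined to a bounded region, from which a convergent subsequence and a minimizer are extracted by lower semicontinuity. The technical care required to make the ``removal of the unbounded component'' step rigorous — guaranteeing that the shifted iterates stay minimizing and are genuinely bounded — is where the real effort lies; everything else is bookkeeping already prepared by Proposition~\ref{prop:J0_ALS}.
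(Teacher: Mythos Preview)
Your proposal is correct and follows essentially the same route as the paper: verify that $J_\Phi$ is proper, l.s.c., bounded from below (hence $(J_\Phi)_\infty \geq 0$), invoke Proposition~\ref{prop:J0_ALS} for asymptotic level stability, and then apply the existence result of~\cite[Corollary~3.4.3]{AuslenderTeboulle}. The paper's proof is just the terse version of this, citing the corollary directly rather than sketching its mechanism; your optional direct argument is a faithful outline of that corollary's proof, though if you carry it out fully you should take care that the shifted iterates $\x^{(k)} - \rho\hat\x$ remain in $\Cc^N$ when $\Cc = \R_{\geq 0}$.
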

\begin{proof}
Under Assumption~\ref{assump},  $\inf J_\Phi > -\infty$ and thus  $(J_\Phi)_{\infty}(\z) \geq 0 \; \forall \z \in \R^N \setminus \{\bm{0}\}$~\cite[p. 97]{AuslenderTeboulle}.
   Moreover, from Proposition~\ref{prop:J0_ALS}, we  have that $J_\Phi$ is asymptotically level stable. Hence, all the conditions of~\cite[Corollary  3.4.3]{AuslenderTeboulle} are satisfied which proves that the solution set  of Problem~\eqref{eq:jphi} is nonempty.
\end{proof}

\subsection{Proof of Proposition~\ref{prop:local_min_J0}}\label{appendix:characterization}
We observe that for all $\x\in\mathcal{C}^N$ 
\begin{equation}\label{eq:def_H_sig}
  H(\x):=F_\y(\A\x) +\frac{\lambda_2}{2}\|\x\|^2 
=F_\y(\A_\sigma\x_\sigma) +\frac{\lambda_2}{2}\|\x_\sigma\|^2= H_\sigma(\x_\sigma),
\end{equation}
where $\sigma$ denotes the support of $\x$, $H : \Cc^N \to \R$ and $H_\sigma : \R^{\sharp \sigma} \to \R$.
\begin{enumerate}
        \item[$\Longrightarrow$] Let $\hat{\x} \in \Cc^N$ be a (local) minimizer of~$J_0$  and assume that~$\hat{\x}_{\hat{\sigma}}$ does not solve~\eqref{eq:restriction_j0}. Then, from the convexity of~$F_\y$, for all neighborhoods~$\mathcal{N} \subseteq \Cc^N$ of~$\hat{\x}$, there exists~$\x^* \in \mathcal{N}$ such that
        \begin{equation*}
        \left\lbrace
             \begin{array}{l}
             \sigma^* = \sigma \left( \x^* \right) \subseteq \hat \sigma =\sigma(\hat{\x} ) \\
             H_{\hat \sigma}(\x_{\sigma^*}^*) < H_{\hat \sigma} (\hat{\x}_{\hat \sigma})
            \end{array} \right.\; \Longrightarrow \; 
             \left\lbrace
             \begin{array}{l}
                \|\x^*\|_0 \leq \|\hat{\x}\|_0, \\
            H_\y(\x^*)= H_{\hat \sigma}(\x_{\sigma^*}^*) < H_{\hat \sigma} (\hat{\x}_{\hat \sigma})=H_\y (\hat{\x}).
             \end{array} \right.
        \end{equation*}
        We thus easily observe that~$J_0({\x}^*) < J_0(\hat{\x})$, which contradicts the fact that~$\hat{\x}$ is a (local) minimizer of $J_0$, hence the claim follows.
        \item[$\Longleftarrow$] Let~$\hat{\x}_{\hat{\sigma}} \in \Cc^{\sharp\hat{\sigma}} $ be a solution of problem~\eqref{eq:restriction_j0}, $\lambda_0 > 0$, and define  $ \rho_1: = \min_{\substack{n \in \sigma(\hat{\x})}} |\hat{x}_n|$. Then, there exists an open neighborhood~$\mathcal{N}_1\subset \mathcal{B}(\hat \x, \rho_1)$ of $\hat{\x}$ such that for all $\x \in \mathcal{N}_1$, we have 
    \begin{equation}\label{eq:local_min_sigma}
            \sigma\left(\hat{\x} \right) \subseteq \sigma\left( \x \right).
    \end{equation}      
        Furthermore, since $F_\y$ is continuous at~$\hat{\x}$ and hence $H_\y$, there exists $\rho_2 > 0$ such that for all open neighborhoods $\mathcal{N}_2 \subset \mathcal{B}(\hat \x, \rho_2) $ of $\hat \x$ we have ,\begin{equation}\label{eq:continuity_Fy}
            \left|H_\y \left(\x\right) - H_\y \left(\hat{\x}\right) \right| < \lambda_0, \quad \forall   \x \in \mathcal{N}_2.
        \end{equation}
   Set now $\rho := \min\left\lbrace \rho_1 , \rho_2\right\rbrace$. Then,  there exists an open  neighborhood $\mathcal{N} \subset \mathcal{B}(\hat \x, \rho)$ such that both~\eqref{eq:local_min_sigma} and~\eqref{eq:continuity_Fy} hold. Moreover, as $\hat{\x}_{\hat{\sigma}} \in \Cc^{\sharp\hat{\sigma}} $ solves~\eqref{eq:restriction_j0}, we have that
   \begin{equation}\label{eq:ineq_Fy_rest}
       \forall \x \in \mathcal{N} \cap K_{\hat{\sigma}}, \; H_\y \left(\hat{\x}\right) =  H_{\hat \sigma} \left(\hat{\x}_{\hat{\sigma}}\right) \leq H_{\hat \sigma} \left(\x_{\hat \sigma}\right)=H_\y \left(\x\right)
       ,
   \end{equation}
   where $K_{\hat{\sigma}}=\left\lbrace \x \in \mathbb{R}^N~:~x_n =0~ \forall n \notin \hat \sigma  \right\rbrace $. We can distinguish two cases for all $\x \in \mathcal{N} \cap \Cc^N$:
    \begin{itemize}
        \item If~$\sigma \left( \x\right) = \sigma \left( \hat{\x}\right)$, then we get from~\eqref{eq:ineq_Fy_rest}  that~$ H_\y \left(\hat{\x} \right)+ \lambda_0 \sharp \hat{\sigma} \leq H_\y(\x)+\lambda_0 \sharp \hat{\sigma}$.
        \item  If $\sigma(\x) \supset \sigma(\hat{\x}) = \hat{\sigma}$, then $\|\hat \x\|_0 \leq \|\x\|_0 -1$, 
        which combined with~\eqref{eq:continuity_Fy}  entails
        \begin{align*}
            H_\y(\hat{\x})-\lambda_0 < H_\y(\x)  \Longleftrightarrow & \, H_\y(\hat{\x})+\lambda_0(\|\x\|_0 -1)  < H_\y(\x)+\lambda_0\|\x\|_0  \\
             \Longrightarrow & \,  H_\y(\hat{\x})+\lambda_0 \|\hat \x\|_0 < H_\y(\x)+\lambda_0 \|\x\|_0
             .
             \end{align*}
    \end{itemize}
    Therefore, we have shown that for all $\x \in \mathcal{N} \cap \Cc^N$, $J_0\left(\hat{\x} \right) \leq J_0 \left(
        \x\right)$, as required.
    \end{enumerate}
    
    \subsection{Proof of Lemma~\ref{lemma:0-is-strict-min}}\label{appendox:0strict}

Since $F_\mathbf{y}(\A\cdot)$ is convex at $\mathbf{0}$, then it is locally Lipschitz at $\mathbf{0}$ and thus is calm\footnote{A function $g: \R^N \to \R$ is said to be calm at $\hat{\x}$, if there exists constants ${l} \geq 0$ and ${\varepsilon} > 0$ such that $g(\x) \geq g(\hat{\x})-{l} \|\x-\hat{\x}\| \; \forall \hat{\x} \in \Ball(\hat{\x};{\varepsilon})$ \cite[Chapter 8, Section F]{rockafellar2009variational}. } at $\mathbf{0}$: there exists $l \geq 0$, $r > 0$, and a neighborhood $\mathcal{N} \supset \Ball(\mathbf{0}; r)$ of $\mathbf{0}$ such that
\[
    F_\mathbf{y}(\mathbf{A}\mathbf{v}) \geq F_\mathbf{y}(\mathbf{0}) - l \|\mathbf{v}\|, \quad \forall \mathbf{v} \in \Ball(\mathbf{0}; r) \cap \Cc^N.
\]
Now, let $\rho := \min\left\lbrace r, \frac{\lambda_0}{l+1} \right\rbrace$. Therefore, for all $\mathbf{v} \in \Ball(\mathbf{0}; \rho) \cap \Cc^N \setminus \{\mathbf{0}\}$, we have  $\lambda_0 \|\mathbf{v}\|_0 \geq \lambda_0 > 0$. Since $\v \neq \mathbf{0}$ and $\lambda_0 - l\|\mathbf{v}\| > \|\mathbf{v}\| > 0$, we have:
\begin{align*}
    J_0(\mathbf{v}) &= F_\mathbf{y}(\A\mathbf{v}) + \lambda_0 \|\mathbf{v}\|_0 + \frac{\lambda_2}{2} \|\v\|^2 \\
    & \geq F_\mathbf{y}(\mathbf{0}) - l\|\mathbf{v}\| + \lambda_0  + \frac{\lambda_2}{2} \|\v\|^2 \\
    & > F_\mathbf{y}(\mathbf{0}) = J_0(\mathbf{0}),
\end{align*}
as required.

\subsection{Proof of Theorem~\ref{thr:strict-minimizer}}\label{appendix:strict-min}
 The proof of Theorem~\ref{thr:strict-minimizer} is a direct consequence of Lemma~\ref{lem:strict-min} below which extends \cite[Theorem 3.2 (i) and (ii)]{nikolova2013description} to problems of the form~\eqref{eq:problem_setting}. Indeed, the objective function of subproblem~\eqref{eq:restriction_j0} is always strictly convex and coercive when $\lambda_2>0$ and thus admits a unique global minimizer. When $\lambda_2=0$ we get the strict convexity and coercivity of the objective function of subproblem~\eqref{eq:restriction_j0} only if $\operatorname{rank}(\A_{\hat{\sigma}})=\sharp \hat{\sigma}$ under Assumption~\ref{assump}.
 
\begin{lemma}\label{lem:strict-min}
     Let $\hat{\x}$ be a (local) minimizer of $J_0$. Define $\hat{\sigma}=\sigma(\hat{\x})$. Then $\hat{\x}$ is a strict local minimizer of $J_0$ if and only if $\hat{\x}_{\hat{\sigma}}$ is the unique solution of the subproblem~\eqref{eq:restriction_j0}.
\end{lemma}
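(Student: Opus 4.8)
The plan is to reduce everything to the convex subproblem~\eqref{eq:restriction_j0} via Proposition~\ref{prop:local_min_J0}, and then to re-use, almost verbatim, the two-case neighborhood analysis carried out in Appendix~\ref{appendix:characterization}, upgrading the relevant inequalities from non-strict to strict. Throughout I keep the notation $H_{\hat\sigma}(\z)=F_\y(\A_{\hat\sigma}\z)+\frac{\lambda_2}{2}\|\z\|^2$ from~\eqref{eq:def_H_sig}, noting that $H_{\hat\sigma}$ is convex. I prove the two implications separately.

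\emph{Backward direction ($\Leftarrow$).} First I would assume $\hat\x_{\hat\sigma}$ is the unique minimizer of $H_{\hat\sigma}$ over $\Cc^{\sharp\hat\sigma}$ and reproduce the sufficiency argument of Proposition~\ref{prop:local_min_J0}. Choosing $\rho=\min\{\rho_1,\rho_2\}$ exactly as there, so that both the support inclusion~\eqref{eq:local_min_sigma} and the continuity estimate~\eqref{eq:continuity_Fy} hold on $\mathcal N\subset\Ball(\hat\x,\rho)$, I split any $\x\in(\mathcal N\cap\Cc^N)\setminus\{\hat\x\}$ into two cases. If $\sigma(\x)=\hat\sigma$, then $\x_{\hat\sigma}\neq\hat\x_{\hat\sigma}$ is feasible for the subproblem, and now uniqueness yields the \emph{strict} inequality $H_{\hat\sigma}(\hat\x_{\hat\sigma})<H_{\hat\sigma}(\x_{\hat\sigma})$, hence $J_0(\hat\x)<J_0(\x)$ after adding the common term $\lambda_0\sharp\hat\sigma$. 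If $\sigma(\x)\supsetneq\hat\sigma$, then $\|\hat\x\|_0\le\|\x\|_0-1$ and the strict bound $|H_\y(\x)-H_\y(\hat\x)|<\lambda_0$ from~\eqref{eq:continuity_Fy} already gives $J_0(\hat\x)<J_0(\x)$, as in the original computation. Since the inequality is strict in both cases, $\hat\x$ is a strict local minimizer in the sense of Definition~\ref{def:strict-minimizer}.

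\emph{Forward direction ($\Rightarrow$).} I would argue by contraposition, showing that if $\hat\x_{\hat\sigma}$ is not the unique solution of~\eqref{eq:restriction_j0} then $\hat\x$ is not strict. Let $\z^*\neq\hat\x_{\hat\sigma}$ be another minimizer of $H_{\hat\sigma}$. Because $H_{\hat\sigma}$ is convex and $\Cc^{\sharp\hat\sigma}$ is convex, the solution set is convex, so $\z_t:=(1-t)\hat\x_{\hat\sigma}+t\z^*$ minimizes $H_{\hat\sigma}$ for every $t\in[0,1]$. As $t\to0^+$ we have $\z_t\to\hat\x_{\hat\sigma}$, and since every coordinate of $\hat\x_{\hat\sigma}$ is nonzero (and strictly positive when $\Cc=\R_{\ge 0}$), continuity guarantees that for all sufficiently small $t>0$ each coordinate of $\z_t$ is still nonzero; equivalently, the vector $\x_t\in\Cc^N$ obtained by placing $\z_t$ on $\hat\sigma$ and $0$ elsewhere satisfies $\sigma(\x_t)=\hat\sigma$. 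Then $J_0(\x_t)=H_{\hat\sigma}(\z_t)+\lambda_0\sharp\hat\sigma=H_{\hat\sigma}(\hat\x_{\hat\sigma})+\lambda_0\sharp\hat\sigma=J_0(\hat\x)$ with $\x_t\neq\hat\x$ and $\x_t\to\hat\x$, so every punctured neighborhood of $\hat\x$ contains a point where $J_0$ fails to be strictly larger, contradicting strictness.

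\emph{Main obstacle.} The delicate point is the forward direction, where I must manufacture competitors arbitrarily close to $\hat\x$ realizing the same value of $J_0$. This needs two ingredients that I would emphasize: convexity of the solution set of~\eqref{eq:restriction_j0} (to slide the alternative minimizer toward $\hat\x_{\hat\sigma}$ while staying optimal and feasible in $\Cc^{\sharp\hat\sigma}$), and preservation of the support $\hat\sigma$ along the segment (so that the $\ell_0$ term is unchanged and $J_0(\x_t)$ stays equal to $J_0(\hat\x)$). The support claim is exactly where the definition of $\hat\sigma$ enters, since all coordinates of $\hat\x_{\hat\sigma}$ are nonzero; everything else is a transcription of the estimates already established in Appendix~\ref{appendix:characterization}.
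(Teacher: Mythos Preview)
Your proof is correct and follows essentially the same route as the paper: both directions rely on the convexity of the solution set of~\eqref{eq:restriction_j0} (to produce nearby competitors with identical $J_0$ value in the forward direction) and on the two-case split ``support preserved vs.\ support enlarged'' for the backward direction. The only cosmetic difference is that, in the off-support case of the backward direction, the paper invokes calmness of $H_\y$ (a one-sided Lipschitz bound) to control the data-term variation, whereas you reuse the continuity estimate~\eqref{eq:continuity_Fy} from Appendix~\ref{appendix:characterization}; both arguments deliver the needed strict inequality, so this is not a genuine deviation.
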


\begin{proof}
Let us define 
\begin{equation}\label{eq:kspace}
    {K}_{\hat{\sigma}} := \lbrace \z \in \Cc^N \; ; \: z_n=0, \; \forall n \in \hat{\sigma}^c\rbrace
    .
\end{equation}
Recalling Lemma 1.2(i) in~\cite{nikolova2013description}, for $\hat{\x} \in \R^N \setminus \{\mathbf{0}\}$, by setting $\rho:=\min_{\substack{n \in \hat{\sigma}}} |\hat{x}_n|$, we have
\begin{equation}\label{eq:lemma1.2-nicolas}
    \|\hat{\x}+\z\|_0=\sum_{n \in \hat{\sigma}}|\hat{x}_n|_0 + \sum_{n \in \sigma^c} |z_n|_0, \quad \forall \z \in \Binf(\mathbf{0};\rho)
    .
\end{equation}

\noindent We proceed by proving both implications
\begin{enumerate}
    \item[$\Longrightarrow$] Let $\hat{\mathbf{x}} \neq \mathbf{0}$ be a strict (local) minimizer of $J_0$. Suppose that $\hat{\mathbf{x}}_{\hat \sigma}$ is not the unique solution of~\eqref{eq:restriction_j0}. Hence we can find $\z$ such that
 \begin{equation}
     \left\lbrace \begin{array}{l}
        \z \in \Binf(\mathbf{0};\rho) \cap {K}_{\hat{\sigma}} \\
        H_{\hat \sigma}(\hat{\mathbf{x}}_{\hat \sigma} + \z_{\hat{\sigma}}) = H_{\hat \sigma}(\hat{\mathbf{x}}_{\hat \sigma})
     \end{array}\right.
 \end{equation}
 where $H_{\hat \sigma}$ is defined as in~\eqref{eq:def_H_sig}.
 It follows from~\eqref{eq:lemma1.2-nicolas} and the fact that (by definition of $\z$) $\mathbf{A}\mathbf{z} = \mathbf{A}_{\hat{\sigma}} \mathbf{z}_{\hat{\sigma}}$
 \begin{align*}
  J_0(\hat{\mathbf{x}}+\mathbf{z}) &= H_{\hat \sigma}\left( \hat{\mathbf{x}}_{\hat{\sigma}}+\mathbf{z}_{\hat{\sigma}}\right) + \lambda_0 \|\hat{\x}+\z\|_0 \\
&= H_{\hat \sigma}( \hat{\mathbf{x}}_{\hat{\sigma}}) + \lambda_0 \sum_{n \in \hat{\sigma}}|\hat{x}_n|_0 + \lambda_0 \sum_{n \in \sigma^c} |z_n|_0 \\
&= F_\mathbf{y}(\mathbf{A}\hat{\mathbf{x}}) + \frac{\lambda_2}{2} \|\hat{\x}\|^2+ \lambda_0 \sum_{n \in \hat{\sigma}}|\hat{x}_n|_0 = J_0(\hat{\mathbf{x}}),
\end{align*}
which contradicts the fact that $\hat{\x}$ is a strict local minimizer of $J_0$. Hence $\hat{\mathbf{x}}_{\hat \sigma}$ is  the unique solution of~\eqref{eq:restriction_j0}.

\item[$\Longleftarrow$]  Let $\hat{\x}$ be a (local) minimizer of $J_0$ such that  $\hat{\mathbf{x}}_{\hat \sigma}$ is  the unique solution of~\eqref{eq:restriction_j0}.  Hence,  for all $\z \in {K}_{\hat{\sigma}}$
\begin{equation}\label{eq:proof_strict_min_recip-1}
H_\y(\hat{\x}+\z)=H_{\hat \sigma}(\hat{\x}_{\hat{\sigma}}+\z_{\hat{\sigma}}) >  H_{\hat \sigma}(\hat{\x}_{\hat{\sigma}})=H_\y(\hat{\x}),
\end{equation}
where again $H_\y$ and  $H_{\hat {\sigma}}$ are defined as in~\eqref{eq:def_H_sig}. Now, defining  $\rho := \min_{\substack{n \in \hat{\sigma}}} |\hat{x}_n|$, we get from~\eqref{eq:lemma1.2-nicolas} and~\eqref{eq:proof_strict_min_recip-1} that for all  $\z \in ({K}_{\hat{\sigma}} \cap \Binf(\mathbf{0};\rho) )\setminus \{\bm{0}\} $
\begin{align*}
  J_0(\hat{\mathbf{x}}+\mathbf{z}) &= H_\y\left( \hat{\mathbf{x}}+\mathbf{z}\right) + \lambda_0 \|\hat{\x}+\z\|_0 \\
&>H_{\y}( \hat{\mathbf{x}}) + \lambda_0 \sum_{n \in \hat{\sigma}}|\hat{x}_n|_0 + \lambda_0 \sum_{n \in \sigma^c} |z_n|_0 \\
&= F_\mathbf{y}(\mathbf{A}\hat{\mathbf{x}}) + \frac{\lambda_2}{2} \|\hat{\x}\|^2+ \lambda_0 \sum_{n \in \hat{\sigma}}|\hat{x}_n|_0 = J_0(\hat{\mathbf{x}}),
\end{align*}
On the other hand, for $\z \notin {K}_{\hat{\sigma}}$, we use the fact that  $H_\y(\cdot)$ is convex and hence calm at $\hat{\x}$. This implies that it exists 
 $r>0$ and $l \geq 0$ such that 
\begin{equation}
   H_\y(\hat{\x}+\z) \geq H_\y(\hat{\x})-l\|\z\|, \quad \forall \z \in \Binf(\mathbf{0};r) \cap \Cc^N 
\end{equation}
Defining $\tilde \rho:=\min \lbrace \rho, r, \frac{\lambda_0}{l+1} \rbrace$. Using~\eqref{eq:lemma1.2-nicolas} and noticing that $\sum_{n \in \sigma^c} |z_n|_0 \geq 1$ for $\z \notin {K}_{\hat{\sigma}}$, we get that for all $\z \in \Binf(\mathbf{0};\tilde \rho) \cap \Cc^N \setminus {K}_{\hat{\sigma}} $
\begin{align*}
   J_0(\hat{\x}+\z)&=H_\y(\hat{\x}+\z)+\lambda_0 \sum_{n \in \hat{\sigma}}|\hat{x}_n|_0 + \lambda_0 \sum_{n \in \sigma^c} |z_n|_0\\
   & \geq H_\y(\hat{\x})-l\|\z\| +\lambda_0 \|\hat{\x}\|_0+\lambda_0 \sum_{n \in \sigma^c} |z_n|_0 \\
   & =J_0(\hat{\x})-l\|\z\|+\lambda_0  >J_0(\hat{\x}).
\end{align*}
We deduce from all these derivations that $J_0(\hat{\x}+\z) > J(\hat{\x}), \; \forall \z \in \Binf(\mathbf{0};\tilde \rho) \cap \Cc^N$, hence $\hat{\x}$ is a strict minimizer. 
\end{enumerate}

\end{proof}

\subsection{Proof of Theorem~\ref{th:glob_min_J0_strict}}\label{sec:proof_glob_min_J0_strict}

   Let $\hat{\x} \in \Cc^N$ be a global minimizer of $J_0$. Define $\hat{\sigma}=\sigma(\hat{\x})$. If $\hat{\x}=\mathbf{0}$, then $\hat{\x}$ is strict minimizer by Lemma~\ref{lemma:0-is-strict-min}. Now, let $\hat{\x} \neq \mathbf{0}$. We suppose that $\hat{\x}$ is a non-strict minimizer. Therefore, Theorem~\ref{thr:strict-minimizer} fails, meaning that $\lambda_2=0$ and $\dim \ker \A_{\hat{\sigma}} \geq 1$.
    Let us take  $\z \in \R^\N$  such that $\z_{\hat{\sigma}} \in \ker \A_{\hat{\sigma}}$,   $\z_{\hat{\sigma}^c} = \bm{0}$, and $[\z_{\hat{\sigma}}]_k >0$ for some $k \in [\sharp \hat{ \sigma}]$.
    Then, there exists $\eta >0$ such that 
    \begin{equation}\label{eq:proof_min_glob_strict-1}
        \Bar{\x}=\hat{\x}-\eta \z \in \Cc^N \; \text{and } \|\Bar{\x}\|_0 \leq \|\hat{\x}\|_0-1 .
    \end{equation}

To prove the existence of such an $\eta$, we distinguish two cases:
\begin{enumerate}
    \item If $\Cc = \R^N$, then taking $\eta = [\hat{\x}_{\hat{\sigma}}]_k / [\hat{\z}_{\hat{\sigma}}]_k >0$ is a valide choice.
    \item If $\Cc = \R_{\geq 0}^N$, then we define the point ${\x}_{\mathrm{ext}} \in \R^{\sharp \hat \sigma}_{\geq 0}$ as
\begin{equation}
    {\x}_{\mathrm{ext}} = \hat{\x}_{\hat{\sigma}} -  \eta_1 \z_{\hat \sigma},
\end{equation}
with $\eta_1 >[\hat{\x}_{\hat{\sigma}}]_k / [\hat{\z}_{\hat{\sigma}}]_k >0$.
Hence we have that ${\x}_{\mathrm{ext}} \in E:= \mathrm{ext}(\R_{\geq 0}^{\sharp \hat{\sigma}})$, the exterior of $\R_{\geq 0}^{\sharp \hat{\sigma}}$. Moreover, (by definition) $\hat{\x}_{\hat{\sigma}} \in I:= \mathrm{int}(\R_{\geq 0}^{\sharp \hat{\sigma}})$, the interior of $\R_{\geq 0}^{\sharp \hat{\sigma}}$. Then, there exists $t^* \in (0,1)$ such that $\mu(t^*) \in B := \partial (\R_{\geq 0}^{\sharp \hat{\sigma}})$ (the boundary of $\R_{\geq 0}^{\sharp \hat{\sigma}}$) with $\mu$ the path defined by
\begin{equation}
    \mu(t) = t \hat{\x}_{\hat{\sigma}} + (1-t) \bar {\x}_{\mathrm{ext}} .
\end{equation}
This is due to the fact that $\{I, B, E\}$ forms a partition of $\R^{\sharp \hat{ \sigma}}$. Injecting the expression of ${\x}_{\mathrm{ext}}$ in the last equation, we have
\begin{equation}
     \mu(t^*) =  \hat{\x}_{\hat{\sigma}} - \eta_1(1-t^*) \z_{\hat{\sigma}} \;\text{ with }\;  \|\mu(t^*)\|_0 < \|\hat{\x}_{\hat{\sigma}} \|_0 = \sharp \hat{\sigma},
\end{equation}
 This shows that $\eta =\eta_1(1-t^*) >0$ is a valid choice for~\eqref{eq:proof_min_glob_strict-1}.
\end{enumerate}

     Finally, from  the fact that $\eta \z_{\hat{\sigma}} \in \ker \A_{\hat{\sigma}}$, we have $\A\hat{\x}=\A_{\hat{\sigma}}\hat{\x}_{\hat{\sigma}}=\A_{\Bar{\sigma}}\Bar{\x}_{\Bar{\sigma}}=\A\Bar{\x}$.
     Therefore, $F_\y(\A\hat{\x})=F_\y(\A\Bar{\x})$ and we get 
     \begin{align*}
         J_0(\Bar{\x})&=F_\y(\A\Bar{\x})+\lambda_0 \|\bar \x\|_0\\
         & \leq F_\y(\A\hat{\x})+\lambda_0( \|\hat \x\|_0 -1 ) < J_0(\hat{\x})
         ,
     \end{align*}
     which contradicts the fact $\hat{\x}$ is a global minimizer of $J_0$. Hence $\hat{\x}$ is a strict minimizer of $J_0$.

\section{\BR{} penalty and exact relaxation results}   
\subsection{Proof of Proposition~\ref{prop:Breg_l0}}\label{appendix:structure_bcel0}
The separability of $B_\Psi$ comes directly from the separability of both the Bregman distance $D_\Psi$ and the $\ell_0$ pseudo-norm.

 We thus focus on the proof of~\eqref{eq:prop1_equa2}  (1-dimensional functional). 
For $z\in\Cc$, we have that $\alpha -  d_{\psi_n} (\cdot,z) \leq  \lambda_0 | \cdot|_0 $ if and only if $ \alpha \leq   d_{\psi_n}(\cdot, z) + \lambda_0 | \cdot |_0 $. As such,  the supremum with respect to $\alpha$ for $z\in \Cc$ in~\eqref{eq:prop:def_bergman1} is given by,
\begin{equation}\label{eq:proof_l0_Breg-1}
    \alpha= \inf_{x \in \Cc} \; \lambda_0 |x|_0 +  d_{\psi_n}(x,z).
\end{equation}
Since $\psi_n$ is strictly convex (and so is $d_{\psi_n}(\cdot,z)$ for all $z\in\mathcal{C}$), we have that for all $z\in \Cc$, $x \mapsto \lambda_0 |x|_0 +  d_{\psi_n}(x,z)$ 
admits two local minimizers at $x=0$ with value $d_{\psi_n}(0,z)$ and $x=z$  with value $\lambda_0$ (property of Bregman divergences). Combining with~\eqref{eq:proof_l0_Breg-1}, we get 
\begin{equation}\label{eq:proof_prop1_eq2}
    \alpha =  \min\left(\lambda_0,  d_{\psi_n}(0,z) \right) =
    \left\lbrace
    \begin{array}{ll}
           d_{\psi_n}(0,z) & \text{ if } z \in [\alpha_n^-,\alpha_n^+],\\
         \lambda_0 & \text{ otherwise}
    \end{array}\right.
    ,
\end{equation}
where the interval $[\alpha_n^-,\alpha_n^+] \ni 0$ defines the $\lambda_0$-sublevel set of $z \mapsto d_{\psi_n}(0,z) =  \psi_n'(z)z - \psi_n(z) + \psi_n(0)$. Note that such a bounded interval exists thanks to the assumption that $z \mapsto \psi_n'(z)z - \psi_n(z)$ is coercive in Definition~\ref{def:Breg_L0}.
Injecting this optimal value for $\alpha$ in~\eqref{eq:prop:def_bergman1}, it remains to compute the supremum with respect to $z$
\begin{equation}\label{eq:proof_A3}
    \beta_{\psi_n}(x) = \sup_{z \in \Cc} \; \min\left(\lambda_0,  d_{\psi_n}(0,z) \right) -  d_{\psi_n}(x,z).
\end{equation}

To derive its analytical expression, we distinguish the following cases:\\
\underline{If $x \in [\alpha_n^-, \alpha_n^+]$}, we compute the sup on $\Cc \setminus [\alpha_n^-, \alpha_n^+]$ and $[\alpha_n^-, \alpha_n^+]$, and then combine the results.
\begin{itemize}
   \item 
  When \( z \in \Cc \setminus [\alpha_n^-, \alpha_n^+] \), we have \(\min\left(\lambda_0, d_{\psi_n}(0,z)\right) = \lambda_0\). Therefore, 
\[
\sup_{z \in \Cc \setminus [\alpha_n^-, \alpha_n^+]} \left(\lambda_0 - d_{\psi_n}(x,z)\right) = \begin{cases}
    \lambda_0 - d_{\psi_n}(x, \alpha_n^-),\; \text{if } x \in [\alpha_n^-, 0],\\
    \lambda_0 - d_{\psi_n}(x, \alpha_n^+), \; \text{if } x \in [0, \alpha_n^+]
\end{cases}
\]
The reason is that, for $x \in [0, \alpha_n^+]$, the function \( g(z) :=\lambda_0 -d_{\psi_n}(x,z) =\lambda_0 -\psi_n(x) + \psi_n(z) + \psi_n'(z)(x - z) \) is non-increasing for all \( z \geq \alpha_n^+ \). Indeed, \( g'(z) = (x - z)\psi_n''(z) \leq 0\), as \(\psi_n\) is convex and \( x < z \)). Hence, the supremum of \( g \) is attained at \( z = \alpha_n^+ \), and  similarly,  at \( z = \alpha_n^- \) for $x \in [\alpha_n^-, 0]$.
\item When $z \in [\alpha_n^-, \alpha_n^+]$, we have \(\min\left(\lambda_0, d_{\psi_n}(0,z)\right) = d_{\psi_n}(0,z)\).
Hence,
   \begin{align*}
        &  \psi_n(0) - \psi_n(x)  +  \sup\limits_{\substack{{z \in [\alpha_n^-,\alpha_n^+]}}}\; - \psi_n'(z) (0-z) + \psi_n'(z)(x-z) \\
         = \;  & \psi_n(0) - \psi_n(x)  + \sup\limits_{\substack{{z \in [\alpha_n^-,\alpha_n^+]}}}\;  \psi_n'(z)x.
    \end{align*}
    Since $\psi_n$ is strictly convex, $\psi_n'$ is increasing and the last sup is thus attained at 
    \begin{equation}
        z \in \left\lbrace
        \begin{array}{ll}
         \{ \alpha_n^- \}   & \text{ if } x <0   \\
           \{ \alpha_n^+ \}  & \text{ if } x > 0 \\
           \left[\alpha_n^-,\alpha_n^+\right] & \text{ if } x = 0
        \end{array}\right. \; \text{with the value } \;
         \left\lbrace
        \begin{array}{ll}
        \psi_n(0) - \psi_n(x) + \psi_n'(\alpha_n^-) x  & \text{ if } x \leq 0   \\
          \psi_n(0) - \psi_n(x) + \psi_n'( \alpha_n^+)x  & \text{ if } x \geq 0
        \end{array}\right. .
    \end{equation}
\end{itemize}
From the definition of $\alpha_n^\pm$, we have that $d_{\psi_n}(0,\alpha_n^\pm) = \lambda_0$, therefore $\lambda_0 - d_{\psi_n}(x, \alpha_n^\pm) = d_{\psi_n}(0,\alpha_n^\pm) - d_{\psi_n}(x, \alpha_n^\pm) = \psi_n(0)-\psi_n(x) + \psi_n'(\alpha_n^\pm) x$. As such, the two partial sup computed in the above two bullets have the same expression and we get that
\begin{align}
\forall x \in [\alpha_n^-, \alpha_n^+], \quad \beta_{\psi_n}(x) = \psi_n(0) - \psi_n(x) + \psi_n'(\alpha_n^\pm) x. \label{eq:case1-of-x}
\end{align}
\underline{If $x \in \Cc \setminus [\alpha_n^-, \alpha_n^+]$}, we proceed similarly,
\begin{itemize}
    \item When $z \in [\alpha_n^-, \alpha_n^+]$, the computations are similar as previous case of $x$.
    \item  When $z \in \Cc \setminus [\alpha_n^-, \alpha_n^+]$, we have $$\sup\limits_{\substack{z \in \Cc \setminus [\alpha_n^-, \alpha_n^+]}} \; \lambda_0 -  d_{\psi_n}(x,z) = \lambda_0 + \inf\limits_{\substack{z \in \Cc \setminus [\alpha_n^-, \alpha_n^+]}} \;  d_{\psi_n}(x,z) = \lambda_0.$$ Indeed by definition of $d_{\psi_n}$, the supremum is attained for $z=x$.
    
\end{itemize}
Combining these two partial sup we have
\begin{align}
\forall x  \in \Cc \setminus [\alpha_n^-, \alpha_n^+], \quad \beta_{\psi_n}(x) &= \max \left\{ \psi_n(0) - \psi_n(x) + \psi_n'(\alpha_n^\pm) x, \lambda_0\right\} \notag \\
&=  \lambda_0 \label{eq:case2-of-x}
\end{align}
due to the fact that  $\psi_n(0) - \psi_n(x) + \psi_n'(\alpha_n^\pm) x = \lambda_0 - d_{\psi_n}(x, \alpha_n^\pm) \leq \lambda_0$ for all  $x  \in \Cc$.
Finally, the proof is completed through the combination of~\eqref{eq:case1-of-x} and~\eqref{eq:case2-of-x}.

\subsection{Proof of Proposition~\ref{prop:Bregman-composition-of-S}}\label{appendix:Bregman-composition-of-S}
The fact that $S_\Psi$ takes values in $\R_{\leq 0}$ easily follows from equation~\eqref{eq:definition:s_transfrom} and the fact that $\forall (\x,\z), \; D_\Psi(\x,\z) \geq0$, which also shows that $S_\Psi$ never attains ~$-\infty$. Using equation~\eqref{eq:definition:s_transfrom}, we can see that $S_\Psi$ is the sum of a l.s.c. function and a continuous function. Therefore, the continuity statements can be derived from the standard properties of l.s.c. convex functional (see, e.g., \cite[Proposition 3.2]{Carlsson2019}). 

 The proof of~\eqref{eq:B_Psi_composition} follows~\cite[Proposition 3.1]{Carlsson2019}. First of all, we have that $\alpha - D_{\Psi}(\cdot,\z) \leq \lambda_0 \|\cdot\|_0$ if and only if  $\alpha \leq \lambda_0 \|\cdot\|_0 +  D_{\Psi}(\cdot,\z)$. As such, the maximal $\alpha$ for a fixed $\z$ is given by
\begin{equation}
    \alpha = - S_{\Psi}(\z).
\end{equation}
It then follows from~\eqref{eq:prop:def_bergman1} that 
\begin{align}
     B_\Psi(\x)= \sup_{\z \in \Cc^N } \; - S_{\Psi}(\z) - D_{\Psi}(\x,\z) =  S_{\Psi} \circ S_{\Psi} (\x) 
\end{align}
which also shows that $B_\Psi$ is lower semi-continuous and takes values in $\R_{\geq 0}$.

\subsection{Clarke subdifferential of \BR{}}\label{appendix:formula_clark}
We start this section by recalling the definition of Clarke's subdifferential, which is a generalization of subdifferential for non-smooth and non-convex functions.
\begin{definition}[Generalized gradient~\cite{Frank99}] \label{def:def_GG} Let $f: \mathbb{R}^N\to \mathbb{R} $ be a locally Lipschtiz function and let $\x \in \mathbb{R}^N$. Then Clarke’s generalized gradient at $\x$, denoted by $\partial f(\x)$ is defined by :
\begin{equation}\label{eq:def_GG}
  \partial f(\mathbf{x}) = \left\lbrace \bm{\xi} \in \mathbb{R}^N \,:\, f^\circ(\mathbf{x}, \mathbf{v}) \geq \langle \mathbf{v}, \bm{\xi} \rangle,~ \forall \mathbf{v} \in \mathbb{R}^N \right\rbrace,
\end{equation}
where $f^\circ(\x,\v)$ stands for the Clarke's generalized directional derivative of f at $\x$ in the direction $\v \in \mathbb{R}^N$, that is
\begin{equation}
  f^\circ (\x,\v)= \limsup _{\substack{\y \rightarrow \x  \\
\eta \downarrow 0}} \frac{f(\y+\eta \v)-f(\y)}{\eta}
.
\end{equation}
\end{definition}

The following Lemma shows that the 1D relaxations $\beta_{\psi_n}$ are locally Lipschitz continuous functions, thus they admit a Clarke's subdifferential.

\begin{lemma}\label{lemma:B-locally-lip}
Let $n \in [N]$. For all $x \in \Cc$, there exists a neighborhood $\mathcal{N}$ of $x$, such that $\beta_{\psi_n}$ is Lipschitz in $\mathcal{N}$. 
\end{lemma}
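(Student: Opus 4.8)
The plan is to exploit the explicit piecewise form of $\beta_{\psi_n}$ given in Proposition~\ref{prop:Breg_l0}: it equals the constant $\lambda_0$ on $\Cc \setminus [\alpha_n^-,\alpha_n^+]$, equals $\psi_n(0)-\psi_n(x)+\psi_n'(\alpha_n^-)x$ on $[\alpha_n^-,0]$, and equals $\psi_n(0)-\psi_n(x)+\psi_n'(\alpha_n^+)x$ on $[0,\alpha_n^+]$. On the interior of each of these finitely many pieces $\beta_{\psi_n}$ is $C^1$, since $\psi_n$ is twice (hence once continuously) differentiable on $\mathrm{int}(\Cc)$ and the remaining terms are affine or constant. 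Consequently $\beta_{\psi_n}'$ is continuous on each open piece, hence bounded on every compact subinterval, and the mean value theorem yields local Lipschitzness at every $x$ lying in the interior of a piece: one simply takes $\mathcal{N}$ to be a small interval contained in that piece.

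It then remains to treat the finitely many junction points $\alpha_n^-$, $0$ and $\alpha_n^+$. For these I would use a standard gluing argument: if a continuous function is $C^1$ with one-sided derivatives that stay bounded on either side of an interior point, then applying the mean value theorem separately on the left and on the right and using continuity at the point shows it is Lipschitz on a whole neighborhood, with constant the larger of the two one-sided suprema of $|\beta_{\psi_n}'|$. Two ingredients are needed. First, continuity of $\beta_{\psi_n}$ at the junctions: at $\alpha_n^\pm$ this holds because, by the very definition of $\alpha_n^\pm$ as the endpoints of the $\lambda_0$-sublevel set of $d_{\psi_n}(0,\cdot)$, one has $\psi_n(0)-\psi_n(\alpha_n^\pm)+\psi_n'(\alpha_n^\pm)\alpha_n^\pm = d_{\psi_n}(0,\alpha_n^\pm)=\lambda_0$, so the inner expression matches the outer constant; continuity across $0$ is immediate, and global continuity of $\beta_{\psi_n}$ is in any case guaranteed by Proposition~\ref{prop:Bregman-composition-of-S}. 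Second, boundedness of the one-sided derivatives: at $\alpha_n^\pm$ the derivative $-\psi_n'(x)+\psi_n'(\alpha_n^\pm)$ tends to $0$, so $\beta_{\psi_n}$ is in fact $C^1$ there; and when $0 \in \mathrm{int}(\Cc)$ (the case $\Cc=\R$) the one-sided slopes at $0$ equal the finite quantities $\ell_n^\pm$ appearing in~\eqref{eq:CP_beta}, so the gluing argument applies.

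The main obstacle is the junction at $x=0$ in the non-negative case $\Cc=\R_{\geq 0}$, where $0$ sits on the boundary of $\Cc$ and $\psi_n$ is only assumed differentiable on the open interior. Here $\alpha_n^-=0$, the only relevant side is $[0,\alpha_n^+]$, on which $\beta_{\psi_n}'(x)=-\psi_n'(x)+\psi_n'(\alpha_n^+)$, and local Lipschitzness on the relative neighborhood $[0,\delta)$ hinges on the finiteness of $\lim_{x\to 0^+}\psi_n'(x)$; I would verify this boundedness case by case for the generating functions of interest. As a cleaner route that avoids differentiating on the curved pieces altogether, I would instead invoke the concavity of $\beta_{\psi_n}$ on $(\alpha_n^-,0)$ and $(0,\alpha_n^+)$ (noted in the geometric discussion and inherited from the convexity of $D_\Psi$ in its first argument): a finite concave function is automatically locally Lipschitz on the interior of its domain, which disposes of all non-junction points at once and reduces the entire lemma to checking continuity together with finite one-sided slopes at the three points $\alpha_n^-$, $0$ and $\alpha_n^+$.
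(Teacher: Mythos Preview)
Your approach is correct and in fact more careful at the junction points than the paper's. The paper takes a shorter route on each piece: rather than invoking $C^1$-ness and the mean value theorem, it simply uses that $\psi_n$ is convex, hence locally Lipschitz on the interior of its domain, so for $x\in(0,\alpha_n^+)$ one has directly
\[
|\beta_{\psi_n}(x)-\beta_{\psi_n}(x')|=\bigl|-\psi_n(x)+\psi_n(x')+\psi_n'(\alpha_n^+)(x-x')\bigr|\le (L+|\psi_n'(\alpha_n^+)|)\,|x-x'|,
\]
where $L$ is a local Lipschitz constant of $\psi_n$. Your ``cleaner route'' via concavity is essentially this same observation. The paper's argument is shorter because it does not separate differentiability from convexity, but it is also less explicit than yours at the junctions: it covers $x\in\Cc\setminus(\alpha_n^-,\alpha_n^+)$ (constant piece), $x\in(0,\alpha_n^+)$, and $x\in(\alpha_n^-,0)$, without spelling out $x=0$ or the overlap of neighborhoods at $\alpha_n^\pm$. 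Your explicit gluing argument fills those in.

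Your diagnosis of the boundary obstacle at $x=0$ when $\Cc=\R_{\geq 0}$ is correct and is a point the paper's proof also does not address. For the Shannon entropy $\psi_n(x)=\gamma_n(x\log x-x+1)$ one computes $\beta_{\psi_n}'(x)=\gamma_n\log\bigl(\lambda_0/(\gamma_n x)\bigr)\to+\infty$ as $x\to 0^+$, so $\beta_{\psi_n}$ is \emph{not} Lipschitz on any relative neighborhood $[0,\delta)$ and the lemma as stated fails there. This does not break the downstream use in the paper (critical points on $\R_{\geq 0}$ are handled via the normal cone, with $\ell_n^-=-\infty$), but your instinct to flag it and check the generating functions case by case is the right one.
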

\begin{proof}
Let $n\in [N]$. For all $x \in \Cc \backslash (\alpha_n^-,\alpha_n^+)$ $\beta_{\psi_n}$ is constant, thus the proof is trivial. Now, let $x \in (0,\alpha_n^+)$ and let $\mathcal{N}_1$ be a neighborhood of $x$.
 Since $\psi_n$ is convex, it is locally Lipschitz at $x$, hence there exists $L>0$ such that $\psi_n$ is  $L$-Lipschitz on a neighborhood $\mathcal{N}_2$ of $x$.
 Thus for all $x' \in \mathcal{N}_1 \cap \mathcal{N}_2$, we have 
\begin{align*}
    \left|\beta_{\psi_n}(x) - \beta_{\psi_n}(x')\right| &= \left|\psi_n(0) - \psi_n(x) + \psi_n'(\alpha_n^+)x - \psi_n(0) + \psi_n(x') - \psi_n'(\alpha_n^+)x'\right|\\
    &= \left|- \psi_n(x) + \psi_n(x') + \psi_n'(\alpha_n^+)(x - x')\right|\\
    &\leq (L+|\psi_n'(\alpha_n^+)|)|x-x'|
\end{align*}
which proves that $\beta_{\psi_n}$  is locally Lipschitz at $x$. Proceeding similarly for $x\in (\alpha_n^-,0)$ completes the proof.
\end{proof}


From Definition~\ref{def:def_GG}, we can thus compute~$\partial \beta_{\psi_n}$, where~$\beta_{\psi_n}$ is the 1D functional defined  in~\eqref{eq:prop1_equa2}.
Let us consider the case $x \neq 0$. Based on~\cite[Corollary to Proposition 2.2.4]{Frank99}, when $\beta_{\psi_n}$ is continuously differentiable on a neighborhood of $x$, $\partial \beta_{\psi_n}(x)$ reduces to 
$\partial \beta_{\psi_n}(x) = \left\lbrace \beta_{\psi_n}'(x) \right\rbrace$.
 
Let now consider the case $x=0$. We first compute the generalized directional derivative at $x=0$ for all $v \in \R$:
$$
\begin{aligned}
 \beta^\circ_{\psi_n}(0, v) & =\limsup _{\substack{y \rightarrow 0 \\
\eta \downarrow 0}} \;  \frac{\beta_{\psi_n}(y+\eta v)-\beta_{\psi_n}(y)}{\eta} \\
& =\limsup _{\substack{{y} \rightarrow 0 \\
\eta \downarrow 0}} \frac{1}{\eta} \left[\psi_n(0)-\psi_n\left( {y}+\eta v \right)+\psi_n'(\alpha_n^\pm) \left( {y}+\eta v \right)-\left(\psi_n(0)-\psi_n( {y})+\psi_n' (\alpha_n^\pm)  {y} \right) \right]\\
&= \limsup _{\substack{{y} \rightarrow 0 \\
\eta \downarrow 0}} -\frac{\psi_n\left( {y}+\eta v \right)-\psi_n( {y})}{\eta}+\psi_n'(\alpha_n ^\pm) v\\
& = (-\psi_n)^\circ(0,v)+\psi'_n(\alpha_n ^\pm) v 
.
\end{aligned}
$$
Since $(-\psi_n)$ is a smooth and concave function, it is evident that
$$
\beta^\circ_{\psi_n}(0, v) = \begin{cases}
     -\psi_n'(0) v+\psi'_n(\alpha_n ^-) v, & \text{if} \quad v \in \R_{\leq 0},\\
     -\psi_n'(0) v+\psi'_n(\alpha_n ^+) v, & \text{if} \quad v \in \R_{\geq 0}
     .
\end{cases}
$$
By combining the above equality with \eqref{eq:def_GG}, we  derive the following result:
$$
\begin{aligned}
  \xi \in   \partial \beta_{\psi_n} (0)& \Longleftrightarrow \begin{cases}
   -\psi_n '(0) v+\psi_n'(\alpha_n ^-) v  \geq v \xi, \quad \forall v \in \mathbb{R}_{\leq 0}
   \\
     -\psi_n'(0) v+\psi_n'(\alpha_n ^+) v  \geq v \xi, \quad \forall v \in \mathbb{R}_{\geq 0}
    \end{cases}
    \\
   & \Longleftrightarrow 
    \xi \in \left[ -\psi_n'(0)+\psi_n'(\alpha_n^-) ,   -\psi_n'(0)+\psi_n'(\alpha_n^+) \right]
    .
\end{aligned}
$$
We now set $\ell_n^\pm :=-\psi_n'(0)+\psi_n'(\alpha_n^\pm)$. Based on the previous discussions, we thus obtain:
\begin{equation}\label{eq:CP_beta}
\partial \beta_{\psi_n}(x)=\begin{cases}
    -\psi_n'(x)+\psi_n'\left(\alpha_n^-\right), &  \text{if} \quad x \in \left[\alpha_n^- , 0 \right),\\
    -\psi_n'(x)+\psi_n'\left(\alpha_n^+\right), &  \text{if} \quad x \in \left( 0, \alpha_n^+ \right],
    \\
    \left[\ell_n^-  , \ell_n^+  \right], & \text{if} \quad x=0
    ,
    \\
    0, & \text{otherwise}
    .
\end{cases}
\end{equation}

\subsection{Proof of Proposition~\ref{prop:critical_point_Jpsi}}\label{appendix:charachterization_critical_points}

Let us first focus on the unconstrained case, $\Cc = \R$. As the data term~$F_\y$ in~\eqref{eq:relaxation_Jpsi} is  differentiable, then, according to~\cite[Corollary 1]{Frank99}, the following holds:
\begin{equation}\label{eq:sup_critical_Jpsi}
    \forall \x \in \mathbb{R}^N, \quad \partial J_\Psi\left(\x \right) = \A^T \nabla F_\y(\A\x)+\lambda_2\x +\partial B_{\Psi} \left(\x\right)
    .
\end{equation}
Now, $\hat{\x}\in\mathbb{R}^N$ is a critical point of the functional $J_\Psi$ if and only if 
$$
    \bm{0} \in  \partial J_\Psi\left(\hat{\x} \right)
    .
$$
Using the expression \eqref{eq:sup_critical_Jpsi} we can write more precisely:
\begin{equation}
\bm{0} \in \{\A^T \nabla F_\y(\A\hat{\x})+\lambda_2 \hat{\x} \}+ \prod_{n \in [N]} \partial \beta_{\psi_n}(\hat{x}_n).
\end{equation}
By substituting \eqref{eq:CP_beta} into the previous inclusion, we  thus obtain:
\begin{align*}
 \forall n \in [N],\quad  & \; 
\begin{cases}
    0 \in \left[ \left<\a_n, \nabla F_\y \left( \A\hat{\x}\right)\right>+\ell_n^-, \left<\a_n, \nabla F_\y \left( \A\hat{\x}\right)\right>+\ell_n^+ \right], &  \text{if} \; \hat{x}_n = 0, \\
    \left<\a_n, \nabla F_\y \left( \A\hat{\x}\right)\right>+\lambda_2 \hat{x}_n-\psi'_n(\hat{x}_n)+\psi'_n \left( \alpha_n^- \right)=0, & \text{if} \; \hat{x}_n \in \left[ \alpha_n^-, 0 \right),\\
     \left<\a_n, \nabla F_\y \left( \A\hat{\x}\right)\right>+\lambda_2\hat{x}_n-\psi'_n(\hat{x}_n)+\psi'_n \left( \alpha_n^+ \right)=0, & \text{if} \; \hat{x}_n \in \left(0, \alpha_n^+ \right],\\
     \left<\a_n, \nabla F_\y \left( \A\hat{\x}\right)\right>+\lambda_2\hat{x}_n=0, & \text{if} \; \hat{x}_n \in \mathbb{R} \backslash \left[ \alpha_n^- , \alpha_n^+ \right]
     ,
\end{cases}
\end{align*}
which can be equivalently expressed as:
\begin{align*}
 \forall n \in [N], \quad &  \;
 \begin{cases}
-\left<\a_n, \nabla F_\y \left( \A\hat{\x}\right)\right> \in \left[\ell_n^- ,\ell_n^+  \right], &  \text{if} \; \hat{x}_n = 0 ,\\
\left<\a_n, \nabla F_\y \left( \A\hat{\x}\right)\right>+\lambda_2\hat{x}_n-\psi'_n(\hat{x}_n)+\psi'_n \left( \alpha_n^- \right)=0, & \text{if} \; \hat{x}_n \in \left[ \alpha_n^-, 0 \right),\\
     \left<\a_n, \nabla F_\y \left( \A\hat{\x}\right)\right>+\lambda_2\hat{x}_n-\psi'_n(\hat{x}_n)+\psi'_n \left( \alpha_n^+ \right)=0, & \text{if} \; \hat{x}_n \in \left(0, \alpha_n^+ \right],\\
     \left<\a_n, \nabla F_\y \left( \A\hat{\x}\right)\right>+\lambda_2\hat{x}_n=0, & \text{if} \; \hat{x}_n \in \mathbb{R} \backslash \left[ \alpha_n^- , \alpha_n^+ \right]
 .    
 \end{cases}
\end{align*}
Now, for the case $\Cc = \R_{\geq 0}$, we get the following optimality condition from~\cite[Theorem 8.15]{Rockwtes-optimalcond04}:
\begin{equation}
    \bm{0} \in \{\A^T \nabla F_\y(\A\hat{\x})+\lambda_2 \hat{\x} \}+ \prod_{n \in [N]} \partial \beta_{\psi_n}(\hat{x}_n) + \mathcal{N}_\Cc(\hat{x}_n)
    ,
\end{equation}
where $\mathcal{N}_\Cc$ is the normal cone of the set $\Cc = \R_{\geq 0}$ defined by 
$$
\mathcal{N}_{\Cc}(x) = \left\{ v \in \R \mid \langle v, z - x \rangle \leq 0, \ \forall z \in \Cc \right\}
.
$$
Note that one the constraint qualification condition required in~\cite[Theorem 8.15]{Rockwtes-optimalcond04} is satisfied in the present case.
Then, one can easily see that $\mathcal{N}_\Cc(0) = (-\infty, 0]$ and $\mathcal{N}_\Cc(x) = \{0\}$ if $x > 0$. Therefore, we conclude that:
\begin{align*}
 \forall n \in [N], \quad &  \;
 \begin{cases}
-\left<\a_n, \nabla F_\y \left( \A\hat{\x}\right)\right> \in \left(-\infty ,\ell_n^+  \right], &  \text{if} \; \hat{x}_n = 0 ,\\
     \left<\a_n, \nabla F_\y \left( \A\hat{\x}\right)\right>+\lambda_2\hat{x}_n-\psi'_n(\hat{x}_n)+\psi'_n \left( \alpha_n^+ \right)=0, & \text{if} \; \hat{x}_n \in \left(0, \alpha_n^+ \right],\\
     \left<\a_n, \nabla F_\y \left( \A\hat{\x}\right)\right>+\lambda_2\hat{x}_n=0, & \text{if} \; \hat{x}_n \in \Cc \backslash \left[0 , \alpha_n^+ \right]
 ,   
 \end{cases}
\end{align*}
which complete the proof.

\subsection{Proof of Theorem~\ref{th:exact_relax}}\label{appendix:exact_relax}
\paragraph{Proof that minimizers of $J_\Psi$ (global or not) are minimizers of $J_0$}
Let $\hat{\x} \in \Cc^N$ be a minimizer (global or not) of $J_\Psi$. Then, from condition \eqref{eq:general-exact-condition} 
we get that 
\begin{equation}\label{proof_min-1}
    \forall n \in [N], \quad \hat{x}_n =0 \; \text{ or } \; \hat{x}_n \notin (\alpha_n^-,\alpha_n^+).
\end{equation}
It thus follows from the expression of $B_\Psi$ (see Proposition~\ref{prop:Breg_l0}) that 
\begin{equation}\label{proof_min-1-1}
    J_\Psi(\hat{\x}) = J_0(\hat{\x}).
\end{equation}
Now assume that $\hat{\x}$ is not a minimizer of $J_0$. Then, for all neighborhood $\mathcal{V} \ni \hat{\x}$, there exists $\tilde{\x} \in \mathcal{V} \cap \Cc^N$ such that $J_0(\tilde{\x}) < J_0(\hat{\x})$.
Moreover, by definition of $B_\Psi$ (see Definition~\ref{def:Breg_L0}), we have that $B_\Psi \leq \|\cdot \|_0$. Thus we have 
\begin{equation}
    J_\Psi(\tilde{\x}) \leq J_0(\tilde{\x}) <  J_0(\hat{\x}) = J_\Psi(\hat{\x}),
\end{equation}
which contradicts the fact that $\hat{\x}$ is a minimizer (global or not) of $J_\Psi$.\\

This part proves~\eqref{eq:result_local_min} on local minimizers and one implication of the result on global minimizers~\eqref{eq:result_global_min}. It remains to prove the reciprocal for global minimizers.

\paragraph{Proof that global minimizers of $J_0$ are global minimizer of $J_\Psi$}

Let $\hat{\x} \in \Cc^N$ be a global minimizer of $J_0$. Assume that it is not a global minimizer of $J_\Psi$. Hence, there exists  a global minimizer $\tilde{\x} \in \Cc^N \backslash \{\hat{\x}\}$ (from Theorem~\ref{coro:existence_min_global_jpsi}) of $J_\Psi$ such that $J_\Psi(\tilde{\x})< J_\Psi(\hat{\x})$. 
\begin{equation}\label{eq:min-glb}
    J_0(\tilde{\x}) = J_\Psi(\tilde{\x}) < J_\Psi(\hat{\x}) \leq J_0(\hat{\x}),
\end{equation}
where the first equality in~\eqref{eq:min-glb} comes from~\eqref{proof_min-1} and~\eqref{proof_min-1-1},
while the last comes from the fact that  $B_\Psi \leq \change{\lambda
_0} \|\cdot \|_0$. 
Finally, \eqref{eq:min-glb} contradicts the fact that $\hat{\x}$ is a global minimizer of $J_0$ and completes the proof.

\subsection{Proof of Proposition~\ref{Local_minimizers_of_J0_preserved_by_JPsi}}\label{appendix:jpsi-removes-loc}

Let $\hat{\x} \in \Cc^N $ be a local minimizer of~$J_0$ and set $\hat{\sigma} =\sigma (\hat{\x})$. Then,  from Corollary~\ref{coro:criter_locmin_j0}, $\hat{\x}_{\hat{\sigma}}$  solves
    \begin{align}
         & \A_{\hat{\sigma}}^T
        \nabla F_\y\left(\A_{\hat{\sigma}}\hat{\x}_{\hat{\sigma}}  \right)+\lambda_2\hat{\x}_{\hat{\sigma}}  = \mathbf{0} \notag \\
          \Longleftrightarrow \;  & \forall n \in \hat{\sigma} , \; \left< \a_n , \nabla F_\y \left( \A \hat{\x}  \right) \right>+\lambda_2 \hat{x}_n = {0}. \label{eq:proof_removed_min_loc-0}
    \end{align}
We now proceed by proving both implications of the equivalence stated in the proposition.
\begin{itemize}
    \item[$\bullet$] [\eqref{eq:condition1}--\eqref{eq:condition2} $ \Rightarrow$ $\hat{\x}$ local minimizer of $J_\Psi$].
     Under~\eqref{eq:condition1} and~\eqref{eq:condition2}, we deduce from~\eqref{eq:proof_removed_min_loc-0} that
   \begin{align} \label{eq:proof_removed_min_loc-1}
      \begin{cases}
            \forall n \in \hat \sigma, \; \hat{x}_n \in \Cc \backslash  [\alpha_n^-,\alpha_n^+] \; \text{and} \, \left< \a_n , \nabla F_\y \left( \A \hat{\x}  \right) \right> +\lambda_2 \hat{x}_n = {0}, \\
            \forall n \in \hat{\sigma}^c, \;
            \hat{x}_n =0 \; \text{and} \; -\left<\a_n ,\nabla F_\y\left(\A\hat{\x} \right) \right> \in \left[ \ell_n^-,\ell_n^+  \right].
       \end{cases}
   \end{align}
   It then follows from Proposition~\ref{prop:critical_point_Jpsi} that $\hat{\x}$ is a critical point of $J_\Psi$. It now remains to prove that $\hat{\x}$ is actually a local minimizer of  $J_\Psi$. First, let us remark that from~\eqref{eq:condition1} we have, $\forall n \in \hat{\sigma}$, either $\hat{x}_n > \alpha_n^+$ or  $\hat{x}_n < \alpha_n^-$. Hence
   $$\rho_1 := \min \left( \min_{n \in \hat{\sigma}, \hat{x}_n >0} (\hat{x}_n - \alpha_n^+),  \min_{n \in \hat{\sigma}^c , \hat{x}_n <0} (\alpha_n^- 
 - \hat{x}_n ) \right) >0.$$ Setting $\rho := \min(\rho_1,\min_n (\alpha_n^+), \min_n (-\alpha_n^-))$ we obtain that, $\forall \bm{\varepsilon} \in \Binf(\bm{0},\rho)$,
 \begin{align}
     B_\Psi(\hat \x + \bm{\varepsilon}) & = \sum_{n \in \hat{\sigma}^c} \beta_{\psi_n}(\varepsilon_n) +  \sum_{n \in \hat{\sigma}}  \beta_{\psi_n}(\hat{x}_n) \notag \\
     & \geq  \sum_{\substack{n \in \hat{\sigma}^c\\ \varepsilon_n > 0}} \frac{\beta_{\psi_n}(\alpha_n^+)}{\alpha_n^+} \varepsilon_n  + \sum_{\substack{n \in \hat{\sigma}^c\\ \varepsilon_n < 0}} \frac{\beta_{\psi_n}(\alpha_n^-)}{\alpha_n^-} \varepsilon_n +   \sum_{n \in \hat{\sigma}}  \beta_{\psi_n}(\hat{x}_n)  \label{eq:proof_removed_min_loc-2}
 \end{align}
where the second inequality comes from the concavity of $\beta_{\psi_n}$ over the intervals $[\alpha_n^-,0]$ and $[0,\alpha_n^+]$. It follows that, for all $ \bm{\varepsilon} \in \Binf(\bm{0},\rho)$ such that $\sigma(\bm{\varepsilon}) \subseteq \hat{\sigma}$, $B_\Psi(\hat \x + \bm{\varepsilon}) = B_\Psi(\hat \x )$ and, as $\hat{\x}_{\hat{\sigma}}$ solves the subproblem~\eqref{eq:restriction_j0}, $J_\Psi(\hat \x + \bm{\varepsilon}) \geq J_\Psi(\hat \x)$ with a strict inequality if and only if $\lambda_2>0$ or $\mathrm{rank}(\A_{\hat{\sigma}}) = \sharp \hat{\sigma}$ (from Lemma~\ref{lem:strict-min}).

We now study the case where we take $\bm{\varepsilon} \in \Binf(\bm{0},\rho)$ such that $\sigma(\bm{\varepsilon}) \nsubseteq \hat{\sigma}$, i.e., there exists $n \in \hat{\sigma}^c$ such that $\varepsilon_n \neq 0$. From the convexity of $H_\y = F_\y(\A \cdot) + \frac{\lambda_2}{2}\|\cdot\|_2^2$, we get
\begin{equation}
    H_\y(\hat{\x} + \bm{\varepsilon}) \geq H_\y(\hat{\x}) + \left\langle \nabla H_\y(\hat{\x}), \bm{\varepsilon} \right\rangle
\end{equation}
where we recall that $\forall n \in [N]$, $[\nabla H_\y(\hat{\x})]_n = \left< \a_n , \nabla F_\y \left( \A \hat{\x}  \right) \right>+\lambda_2 \hat{x}_n $. From~\eqref{eq:proof_removed_min_loc-1}, we have $[\nabla H_\y(\hat{\x})]_n =0 \; \forall n \in \hat{\sigma}$ and $[\nabla H_\y(\hat{\x})]_n \geq - \ell_n^+ \; \forall n \in \hat{\sigma}^c$. We then deduce
\begin{equation}
    H_\y(\hat{\x} + \bm{\varepsilon}) \geq H_\y(\hat{\x}) - \sum_{n \in \hat{\sigma}^c} \ell_n^+ \varepsilon_n \geq H_\y(\hat{\x}) - \sum_{\substack{n \in \hat{\sigma}^c \\ \varepsilon_n >0}} \ell_n^+ \varepsilon_n \label{eq:proof_removed_min_loc-3}
\end{equation}
where the last inequality is obtained using the fact that $\ell_n^+ = -\psi_n'(0) + \psi_n'(\alpha_n^+) >0$ by strict convexity of $\psi_n$. Combining~\eqref{eq:proof_removed_min_loc-2} and~\eqref{eq:proof_removed_min_loc-3} we get
\begin{align}
    J_\Psi(\hat{\x} + \bm{\varepsilon}) & \geq J_\Psi(\hat{\x}) +  \sum_{\substack{n \in \hat{\sigma}^c\\ \varepsilon_n > 0}} \left( \frac{\beta_{\psi_n}(\alpha_n^+)}{\alpha_n^+} -\ell_n^+ \right) \varepsilon_n  + \sum_{\substack{n \in \hat{\sigma}^c\\ \varepsilon_n < 0}} \frac{\beta_{\psi_n}(\alpha_n^-)}{\alpha_n^-} \varepsilon_n \label{eq:proof_removed_min_loc-4} \\
   & \geq  J_\Psi(\hat{\x}) +  \sum_{\substack{n \in \hat{\sigma}^c\\ \varepsilon_n > 0}} \left( \frac{\beta_{\psi_n}(\alpha_n^+)}{\alpha_n^+} -\ell_n^+ \right) \varepsilon_n   
\end{align}
 because ${\beta_{\psi_n}(\alpha_n^-)}/{\alpha_n^-} <0$ and thus $({\beta_{\psi_n}(\alpha_n^-)}\varepsilon_n)/{\alpha_n^-} >0$ for $\varepsilon_n <0$. Finally, from the expressions of $\beta_{\psi_n}$ and $\ell_n^+$ we get
 \begin{align*}
      \frac{\beta_{\psi_n}(\alpha_n^+)}{\alpha_n^+} -\ell_n^+& = \frac{1}{\alpha_n^+}(\psi_n(0) - \psi_n(\alpha_n^+) + \psi_n'(\alpha_n^+)\alpha_n^+) + \psi_n'(0) - \psi_n'(\alpha_n^+) \\
      & = \frac{1}{\alpha_n^+}(\psi_n(0) - \psi_n(\alpha_n^+)) + \psi_n'(0) > 0
 \end{align*}
due, again, to the strict convexity of $\psi_n$. Hence, we have shown that $\forall \bm{\varepsilon} \in \Binf(\bm{0},\rho)$ such that $\sigma(\bm{\varepsilon}) \nsubseteq \hat{\sigma}$, $J_\Psi(\hat{\x} + \bm{\varepsilon}) > J_\Psi(\hat{\x})$. Note that if $\forall n \in \hat{\sigma}^c$ we have $\varepsilon_n \leq 0$, then the strict inequality is due to the sums over $\{n \in \hat{\sigma}^c : \varepsilon_n <0\}$ in~\eqref{eq:proof_removed_min_loc-3} and~\eqref{eq:proof_removed_min_loc-4}.

Gathering the two cases, we have proved that  $\forall \bm{\varepsilon} \in \Binf(\bm{0},\rho)$,  $J_\Psi(\hat{\x} + \bm{\varepsilon}) \geq  J_\Psi(\hat{\x})$ with a strict inequality if and only if $\lambda_2>0$ or $\mathrm{rank}(\A_{\hat{\sigma}}) = \sharp \hat{\sigma}$. This completes the proof of the first implication.

\item[$\bullet$] [$\hat{\x}$ local minimizer of $J_\Psi$  $ \Rightarrow$ \eqref{eq:condition1}--\eqref{eq:condition2}]. Given that $\hat{\x}$ is a local minimizer of $J_\Psi$, it is a critical point and satisfies the equations of Proposition~\ref{prop:critical_point_Jpsi}. Hence~\eqref{eq:condition2} is trivially satisfied while~\eqref{eq:condition1} is deduced  from~\eqref{eq:proof_removed_min_loc-0}. 
\end{itemize}

\section{Computation of \BR{} of some generating functions}\label{appendix:compute-brex}
\subsection{Power functions}
We consider $\psi(x)=\frac{\gamma}{p(p-1)} |x|^p$ defined on $\Cc=\R$ with $\gamma >0$ and $p >1$. This function is strictly convex on   $\mathbb{R}$ with first derivative of 
 $\psi'(x)=\frac{\gamma}{p-1} \operatorname{sign}(x) |x|^{p-1}$. The Bregman distance induced at point $(0,x)$ by such $\psi$  is
 \begin{equation}
     d_{\psi}(0,x)=\frac{\gamma}{p} |x|^p.
 \end{equation}
The $\lambda_0$-sublevel set of $d_{\psi}(0,\cdot)$ is thus given by:
\begin{equation}
S_{\lambda_0} = \left\lbrace x \in \mathbb{R}  \mid \frac{\gamma}{p}|x|^p  \leq \lambda_0  \right\rbrace =\left[-\left( \frac{p\lambda_0}{\gamma} \right)^{\frac{1}{p}} , \left( \frac{p\lambda_0}{\gamma} \right)^{\frac{1}{p}}  \right]
\end{equation}
Using  Proposition \ref{prop:Breg_l0},
we can compute the $\ell_0 $ Bregman relaxation $\beta_{\psi}$ for this choice of $\psi$. The resulting expression is given by:
\begin{equation}\label{eq:l_0_Bregman_LS}
 \beta_{\psi}(x) = \left\lbrace 
    \begin{array}{ll}
         \frac{-\gamma}{p(p-1)}|x|^p-\frac{\gamma}{p-1}  \left( \frac{p\lambda_0}{\gamma} \right)^{\frac{p-1}{p}}  x, & \quad\text{ if } x \in [- \left( \frac{p\lambda_0}{\gamma} \right)^{\frac{1}{p}} ,0], \\
           \frac{-\gamma}{p(p-1)}|x|^p +  \frac{\gamma}{p-1} \left( \frac{p\lambda}{\gamma} \right)^{\frac{p-1}{p}} x,   &\quad \text{ if } x \in [0, \left( \frac{p\lambda_0}{\gamma} \right)^{\frac{1}{p}} ], \\ 
      \lambda_0,   &\quad  \text{ otherwise}.
    \end{array}
    \right.
\end{equation}
\subsection{Shannon entropy}
We now consider the function defined on $\Cc=\R_{\geq 0}$ by
\begin{equation}\label{eq:shannon}
    \psi(x)= \begin{cases}
        \gamma \left( x\log(x)-x+1 \right),\quad  & \text{if } \; x \in \left(0, + \infty \right),\quad  \\
        \gamma,\quad & \text{if } \; x=0.
    \end{cases}
\end{equation}
where $\gamma>0$. The function $\psi$ is strictly convex, l.s.c and continuously differentiable on $(0, +\infty)$. Therefore, $\psi'$ exists on $(0, +\infty)$ and we have $\psi'(x)=\gamma\log(x)$. 
By definition of Bregman distance, we have $d_{\psi}(0,x)=\gamma x $. The $\lambda_0$-sublevel set of $d_{\psi}(0,\cdot) $ is thus given by $\left[ 0,\frac{\lambda_0}{\gamma} \right] $. Applying Proposition \ref{prop:Breg_l0}, we obtain the following expression for the $\ell_0$ Bregman relaxation $\beta_{\psi}$ associated with this choice of $\psi$
\begin{equation}~\label{eq:Bpsi_kull_leibler}
\beta_{\psi}(x) = \left\lbrace
\begin{array}{ll}
\gamma x \left(\log(\frac{\lambda_0}{\gamma})-\log(x)+1 \right),  &\quad \text{if } x \in \left[ 0,\frac{\lambda_0}{\gamma} \right],\\
\lambda_0, &\quad \text{otherwise.}
\end{array}
\right.
\end{equation}

\subsection{Kullback-Leibler divergence}
For $\gamma >0$ and $c,y>0$, we choose the generator function defined on $\Cc=\R_{\geq 0}$ as follows 
\begin{equation}\label{eq:kl-generating-a}
\psi(x)=\gamma \text{kl}(cx+b,y)=\gamma \left(cx+b-y\log(cx+b) \right)
\end{equation}
which is strictly convex, l.s.c and continuously differentiable on $\left( 0 , +\infty \right) $. We have for all $x \in \left( 0 , +\infty \right)  $
$$
\psi'(x)=\gamma c \left(1-\frac{y}{cx+b} \right)
.
$$
By definition of the  Bregman distance, we get
\begin{align*}
d_{\psi}(0,x)&=\gamma y \left( b-\log(b)-cx-b+\log(cx+b)+cx-\frac{cx}{cx+b} \right)\\
&=\gamma y \left[\log\left(\frac{cx+b}{b}\right)-\frac{cx}{cx+b} \right]\\
\end{align*}
Since we are looking for the values of $x$ that satisfy the following inequality  
\begin{equation}\label{eq:inequaliti_kl_similair-choice}
    d_{\psi}(0,x) \leq \lambda_0 
    .
\end{equation}
we look at the corresponding equation:
\begin{align*}
d_{\psi}(0,x)&= \lambda_0 \Rightarrow \log\left(\frac{cx+b}{b}\right)-\frac{cx}{cx+b} =\frac{\lambda_0}{\gamma y} \\
&\Rightarrow  \log(z)-\frac{z-b}{z}=\frac{\lambda_0}{\gamma y}+\log(b)\\
&\Rightarrow \log(z)+\frac{b}{z}=\frac{\lambda_0}{\gamma y}+\log(b)+1:=\kappa \\
&\Rightarrow ze^{\frac{b}{z}}=e^{\kappa} 
\Rightarrow \frac{-b}{z}e^{\frac{-b}{z}}=-be^{-\kappa} \Rightarrow cx^*+b=z^{*}=\frac{-b}{W(-be^{-\kappa})}
\end{align*}
where $W(\cdot)$ is the Lambert function, and the last equality come from the fact that $-be^{-\kappa} \geq -e^{-1}$, which is equivalent to $\kappa=\frac{\lambda_0}{\gamma y}+\log(b)+1 \geq \log(b)+1$ that always holds since $\frac{\lambda_0}{\gamma y} >0$. Notice that $-be^{-\kappa} \geq -e^{-1}$ is required for the Lambert function to be defined. We deduce that the inequality~\eqref{eq:inequaliti_kl_similair-choice} is verified as soon as $x \geq \frac{-1}{c}(\frac{b}{W(-be^{-\kappa})}+b)$. 
Therefore the $\lambda_0$-sublevel set is $[0,\alpha^+]$ where $\alpha^+=x^* = \frac{-1}{c}(\frac{b}{W(-be^{-\kappa})}+b)$.
Using  Proposition \ref{prop:Breg_l0}. The resulting expression of  the $\ell_0 $ Bergman relaxation  $\beta_{\psi}$ for this choice of $\psi$ is given by:
\begin{equation}\label{eq:bergman_envolpe_Kullback}
\beta_{\psi}(x) = \left\lbrace 
    \begin{array}{ll}
       \gamma y \left[
       \log \left ( \frac{cx+b}{b} \right )+\frac{W(-be^{-\kappa})}{b} cx \right],  & \quad\text{ if } x \in [0 , \alpha^+], \\
      \lambda_0,   &  \quad\text{ if } x \geq \alpha^+.
    \end{array}
    \right.
\end{equation}
\section{Computing the proximal operator}

\subsection{Proof of Proposition~\ref{prop:prox_formula}}\label{appendix:bregman_prox}

 Let $n \in [N]$ and $x \in \Cc$. The proximal operator of $\rho \beta_{\psi_n}$  
     is given by
    \begin{equation} \label{eq:prox1D_proof}
       \prox_{\rho\beta_{\psi_n}}(x) = \underset{u \in \R}{\argmin} \left\lbrace \beta_{\psi_n}(u)+\frac{1}{2\rho}(u-x)^2
     \right\rbrace
     .
    \end{equation}
Using the first-order conditions where the formula for $\partial\beta_{\psi_n}$ is given in~\eqref{eq:CP_beta}, the possible solutions are $0$, $x$ and $u^*$ that solves $u-\rho\psi_n'(u)=x-\rho\psi_n'(\alpha_n^\pm)$. Hence, defining the sets $S_x=\{u \in \R : u-\rho\psi_n'(u)=x-\rho\psi_n'(\alpha_n^\pm) \}$ and $\mathcal{U}(x)=\{ 0,x\} \cup S_x$, we get that 

\begin{equation}
       \prox_{\rho\beta_{\psi_n}}(x) = \underset{u \in \mathcal{U}(x)}{\argmin} \left\lbrace \beta_{\psi_n}(u)+\frac{1}{2\rho}(u-x)^2
     \right\rbrace
     .
    \end{equation}
Which completes the proof.

\subsection{Explicit computation of $\prox_{\rho\beta_{\psi_n}}$ for some generating functions $\psi_n$}\label{appendix:expl-prox}

In this section, we present the details of the computations of the set $S_x$, where $x\in \Cc$, defined in Proposition~\ref{prop:prox_formula}. This set is defined as the solutions of the equation $u-\rho\psi_n'(u)=x$, with $\rho>0$ and $\psi_n$ given in Table~\ref{tab:generating-functions}.
\subsubsection{Power functions}
Let $n \in [N]$, $\rho>0$ and $p \in (1,2]$. Let $q=p-1$.
We assume without loss of generality that $u \geq 0$. The computations remains similar for $u<0$. We look the solutions of the following equation 
\begin{equation}
u-\frac{\rho\gamma_n}{q} u^q=x-\rho\psi_n'(\alpha_n^+).
\end{equation}
Let $z:=u^q$. We thus get: 
\begin{equation}\label{eq:p-power-equations}
z^{\frac{1}{q}}-\frac{\rho\gamma_n}{q} z=x-\rho\psi_n'(\alpha_n^+).
\end{equation}
Let $p=2$. For $\rho \gamma_n < 1$ we have $
u=z=\frac{x-\rho\psi_n'(\alpha_n^+)}{1-\rho\gamma_n}
$.

Now, let $p=3/2$. We get the following equation 
\[
z^2-2\rho\gamma z= x-\rho\psi_n'(\alpha_n^+),
\]
which admits two real solutions for $\Delta=(\rho\gamma)^2+x-\rho\psi_n'(\alpha_n^+) \geq 0$, which are given by
\[
z=\rho\gamma\pm \sqrt{(\rho\gamma)^2+(x-\rho\psi_n'(\alpha_n^+))}
,
\]
We thus deduce that $u=x-\rho\psi_n'(\alpha_n^+)+2(\rho\gamma)^2\pm 2\rho\gamma  \sqrt{(\rho\gamma)^2+x-\rho\psi_n'(\alpha_n^+)}$. 

For $p=4/3$, we obtain the third equation 
\begin{equation}\label{eq:3-eme-degrees}
z^3=3\rho\gamma_n z+x-\rho\psi_n'(\alpha_n^+),
\end{equation}
whose real solutions depend on the sign of $\Delta=(x-\rho\psi_n'(\alpha_n^+))^2-4 (\rho\gamma)^3$. We note 
$$
A=\sqrt[3]{\frac{(x-\rho\psi_n'(\alpha_n^+))}{2}+\frac{1}{2}\sqrt{\Delta}},
$$
and 
$$
B=\sqrt[3]{\frac{(x-\rho\psi_n'(\alpha_n^+))}{2}-\frac{1}{2}\sqrt{\Delta}}.
$$
The solutions of the equation~\eqref{eq:3-eme-degrees} are of the from $z_1 = A+B$, $z_2 = \omega A + \omega^2 B$, and $z_3 = \omega^2 A + \omega B$, where $\omega=-\frac{1}{2}+i\frac{\sqrt{3}}{2}$. 

\subsubsection{Shannon entropy}
Let $n \in [N]$ and $\rho>0$. We have 
\begin{align*}
    u-\rho\gamma_n \log(u)=x-\rho\psi_n'(\alpha^+_n) &\Rightarrow \frac{u}{\rho\gamma_n}-\log(u)=\frac{x-\rho\psi_n'(\alpha^+_n) }{\rho\gamma_n} \\
   &\Rightarrow s-\log(s)=\frac{x-\rho\psi_n'(\alpha^+_n) }{\rho \gamma_n}+\log(\rho\gamma_n) 
    \Rightarrow \frac{1}{s} e^s= {\rho\gamma_n} e^{\frac{x-\rho\psi_n'(\alpha^+_n) }{\rho \gamma_n}} \\
    &\Rightarrow -se^{-s}=-\frac{1}{\rho \gamma_n}  e^{-\frac{x-\rho\psi_n'(\alpha^+_n) }{\rho \gamma_n}} \Rightarrow -s=W_{k}\left(-\frac{1}{\rho \gamma_n} e^{-\frac{x-\rho\psi_n'(\alpha^+_n) }{\rho \gamma_n}}\right)
    \\
    &\Rightarrow u=
    -\rho\gamma_n W_{k}\left(-\frac{1}{\rho \gamma_n}   e^{-\frac{x-\rho\psi_n'(\alpha^+_n) }{\rho \gamma_n}}\right)
    .
\end{align*}
where $s=\frac{u}{\rho\gamma_n}$. When dealing with real solutions, we have $k=0$ or $k=1$ the principal and the negative branches of the Lambert function, respectively for $0<e^{-\frac{x-\rho\psi_n'(\alpha^+) }{\rho \gamma_n}} \leq \rho \gamma_n e^{-1}$.
\subsubsection{KL divergence}
Let $n\in [N]$ and $\rho>0$. We need to solve:
\begin{align*}
       u-\rho\gamma_n+\frac{y\rho\gamma_n}{u+b}&=x-\rho\psi_n'(\alpha_n^+) \\
        &\Rightarrow u(u+b)-\rho\gamma_n(u+b)-(x-\rho\psi_n'(\alpha_n^+))(u+b)+y\rho\gamma_n=0 \\
        &\Rightarrow u^2+(b-\rho\gamma_n-x+\rho\psi_n'(\alpha_n^+))u-\rho\gamma_n b-b(x-\rho\psi_n'(\alpha_n^+))+y\rho\gamma_n=0.
    \end{align*}
    We have 
    \begin{align*}
        \Delta(x)&=(x-\rho\psi_n'(\alpha_n^+)+\rho\gamma_n-b)^2+4(bx-b\rho\psi_n'(\alpha_n^+)+\rho\gamma_n b-y\rho\gamma_n)
                 ,
    \end{align*}
whence the claim follows.

\end{appendices}

\bibliography{sn-bibliography}

\end{document}